\NeedsTeXFormat{LaTeX2e} 
 
\documentclass[envcountsame,a4paper,10pt]{llncs}
\usepackage{latexsym,amsmath} 
\usepackage{times}
\usepackage{psfrag}
 
\usepackage[dvips]{epsfig} 
\graphicspath{{./Fig_eps/}{./Eps/}} 
\DeclareGraphicsExtensions{.ps,.eps} 
\usepackage{color} 
\usepackage{a4wide}

\newcommand\rk{r_{k\mathrm{-SAT}}}
\newcommand\sign{\mathrm{sign}}
\newcommand\id{\mathrm{id}}
\newcommand\BP{\mathrm{BP}} 
\newcommand\BPGD{Belief Propagation Guided Decimation} 
\newcommand\SPGD{Survey Propagation Guided Decimation}

\newcommand\PHI{\vec\Phi} 
\newcommand\PHIbin{\vec\Phi_{\mathrm{bin}}} 
\newcommand\PHIseq{\vec\Phi_{\mathrm{seq}}}

\newcommand\cutnorm[1]{\left\|{#1}\right\|_{\qed}} 
 
\newcommand\cB{\mathcal{B}} 
\newcommand\cC{\mathcal{C}} 
 
\newcommand\cF{\mathcal{F}} 
 
\newcommand\cE{\mathcal{E}} 
\newcommand\cU{\mathcal{U}} 
\newcommand\cN{\mathcal{N}} 
\newcommand\cQ{\mathcal{Q}} 
 
\newcommand\cS{\mathcal{S}} 
\newcommand\cT{\mathcal{T}}

\newcommand\cL{\mathcal{L}} 
\newcommand\cM{\mathcal{M}} 
\newcommand\cP{\mathcal{P}}

\newcommand\cV{\mathcal{V}} 
 
\newcommand\cZ{\mathcal{Z}}

\def\cC{{\mathcal C}}
\def\cE{{\cal E}}

\newcommand\eul{\mathrm{e}} 
\newcommand\eps{\varepsilon}

\newcommand\Erw{\mathrm{E}} 
\newcommand\pr{\mathrm{P}}

\newcommand{\vecone}{\vec{1}}

\newcommand{\Bin}{{\rm Bin}}

\newcommand{\bink}[2] {{{#1}\choose {#2}}}

\newcommand\ra{\rightarrow} 

\newcommand\bc[1]{\left({#1}\right)} 
\newcommand\cbc[1]{\left\{{#1}\right\}} 
\newcommand\bcfr[2]{\bc{\frac{#1}{#2}}} 
\newcommand{\bck}[1]{\left\langle{#1}\right\rangle} 
\newcommand\brk[1]{\left\lbrack{#1}\right\rbrack} 
\newcommand\scal[2]{\bck{{#1},{#2}}} 
\newcommand\norm[1]{\left\|{#1}\right\|} 
\newcommand\abs[1]{\left|{#1}\right|}

\newcommand\RR{\mathbf{R}}

\newcommand{\whp}{w.h.p.} 
\newcommand{\stacksign}[2]{{\stackrel{\mbox{\scriptsize #1}}{#2}}}

\newcommand\Lem{Lemma}
\newcommand\Prop{Proposition}
\newcommand\Thm{Theorem}
\newcommand\Cor{Corollary}
\newcommand\Sec{Section}

\newcommand\algstyle{\small\sffamily}


\begin{document} 
\spnewtheorem{algorithm}[theorem]{Algorithm}{\bfseries}{}
\spnewtheorem{experiment}[theorem]{Experiment}{\bfseries}{}
\spnewtheorem{fact}[theorem]{Fact}{\bfseries}{\itshape}
\spnewtheorem{hypothesis}[theorem]{Hypothesis}{\bfseries}{\itshape}

\title{On Belief Propagation Guided Decimation 
	 for Random $k$-SAT
	}
 
\author{
Amin Coja-Oghlan\thanks{Supported by EPSRC grant EP/G039070/2.
	An extended abstract version of this work appeared in the proceedings of the ACM-SIAM Symposium on Discrete Algorithms (`SODA') 2011.}
} 
\date{\today} 

\institute{Goethe University, Mathematics Institute, 10 Robert Mayer St, Frankfurt 60486, Germany\\
	\email{acoghlan@math.uni-frankfurt.de}} 

\maketitle 

\begin{abstract}
Let $\PHI$ be a uniformly distributed random $k$-SAT formula with $n$ variables and $m$ clauses.
Non-construc\-tive arguments show that $\PHI$ is satisfiable for clause/variable ratios $m/n\leq\rk\sim2^k\ln2$  with high probability.
Yet no efficient algorithm is know to find a satisfying assignment
beyond $m/n\sim 2^k\ln(k)/k$  with a non-vanishing probability.
On the basis of deep but non-rigorous statistical mechanics ideas,
	a message passing algorithm called {\em\BPGD}
has been put forward
	(M\'ezard, Parisi, Zecchina: Science 2002; Braunstein, M\'ezard, Zecchina: Random Struc.\ Alg.~2005).
Experiments suggested that the algorithm might succeed for densities very close to $\rk$
	for $k=3,4,5$ (Kroc, Sabharwal, Selman: SAC 2009).
Furnishing the first rigorous analysis of this algorithm on a non-trivial input distribution,
in the present paper we show that \BPGD\
fails to solve random $k$-SAT formulas already for $m/n=O(2^k/k)$,
almost a factor of $k$ below the satisfiability threshold $\rk$.
Indeed, the proof refutes  a key hypothesis on which \BPGD\ hinges for such $m/n$.
\end{abstract}

\section{Introduction and results}\label{Sec_Intro}

Let $k\geq3$ and $n>1$ be integers, let $r>0$ be a fixed real number (independent of $n$),
and set $m=\lceil rn\rceil$.
Let $\PHI=\PHI_k(n,m)$ be a propositional formula obtained
by choosing a set of $m$ clauses of length $k$ over the variables $x_1,\ldots,x_n$ uniformly at random
such that no variable occurs in the same clause more than once
	(either positively or negatively).
For $k,r$ fixed we say that $\PHI$ has some property $\cP$ \emph{with high probability} (`\whp') if
$\lim_{n\ra\infty}\pr\brk{\PHI\in\cP}=1$.

\subsection{Background and motivation}

Since the 1990s the random formula $\PHI$ has gained a reputation as an extremely challenging benchmark for SAT solving.
More precisely, early computer experiments led to two key hypotheses~\cite{Cheeseman,KirkpatrickSelman,MitchellSelmanLevesque}.
First, that there is a {\em sharp threshold} for satisfiability.
That is, for any clause length $k$ there is a threshold value $\rk>0$ such that 
the random formula $\PHI$ is satisfiable \whp\
if $r<\rk$, while $\PHI$ is unsatisfiable \whp\ if $r>\rk$.
Second, that standard SAT-solvers such as DPLL-based algorithms require an exponential time to find a satisfying assignment
for densities $r$ `close' to $\rk$.
Thus, while these algorithms are highly efficient on ``real-world'' SAT instances, the simplest conceivable model
of random formulas eludes them.
These two hypotheses have inspired a considerable amount of research over the years, both experimental and theoretical~\cite{AchHandbook}.
Moreover, similar phenomena have been hypothesised in many other random problems~\cite{ANP}.

While the precise values (and even the existence) of the $k$-SAT threshold remain unknown for any $k\geq3$, 
asymptotically tight upper and lower bounds have been established.
Indeed, non-constructive arguments show that $\PHI$ is satisfiable \whp\ if $r<2^k\ln2-\frac32\ln2-\eps_k$,
while $\PHI$ is unsatisfiable \whp\ if $r>2^k\ln2-\frac12(1+\ln2)+\eps_k$,
with $\eps_k\ra0$ for large $k$~\cite{yuval,Kosta,KKKS}.
Thus, the transition from satisfiable to unsatisfiable takes place at about $\rk\sim 2^k\ln2$.

With respect to the computational problem, in spite of two decades of extensive research in the CS community
no algorithm seemed capable of finding a satisfying assignment for densities $r$ anywhere close to $\rk$ in polynomial time 
with a non-vanishing probability. 
More precisely, the best rigorously analysed polynomial time algorithm, designed specifically to
``beat'' random formulas, is known to succeed for $r<(1-\eps_k)2^k\ln(k)/k$ \whp,
	and seems to fail beyond~\cite{BetterAlg}.
Furthermore, a plethora of algorithms are known to fail for asymptotically even smaller densities $r=\rho\cdot 2^k/k$
with $\rho>0$ an absolute constant (independent of $k$). 
Examples include simple linear-time algorithms such as Unit Clause
($\rho=\eul/2$)~\cite{ChaoFranco2} or Shortest Clause ($\rho=1.817$)~\cite{FrSu},
as well as a wide range of DPLL-type algorithms ($\rho=11/4$)~\cite{AchBeameMolloy}.
In summary, there remained a factor of about $k/\ln k$ between the satisfiability threshold and the density where
algorithms are known to find satisfying assignments efficiently.

Against this gloomy background, it came as a considerable surprise when experiments indicated that  certain highly efficient
\emph{message passing algorithms}
come 
within a whisker of the conjectured satisfiability threshold~\cite{BMZ,GomesSelman,Kroc,MPZ}. 
These algorithms, called \emph{\BPGD}
and \emph{\SPGD}, were put forward 
on the basis of the ``cavity method'', a very insightful but non-rigorous technique from statistical mechanics~\cite{BMZ,pnas}.%
	\footnote{The message passing procedure upon which \BPGD\ is based
		has been rediscovered several times in the context of different applications, see \Sec~\ref{Sec_related} for details.
		In the physics literature it was originally known under the name ``Bethe-Peierls approximation''.
		By contrast, the message passing technique that underpins Survey Propagation seems to be new.}
Conceptually, Belief/Survey Propagation Guided Decimation are 
more sophisticated than the previously studied algorithms 
by an order of magnitude;
we will give a detailed account in \Sec~\ref{Sec_BPdec}.
As a consequence, the techniques that were developed to analyze previous algorithms fail dramatically for Belief/Survey Propagation.

The performance of the new message passing algorithms can be exemplified nicely in the case $k=4$.
The conjectured threshold for the existence of satisfying assignments is $r_{4-\mathrm{SAT}}\approx 9.93$~\cite{Mertens}.
According to experiments from~\cite{Kroc}, Survey Propagation guided decimation finds satisfying assignments efficiently
for densities up to $r=9.73$.
Experiments from~\cite{RTS} suggest that the ``vanilla'' version of \BPGD\
 succeeds up to $r=9.05$.
With a certain tweak
(the ``most biased variable'' decimation rule)
\BPGD\ succeeds up to $r=9.24$~\cite{Kroc}.
By comparison, the
 best ``classical''
algorithm  {\tt SCB} from~\cite{FrSu}
finds satisfying assignments in polynomial time merely up to $r=5.54$,
while {\tt zChaff}, an industrial SAT solver, is effective up to $r=5.35$~\cite{Kroc}.

\subsection{Unsatisfied with physics}

Ever since these stunning experimental results were reported,
coming up with a rigorous analysis of the new message passing algorithms has been
one of the key challenges in the area of random constraint satisfaction problems (cf.~\cite{ANP}).
The present paper contributes the first such analysis.
More specifically, we study the ``vanilla'' version of \BPGD\ (`{\tt BPdec}'),
the simplest but arguably most natural version.
We establish a {\em negative} result: 
{\tt BPdec}
fails to find a satisfying assignment \whp\ for densities $r>\rho\cdot 2^k/k$ for a certain absolute constant $\rho>0$.
In other words, we prove that, perhaps surprisingly, {\tt BPdec} does {\em not} outperform simpler combinatorial algorithms such as the one from~\cite{BetterAlg}
	asymptotically.

Stating the result precisely requires a little care, because it involves two levels of randomness:
the choice of the random formula
$\PHI$, and the `coin tosses' of the randomized algorithm {\tt BPdec}.
For a (fixed, non-random) $k$-CNF $\Phi$
let $\mathrm{success}(\Phi)$ denote the probability that {\tt BPdec}$(\Phi)$ outputs a satisfying assignment.
Here, of course, `probability' refers to the coin tosses of the algorithm only.
Then, if we apply {\tt BPdec} to the {\em random} $k$-CNF $\PHI$, 
the success probability $\mathrm{success}(\PHI)$  becomes a random variable.
Recall that $\PHI$ is unsatisfiable for $r>2^k\ln2$ \whp

\begin{theorem}\label{Thm_main}
There is a constant $\rho_0>0$ such that for any $k,r$ satisfying
	\begin{equation}\label{eqmain}
	\rho_0\cdot2^k/k\leq r\leq2^k\ln2
	\end{equation}
we have $\mathrm{success}(\PHI)\leq\exp(-\Omega(n))$ \whp
\end{theorem}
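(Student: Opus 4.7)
My plan is to use the sign-flip symmetry of $\PHI$ to reduce the theorem to a bound on \texttt{BPdec}'s behaviour in a planted variant of the random formula, followed by a first-moment / Markov argument.

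\textbf{Symmetry reduction.} The distribution of $\PHI$ is invariant under flipping the signs of any subset of variables, and \texttt{BPdec} is sign-equivariant: both the BP iteration and the decimation rule respect the negation of literals. Consequently, for every $\sigma\in\{0,1\}^n$ the joint probability $\Pr[\texttt{BPdec}(\PHI)=\sigma,\,\sigma\models\PHI]$ is the same as for $\sigma=\vec 1$ (apply the flip sending $\sigma$ to $\vec 1$), so summing and conditioning on $\vec 1\models\PHI$ yields
\[
\Erw\!\bigl[\mathrm{success}(\PHI)\bigr]=2^n\,(1-2^{-k})^m\cdot p,\qquad p:=\Pr_{\PHI'}\!\bigl[\texttt{BPdec}(\PHI')=\vec 1\bigr],
\]
where $\PHI'$ is $\PHI$ conditioned on $\vec 1\models\PHI$ (the ``planted'' distribution). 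Since $(1-2^{-k})^m\le\exp(-rn/2^k)=\exp\bigl(-\rho_0 n/k\cdot(1+o(1))\bigr)$, any bound of the form $p\le 2^{-n}\exp(o(n/k))$ would give $\Erw[\mathrm{success}(\PHI)]\le\exp(-\Omega(n))$; Markov's inequality then delivers $\mathrm{success}(\PHI)\le\exp(-\Omega(n))$ w.h.p., as required.

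\textbf{Analysis of \texttt{BPdec} on the planted formula.} Under $\PHI'$ each clause is drawn uniformly among $k$-clauses with at least one positive literal, so each individual literal is positive with probability $\tfrac12+\Theta(2^{-k})$, a small per-literal bias. The main claim is that for $r\ge\rho_0 2^k/k$ this bias is too weak to move BP away from its symmetric / paramagnetic fixed point: initialised symmetrically, BP on $\PHI'$ converges to a fixed point with marginals $\tfrac12+O(2^{-k})$. Each decimation step is then essentially a fair coin flip, and the probability of outputting the specific string $\vec 1$ is at most $(\tfrac12+O(2^{-k}))^n=2^{-n}\exp\bigl(O(n\cdot 2^{-k})\bigr)=2^{-n}\exp(o(n/k))$, matching the reduction's target for $p$. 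I would establish this by a contiguity-type comparison between $\PHI'$ and the null model $\PHI$, combined with a stability analysis of the BP equations on the sparse factor graph, showing that the paramagnetic fixed point is the unique attractive one throughout the range $\rho_0 2^k/k\le r\le 2^k\ln 2$.

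\textbf{Main obstacle.} The crux is maintaining control of the BP marginals jointly with the decimation. After $t$ decimation steps the residual factor graph contains clauses of lengths $k,k-1,\ldots,1$ produced by substituting literals of the partial assignment, and its distribution is further conditioned on the history of BPdec's coin tosses. One must inductively describe the correct residual planted ensemble on the remaining $n-t$ variables, verify that BP on it still converges to a near-symmetric fixed point, and prove that the per-step bias stays well below $1/k$. Ruling out exponential amplification of the $O(2^{-k})$ per-step bias over $n$ steps, and controlling the drift of the ensemble through decimation, appear to be the main technical challenges.
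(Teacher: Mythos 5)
Your symmetry reduction to the planted ensemble is a genuinely different framing from the paper's, and it does correctly reduce the task to a bound on $p=\Pr_{\PHI'}[\texttt{BPdec}(\PHI')=\vec 1]$. The paper instead works with the \emph{unconditioned} formula and a first-moment union bound over all $(\pi,\sigma)$ pairs (Theorem~\ref{Thm_bias}, Corollary~\ref{Cor_bias}), comparing the algorithm's output distribution $\bar\beta_\Phi$ to the uniform measure on $\{0,1\}^V$ (Proposition~\ref{Prop_uniform}). These are distinct routes, and yours is a legitimate alternative skeleton.

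The gap is in the planted-model analysis. After decimating $t$ variables (all set to true) under $\PHI'$, a surviving clause of residual length $\ell=|N(b)|$ is a uniformly random length-$\ell$ clause \emph{conditioned to contain at least one positive literal}. Each of its literals is therefore positive with probability $\tfrac12\cdot(1-2^{-\ell})^{-1}=\tfrac12+\Theta(2^{-\ell})$, and the typical residual length is $\ell\approx\theta k$ with $\theta=1-t/n$, which shrinks to $\theta k\approx\rho$ by the stopping time $T=(1-r/2^k)n$. So the per-literal bias is $\Theta(2^{-\theta k})$, not the $O(2^{-k})$ you wrote, and your claimed per-step estimate $\bigl(\tfrac12+O(2^{-k})\bigr)^n=2^{-n}\exp(o(n/k))$ is not correct. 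A careful integration of the bias $\Theta(2^{-\theta k})$ over the trajectory gives a contribution of order $n\cdot 2^{-\rho}/k$, which is $\Theta(n/k)$, not $o(n/k)$; the argument can only close if $\rho_0$ is chosen large enough that the implied constant beats $(1-\ln 2)\rho$, and this requires an actual quantitative analysis of the BP fixed point on the \emph{planted} residual ensemble. This is precisely the piece you flag as the ``main obstacle'' without resolving it. The paper sidesteps the entire bias issue: in the unconditioned decimated formula $\PHI_t$ (decimation by a \emph{fixed} $\sigma=\vec 1$, no conditioning on $\sigma\models\PHI$), the surviving residual literals have sign exactly $\pm1$ with probability $\tfrac12$, so the signed weight sums in conditions {\bf Q2} and {\bf Q4} of Definition~\ref{Def_reasonable} concentrate around zero by symmetry, and the whole analysis of Section~\ref{Sec_dynamics} is a perturbation around the paramagnetic point with no systematic drift to control. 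Your planted framing makes the key technical step strictly harder than what the paper actually has to prove.

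There is also a secondary issue you do not address: to get the needed saving one must truncate the decimation at $T=(1-r/2^k)n$ and bound the last $n-T$ steps trivially, then use the first-moment bound on $|S(\PHI)|$ (as the paper does when it writes $|\cE|/2^T$ in Proposition~\ref{Prop_uniform}); your estimate $\bigl(\tfrac12+O(2^{-k})\bigr)^n$ implicitly assumes near-uniformity for all $n$ steps, which fails — near the end of the decimation the residual clauses become so short that BP marginals are genuinely polarized in both the planted and the null models.
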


\Thm~\ref{Thm_main} contrasts with the
very promising experimental results. 
The explanation for this is that the experiments were conducted for `small' $k=3,4,5$~\cite{Kroc,RTS}.
Indeed, already for $k=10$ large-scale experiments are difficult to carry out,
because the relevant density $r$ scales exponentially with $k$.
Thus, the good experimental performance can be attributed to the value of the constant $\rho_0$ in \Thm~\ref{Thm_main}.
Because the analysis is intricate as is, no attempt has been made to compute (or optimize) $\rho_0$.

Since Belief/Survey Propagation guided decimation were suggested~\cite{MPZ}, there have been various stabs at
explaining the performance of Belief/\SPGD\
by means of non-rigorous physics arguments~\cite{BMZ,pnas,RTS}.
We will review this work in more detail in \Sec~\ref{Sec_statMech} below, 
 but roughly speaking the predictions were as follows.
In chronological order,
\begin{enumerate}
\item[$\bullet$] the authors of~\cite{BMZ} opined that \BPGD\ fails for $r>(1+\eps_k)2^k\ln(k)/k$.
\item[$\bullet$] More optimistically, it was predicted in~\cite{pnas} that \BPGD\ will find satisfying assignments efficiently
		up to $r\sim2^k\ln2$.
\item[$\bullet$] Finally and most pessimistically, according to~\cite{RTS} \BPGD\ ought to fail for $r>\rho\cdot 2^k/k$ for an absolute
		constant $\rho>0$.
\end{enumerate}
All of these predictions derived from fairly sophisticated statistical mechanics reasoning, and both~\cite{pnas,RTS} quote experimental evidence,
	thereby (unintentionally) highlighting the need for a rigorous analysis.
\Thm~\ref{Thm_main} confirms the scenario put forward in~\cite{RTS}, but
does not sit well with the predictions from~\cite{pnas}.
Furthermore, the present analysis shows that 
 the reasoning from~\cite{BMZ}, where the demise of \BPGD\ was attributed to a certain change in the geometry of the
 set of satisfying assignments, is off the mark.

A potential objection to a negative result like \Thm~\ref{Thm_main} is that it might hinge on a small
detail of the algorithm that could easily be fixed.
However, in the sequel we will see  that
in the regime~(\ref{eqmain})
 our analysis
refutes a key hypothesis upon which {\tt BPdec} depends.
In other words, we show that {\tt BPdec} falls victim to a  conceptual issue, not a technicality.
Furthermore, some of the arguments used to prove \Thm~\ref{Thm_main} may be of independent interest as they can be expected to extend
to applications of BP beyond random $k$-SAT.
For instance, we develop a technique for tracing BP on certain quasi-random problem instances.

Finally, we point out that \Thm~\ref{Thm_main} has no immediate bearing on the potentially more powerful
Survery Propagation algorithm.
We will comment on Survey Propagation in \Sec~\ref{Sec_statMech} below.

\subsection{The {\tt BPdec} algorithm}\label{Sec_BPdec}
Fix a satisfiable $k$-CNF $\Phi$ on the variables $V=\cbc{x_1,\ldots,x_n}$.
We generally represent truth assignments as maps $\sigma:V\ra\cbc{-1,1}$, with $-1$ representing `false' and $1$ representing `true'.
	(It turns out that using $\pm1$ instead of the more common $0,1$ simplifies the description of BP quite a bit.)
Let $\cS(\Phi)$ denote the set of all satisfying assignments of $\Phi$.
The algorithm {\tt BPdec} is an attempt at implementing the following thought experiment.

\pagebreak

\noindent{
\begin{experiment}\label{Exp_dec}\upshape
\emph{Input:} A satisfiable $k$-CNF $\Phi$.\
\emph{Result:} An assignment $\sigma:V\rightarrow\cbc{-1,1}$.
\begin{tabbing}
mmm\=mm\=mm\=mm\=mm\=mm\=mm\=mm\=mm\=\kill
{\algstyle 0.}	\> \parbox[t]{40em}{\algstyle
	Let $\Phi_0=\Phi$.}\\
{\algstyle 1.}	\> \parbox[t]{40em}{\algstyle
	For $t=0,\ldots,n-1$ do}\\
{\algstyle 2.}	\> \> \parbox[t]{38em}{\algstyle
		Compute the fraction
			$$M_{x_{t+1}}(\Phi_{t})=\frac{\abs{\cbc{\sigma\in\cS(\Phi_t):\sigma(x_{t+1})=1}}}{|\cS(\Phi_t)|}$$
		of satisfying assignments of $\Phi_{t}$
				in which the variable $x_{t+1}$ takes the value $1$.}\\
{\algstyle 3.}	\> \> \parbox[t]{38em}{\algstyle
		Assign
				$$\sigma(x_{t+1})=\left\{\begin{array}{cl}
							1&\mbox{ with probability $M_{x_{t+1}}(\Phi_{t})$,}\\
							-1&\mbox{ with probability $1-M_{x_{t+1}}(\Phi_{t})$}.
							\end{array}\right.$$
							}\\
{\algstyle 4.}	\> \> \parbox[t]{38em}{\algstyle
		Obtain the formula $\Phi_{t+1}$ from $\Phi_{t}$ by substituting the value $\sigma(x_{t+1})$ for $x_{t+1}$ and simplifying, i.e.,
			\begin{enumerate}
			\item[$\bullet$] remove all clauses that got satisfied by setting $x_{t+1}$ to $\sigma(x_{t+1})$,
			\item[$\bullet$] omit $x_t$ from all the other clauses.
			\end{enumerate}
	}\\
{\algstyle 5.}	\> \parbox[t]{40em}{\algstyle
	Return the assignment $\sigma$.}
\end{tabbing}
\end{experiment}
}

\noindent
A moment's reflection reveals that the above experiment not only produces a satisfying assignment, but that
its (random) outcome is in fact uniformly distributed over the set $\cS(\Phi)$.
We observe that in the formulas $\Phi_t$ obtained at intermediate steps some clauses can (and typically will) have length less than $k$.

Referring to the successive assignments of variables
and the corresponding shrinking of the formula,
we call the above experiment the \emph{decimation process}.
The obvious obstacle to implementing it is the computation of the marginal probabilities $M_{x_{t+1}}(\Phi_{t})$.
Indeed, this task is $\#P$-hard on worst-case inputs.

Yet, under what conditions could we hope to compute (or approximate) the marginals $M_x(\Phi_{t})$?
Clearly, the marginals are influenced by `local' effects.
For instance, if $x$ occurs in a \emph{unit clause} $a$ of $\Phi_{t}$, i.e., a clause whose other $k-1$ variables have been assigned already
without satisfying $a$, then $x$ \emph{must} be assigned so as to satisfy $a$.
Hence, if $x$ appears in $a$ positively, then $M_x(\Phi_{t})=1$, and otherwise $M_x(\Phi_{t})=0$.
Similarly, if $x$ occurs \emph{only} positively in $\Phi_{t}$, then $M_x(\Phi_{t})\geq1/2$.
Furthermore, these local effects propagate:
 if $x$ appears in a clause $a$ whose other variables $y$ are subject to influences from other clauses $b_y\neq a$,
then the local effects operating on the variables $y$ may impact $x$ via $a$.
In the most extreme case, think of a variable $x$
that occurs in a clause $a$ whose other variables are all constrained by unit clauses 
to take values that fail to satisfy $a$.
Then $a$ effectively turns into a unit clause for $x$.

The key hypothesis underlying {\tt BPdec} is that in random formulas
such local effects \emph{determine} the marginals $M_x(\Phi_{t})$ asymptotically.
To define `local' precisely, we need a metric on the variables/clauses.
This metric is the shortest path distance on the \emph{factor graph} $G=G(\Phi_{t})$ of $\Phi_{t}$,
which is a bipartite graph whose vertices are the variables $V_t=\cbc{x_{t+1},\ldots,x_n}$ and the clauses of $\Phi_{t}$.
Each clause is adjacent to the variables that occur in it.
For an integer $\omega\geq1$ let $N^{\brk{\omega}}(x)$ signify the set of all vertices of $G$ that have distance at
most $2\omega$ from $x$.
Then the induced subgraph $G[N^{\brk\omega}(x)]$ corresponds to the sub-formula
of $\Phi_{t}^{\brk\omega}$ obtained by removing all clauses and variables at distance more than $2\omega$ from $x_t$.
Note that all vertices at distance precisely $2\omega$ are variables.
Hence, any satisfying assignment of
$\Phi$ induces a satisfying assignment of the sub-formula.
Let us denote by
	$$M_{x}^{\brk\omega}(\Phi_{t})=
			M_{x}(\Phi_{t}^{\brk\omega})=\frac{\abs{\cbc{\sigma\in\cS(\Phi_{t}^{\brk\omega}):\sigma(x)=1}}}{\abs{\cS(\Phi_{t}^{\brk\omega})}}$$
the marginal probability that $x_t$ takes the value $1$
in a random satisfying assignment of this sub-formula.

Of course, in the worst case the `local' marginals $M_{x}^{\brk\omega}(\Phi_{t})$ are
just as difficult to compute as the $M_{x}(\Phi_{t})$ themselves.
But  {\tt BPdec} employs an efficient heuristic called \emph{Belief Propagation} (`BP'),
which yields certain values $\mu_{x_t}^{\brk\omega}(\Phi_{t})\in\brk{0,1}$;
we will state this heuristic below.
If  $G[N^{\brk\omega}(x_t)]$ is a tree, then provably
$\mu_{x_t}^{\brk\omega}(\Phi_{t})=M_{x_t}^{\brk\omega}(\Phi_{t})$~\cite{BMZ}.
Moreover, standard arguments show that in a random formula $\PHI$ actually
$G[N^{\brk\omega}(x_t)]$ is a tree \whp\ so long as $\omega=o(\ln n)$.
More generally, in order to obtain an efficient algorithm it would be sufficient
for the BP outcomes $\mu_{x_t}^{\brk\omega}(\Phi_{t})$ to approximate the true overall marginals $M_{x_t}(\Phi_{t})$ well for \emph{some}
(say, polynomially computable, polynomially bounded) function $\omega=\omega(n)\geq1$.
This leads to the following hypothesis underpinning {\tt BPdec} (cf.~\cite{pnas}).

\begin{hypothesis}\label{Hyp_BP}
With probability $1-o(1)$ over the choice of $\PHI$ and
the random decisions in Experiment~\ref{Exp_dec}
the following holds for all $0\leq t< n$.
\begin{enumerate}
\item[i.] For any $\eps>0$ there is $\omega=\omega(\eps,k,r)$ such that
		$|M_{x_{t+1}}(\PHI_{t})-M_{x_{t+1}}^{\brk\omega}(\PHI_{t})|\leq\eps.$
\item[ii.] For any $\eps>0$ 
		there is $\omega=\omega(\eps,k,r)$ such that
		$|M_{x_{t+1}}(\PHI_{t})-\mu_{x_{t+1}}^{\brk\omega}(\PHI_{t})|\leq\eps.$
\end{enumerate}
\end{hypothesis}

Hypothesis~\ref{Hyp_BP} motivates the following algorithm~\cite{Allerton},
which is called \emph{\BPGD} because it combines BP (Step~2)
with a decimation step (Steps 3--4).

\noindent{
\begin{algorithm}\label{Alg_BPdec}\upshape\texttt{BPdec$(\Phi)$}\\\sloppy
\emph{Input:} A $k$-CNF $\Phi$ on $V=\cbc{x_1,\ldots,x_n}$.
\emph{Output:} An assignment $\sigma:V\rightarrow\cbc{-1,1}$.
\vspace{-2mm}
\begin{tabbing}
mmm\=mm\=mm\=mm\=mm\=mm\=mm\=mm\=mm\=\kill
{\algstyle 0.}	\> \parbox[t]{40em}{\algstyle
	Let $\Phi_0=\Phi$.}\\
{\algstyle 1.}	\> \parbox[t]{40em}{\algstyle
	For $t=0,\ldots,n-1$ do}\\
{\algstyle 2.}	\> \> \parbox[t]{38em}{\algstyle
		Use BP to compute $\mu_{x_{t+1}}^{\brk\omega}(\Phi_{t})$.}\\
{\algstyle 3.}	\> \> \parbox[t]{38em}{\algstyle
		Assign 
		$$\sigma(x_{t+1})=\left\{\begin{array}{cl}
							1&\mbox{ with probability $\mu_{x_{t+1}}^{\brk\omega}(\Phi_{t})$,}\\
							-1&\mbox{ with probability $1-\mu_{x_{t+1}}^{\brk\omega}(\Phi_{t})$}.
							\end{array}\right.$$
							}\\
{\algstyle 4.}	\> \> \parbox[t]{38em}{\algstyle
		Obtain the formula $\Phi_{t+1}$ from $\Phi_{t}$ by substituting the value $\sigma(x_{t+1})$ for $x_{t+1}$ and simplifying.}\\
{\algstyle 5.}	\> \parbox[t]{40em}{\algstyle
	Return the assignment $\sigma$.}
\end{tabbing}
\end{algorithm}}

\begin{remark}
The function $\omega=\omega(k,r,n)$ is ``hard-wired'' into the above algorithm, and our analysis does not
depend on any assumptions on $\omega$.
In particular, the statement of \Thm~\ref{Thm_main} is understood to hold for \emph{all} integer-valued functions $\omega=\omega(n)\geq0$.
\end{remark}

Although, strictly speaking, Hypothesis~\ref{Hyp_BP} provides neither a necessary nor a
sufficient condition for {\tt BPdec} to succeed on random $k$-CNFs \whp,
the hypothesis inspired the algorithm
	(we will get back to this in \Sec~\ref{Sec_statMech}).
Combining parts of the present analysis of the dynamics of the BP computation
	(more precisely, \Thm~\ref{Thm_dynamics} below)
with techniques for analyzing the geometry of the space of satisfying assignments,
 we proved the following in~\cite{Angelica}.

\begin{corollary}\label{Cor_main}
Both statements of Hypothesis~\ref{Hyp_BP} are false for $k,r$ satisfying~(\ref{eqmain}).
\end{corollary}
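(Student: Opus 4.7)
The plan is to show that Hypothesis~\ref{Hyp_BP}(ii) fails, from which (i) follows as well because (i) implies (ii). Indeed, under (i) one may take $\omega=\omega(\eps,k,r)$ \emph{constant}, and for constant $\omega$ the radius-$2\omega$ neighborhood $G[N^{\omega}(x_t)]$ in the random factor graph is a tree \whp\ (as recalled in the text). Belief Propagation being exact on trees then gives $\mu_{x_t}(\PHI_{t-1},\omega)=M_{x_t}(\PHI_{t-1},\omega)$, which combined with (i) yields $M_{x_t}(\PHI_{t-1})=\mu_{x_t}(\PHI_{t-1},\omega)+o(1)$, i.e., (ii).

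To refute (ii) I would argue by contradiction. Suppose there are $\omega(n)\ge1$ and $\eps(n)\ra0$ such that with probability $1-o(1)$ over $\PHI$ and Experiment~\ref{Exp_dec}'s coin flips $\xi$ we have $|M_{x_t}(\PHI_{t-1})-\mu_{x_t}(\PHI_{t-1},\omega)|\le\eps(n)$ for every $1\le t\le n$. Couple a run of \texttt{BPdec}$(\PHI)$ with a run of Experiment~\ref{Exp_dec} step by step: as long as the processes have made identical choices $\sigma(x_1),\ldots,\sigma(x_{t-1})$ they share the same decimated formula $\PHI_{t-1}$, and at step $t$ I apply the optimal coupling of $\mathrm{Ber}(\mu_{x_t}(\PHI_{t-1},\omega))$ with $\mathrm{Ber}(M_{x_t}(\PHI_{t-1}))$. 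On the event that (ii) holds along the shared trajectory the two Bernoullis agree at each step with conditional probability $\ge 1-\eps(n)$, so
\[
\pr\brk{\text{all $n$ coupled steps agree}}\;\ge\;(1-o(1))(1-\eps(n))^{n}\;\ge\;(1-o(1))\exp(-o(n)).
\]
Since Experiment~\ref{Exp_dec} outputs a satisfying assignment whenever $\PHI$ is satisfiable, agreement forces \texttt{BPdec} to succeed as well, giving $\mathrm{success}(\PHI)\ge(1-o(1))\exp(-o(n))$ on a set of $\PHI$ of probability $1-o(1)$. This contradicts \Thm~\ref{Thm_main}: for the fixed $c>0$ furnished by its $\Omega(n)$, one has $(1-o(1))\exp(-o(n))>\exp(-cn)$ for all sufficiently large $n$, whereas the theorem asserts $\mathrm{success}(\PHI)\le\exp(-cn)$ \whp\ Hence (ii) must fail, and by the first paragraph so must (i).

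The most delicate point is the legitimacy of the coupling: Hypothesis~\ref{Hyp_BP}(ii) is phrased along the Experiment's trajectory, yet it must be invoked along the coupled trajectory. This is fine because on the past-agreement event both processes look at the very same $\PHI_{t-1}$, and both $M_{x_t}(\PHI_{t-1})$ and $\mu_{x_t}(\PHI_{t-1},\omega)$ are functionals of $\PHI_{t-1}$ only and do not depend on the history that produced it. Apart from this bookkeeping the argument is elementary; the substantive input is the exponentially small success bound from \Thm~\ref{Thm_main}.
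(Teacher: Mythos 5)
Your coupling argument for refuting statement ii.\ is in substance the same as the paper's (Appendix~G): one couples \texttt{BPdec} with Experiment~\ref{Exp_dec} step by step, uses that both processes see the same decimated formula $\PHI_{t-1}$ while they agree, and observes that if the BP output $\mu_{x_t}$ is always within $\eps$ of the true marginal $M_{x_t}$ then the two processes stay identical with probability at least roughly $(1-\eps)^n$, whence $\mathrm{success}(\PHI)\geq\exp(-o(n))$ on a non-negligible set of $\PHI$, contradicting \Thm~\ref{Thm_main}. That part is fine (modulo the usual care one must take because ``the approximation holds along the trajectory'' is a global event; you flag the right issue and the fix is standard).

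The gap is in your first paragraph, the claim ``(i) implies (ii).'' You take $\omega$ constant and invoke that $G[N^\omega(x_t)]$ is a tree \whp\ so that $\mu_{x_t}(\PHI_{t-1},\omega)=M_{x_t}(\PHI_{t-1},\omega)$, and then chain with i.\ to get ii. But the tree statement you quote holds \whp\ for a \emph{fixed} index $t$, while ii.\ quantifies over all $t$. To have $\mu_{x_t}(\PHI_{t-1},\omega)=M_{x_t}(\PHI_{t-1},\omega)$ for \emph{every} $t$ you would need the factor graph of $\PHI$ to have no cycle of length $\leq2\omega$ at all, and for constant $\omega$ the number of such cycles converges to a Poisson with constant positive mean; thus the no-short-cycle event has probability converging to some $\eta=\eta(\eps,k,r)\in(0,1)$, not $1-o(1)$. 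Hence i.\ does \emph{not} imply ii.\ as stated. What i.\ does give is the $(\omega,\eps)$-approximation property on a set of $\PHI$ of probability $\geq\eta-o(1)>0$. The paper makes exactly this move: it introduces Lemma~\ref{Lemma_acyclic} to supply the constant-probability no-cycle event, and—crucially—phrases the refutation of ii.\ so that it already rules out the approximation holding with \emph{any} fixed positive probability $\eta$, not only \whp. Your coupling argument can be adapted to this weaker hypothesis (success probability $\geq\eta'\exp(-o(n))$ on a set of $\PHI$ of probability bounded below still contradicts \Thm~\ref{Thm_main}'s \whp\ bound), but as written you claim a strict implication that is false, and you never note that your refutation of ii.\ must be (and can be) run at the level of positive probability rather than \whp. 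That is the one idea you are missing relative to the paper.
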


To complete the presentation of the algorithm,
we need to define Belief Propagation for $k$-SAT;
for a detailed derivation we point the reader to~\cite{BMZ,MM,Pearl}.
Ultimately, we need to define the value $\mu_{x_{t+1}}^{\brk\omega}(\Phi_{t})$ in Step~2 of {\tt BPdec}.

Let $N(v)$ denote the neighborhood of a vertex $v$ of the factor graph $G(\Phi_{t})$.
For a variable $x\in V_t$ and a clause $a\in N(x)$ we will denote the ordered pair $(x,a)$ by $x\ra a$.
Similarly, $a\ra x$ stands for the pair $(a,x)$.
Furthermore, we let $\sign(x,a)=1$ if $x$ occurs in $a$ positively, and  $\sign(x,a)=-1$ otherwise.

The {\bf\em message space} $\cM(\Phi_{t})$ is the set of all tuples
	$$(\mu_{x\ra a}(\zeta))_{x\in V_t,\,a\in N(x),\,\zeta\in\{-1,1\}}$$
such that $\mu_{x\ra a}(\pm1)\in\brk{0,1}$
and $\mu_{x\ra a}(-1)+\mu_{x\ra a}(1)=1$ for all $x,a,\zeta$.
For $\mu\in\cM(\Phi_t)$ we define 
	$\mu_{a\ra x}(\sign(x,a))=1$
and 
	\begin{equation}\label{eqBPai}
	\mu_{a\ra x}(-\sign(x,a))=
			1-\hspace{-4mm}\prod_{y\in N(a)\setminus\cbc x}\hspace{-4mm}
				\mu_{y\ra a}\bc{-\sign(y,a)}.
	\end{equation}
Furthermore, we define the {\bf\em Belief Propagation operator} $\BP$ 
as follows:
for any $\mu\in\cM(\Phi_{t})$ we define $\BP(\mu)\in\cM(\Phi_{t})$ by letting 
	\begin{eqnarray}
	(\BP(\mu))_{x\ra a}(\zeta)&=&\frac{\displaystyle\prod_{b\in N(x)\setminus\cbc a}\mu_{b\ra x}(\zeta)}
			{\displaystyle\prod_{b\in N(x)\setminus\cbc a}\mu_{b\ra x}(-1)+
			\prod_{b\in N(x)\setminus\cbc a}\mu_{b\ra x}(1)}
			\label{eqBPoperator}
	\end{eqnarray}
unless the denominator equals zero, in which case $(\BP(\mu))_{x\ra a}(\zeta)=\frac12$.

Finally, the $\mu_x^{\brk\omega}(\Phi_t)$ in steps~2--3 of {\tt BPdec} are defined as follows.
Let $\mu^{\brk0}=\frac12\cdot\vecone\in\cM(\Phi_{t})$ be the vector with all entries equal to $\frac12$.
Moreover, define inductively $\mu^{\brk{\ell+1}}=\BP(\mu^{\brk{\ell}})$ for $0\leq\ell<\omega$.
Then
	\begin{eqnarray}\nonumber
	\mu_x^{\brk\omega}(\Phi_{t})&=&\frac{\displaystyle\prod_{b\in N(x)}\mu_{b\ra x}^{\brk\omega}(1)}
			{\displaystyle\prod_{b\in N(x)}\mu_{b\ra x}^{\brk\omega}(-1)+
			\prod_{b\in N(x)}\mu_{b\ra x}^{\brk{\omega}}(1)}
		\label{eqBPmarginal}
	\end{eqnarray}
for any $x\in V_t$,
unless the denominator is zero, in which case we set $\mu_x^{\brk\omega}(\Phi_{t})=\frac12$.

The intuition here is that the $\mu_{x\ra a}(\zeta)$ are `messages' from a variable $x$ to the clauses $a$ in which
$x$ occurs, indicating how likely $x$ were to take the value $\zeta$ if clause $a$ were removed from the formula.
Based on these, (\ref{eqBPai}) yields messages $\mu_{a\ra x}(\zeta)$ from clauses $a$ to variables $x$, indicating
the probability that $a$ is satisfied if $x$ takes the value $\zeta$ and all other variables $y\in N(a)\setminus\cbc x$ are assigned
independently with probability $\mu_{y\ra a}(\pm1)$.
The BP operator~(\ref{eqBPoperator}) then uses the messages $\mu_{a\ra x}$ in order to `update' the messages
from variables to clauses.
More precisely, for each $x$ and $a\in N(x)$ the new messages $(\BP(\mu))_{x\ra a}(\zeta)$ are computed
under the hypothesis that all other clauses $b\in N(x)\setminus\cbc a$ are satisfied with probabilities $\mu_{b\ra x}(\zeta)$ independently
if $x$ takes the value $\zeta$.
Finally, the difference between~(\ref{eqBPoperator}) and~(\ref{eqBPmarginal}) is that the latter product runs over \emph{all} clauses $b\in N(x)$.
An inductive proof shows that, if for a variable $x$ the subgraph $G\brk{N^{\brk\omega}(x)}$ of the factor graph is acyclic, then
in fact $\mu_x^{\brk\omega}(\Phi_t)=M_x^{\brk\omega}(\Phi_t)$~\cite{BMZ}.

\subsubsection{Variations of the algorithm.}
{\tt BPdec} could be called the ``vanilla'' version of \BPGD.
It is the simplest but arguably the most natural variant.
Nonetheless, several other installments have been suggested and experimented with.
They differ in how the number $\omega$ of iterations is chosen and how exactly the result
of the Belief Propagation calculation is used to decimate.

In the ``vanilla'' variant we used an {\em a priori} number $\omega$ of iterations.
An alternative idea is to iterate the Belief Propagation operator until it reaches a fixed point.
More precisely, to accommodate numerical inaccuracies one could stop after $\omega$ iterations, with $\omega\geq1$ the least integer such that
for some small $\eps>0$ we have
	\begin{equation}\label{eqBPconv}
	\max_{x\ra a}|\mu^{\brk\omega}_{x\ra a}(1)-\mu^{\brk{\omega-1}}_{x\ra a}(1)|<\eps,
	\end{equation}
where the maximum is taken over all edges of the factor graph (e.g., \cite{BMZ,MM}).
Unfortunately, it is not generally assured that the convergence criterion~(\ref{eqBPconv}) will ever be met. 
Hence, one would need to specify how to proceed otherwise.
For instance, one could 
specify an a priori maximum number of iterations.
Our analysis can be adapted easily to accommodate these modifications (details omitted).

More importantly, one could come up with a more sophisticated {\em decimation strategy}, i.e.,
a different way of using the BP result $\mu^{\brk\omega}$ to choose the variable to be assigned next and its value.
In {\tt BPdec} we went for the ``vanilla rule'':
	the variables are assigned in the natural order, and each time the assignment is performed randomly based on the BP estimate
	of the marginal.

But in experiments a more common decimation strategy is the ``most biased variable'' rule:
	at each time choose a variable $x\in V_t$ that maximizes the ``bias'' $|\mu_x^{\brk\omega}(\Phi_t)-\frac12|$,
	and assign it randomly based on the BP estimate.
Experimentally the most biased variable rule allows for slightly better results than the vanilla rule.
For instance, in random $4$-SAT, experiments indicate that the former succeeds up to $m/n=9.24$,
and the latter up to $m/n=9.05$ \cite{Kroc,RTS}.

The statistical mechanics ideas that underpin Belief Propagation guided decimation do not 
endorse a preference for the ``most biased variable'' rule over the ``vanilla'' strategy.
But a heuristic argument in favor of ``most biased variable''
is that it might reduce the effect of numerical errors building up~\cite{Allerton}.
The present analysis does not seem to extend to ``most biased variable'' in a straightforward manner.
Thus, analyzing it remains an interesting open problem.

\subsubsection{Comparison with combinatorial algorithms.}
The difference between the previously studied combinatorial algorithms for random $k$-SAT and Belief Propagation can
be explained nicely in terms of the factor graph.
Indeed, in order to decide upon the value of a variable the previous algorithms only took the clauses and variables
	at distance two~\cite{AchSorkin,PureLiteral,ChaoFranco2,mick,FrSu,HajiSorkin,KKL} or four~\cite{BetterAlg} into consideration.
Based on this information, the variable is assigned following some simple combinatorial rule.

{\tt BPdec} can be viewed as a systematic way of making a ``less shortsighted'' decision.
The algorithm takes into account clauses/variables at distance up to $2\omega$,
	where $\omega$ may be a function that grows with $n$.
Indeed, the idea of determining the marginal $M_{x_{t+1}}^{\brk\omega}(\PHI_t)$
yields a meaningful way of incorporating the data from all these clauses/variables.
In particular, {\tt BPdec} implicitly implements many of the rules that are used in the combinatorial algorithms
	(e.g., the ``Unit Clause'' rule).
In this sense, {\tt BPdec} can be seen as a clever generalization of many of these combinatorial algorithms.
However, this also means that the techniques used in the previous analyses of combinatorial algorithms
are insufficient to tackle  {\tt BPdec}.

\subsection{The statistical physics perspective}\label{Sec_statMech}

\subsubsection{Clustering and correlation decay.}
Closely following the non-rigorous paper~\cite{pnas}, we discuss in this section  the statistical mechanics motivation 
for  {\tt BPdec}.
This will provide the basis for the discussion of the non-rigorous predictions as to the algorithm's performance.

According to the physicists' ``cavity method'', 
the random formula $\PHI$ undergoes several further
phase transitions prior to the satisfiability threshold. 
These phase transitions affect the correlations 
between the truth values that can be assigned to different variables.
Thus, fix a variable $x$ and let $\omega=\omega(n)=o(\ln n)$ be a function that tends to infinity slowly, say $\omega=\lceil\ln\ln n\rceil$.
Furthermore, let $\cB$ be the set of all variables at distance exactly $2\omega$ from $x$ in the factor graph.
How do the values assigned to variables on the ``far away boundary'' $\cB$ affect the truth value of $x$?

The strongest possible decay of correlations occurs when the boundary $\cB$ has no impact on $x$ at all.
To formalize this, let $\tau:\ra\cbc{-1,1}$ be a satisfying assignment of $\PHI$
and let $M^{\brk\omega}_{x}(\PHI,\tau)$ be the fraction of all satisfying assignments of $\PHI$ that set $x$ to true and
that coincide with $\tau$ on $\cB$. 
In symbols,
	$$M^{\brk\omega}_{x}(\PHI,\tau)=\frac{\abs{\cbc{\sigma\in\cS(\PHI):\sigma(x)=1,\,\sigma(y)=\tau(y)\mbox{ for all }y\in\cB}}}
			{\abs{\cbc{\sigma\in\cS(\PHI):\sigma(y)=\tau(y)\mbox{ for all }y\in\cB}}}.$$
Also recall that $M_x(\PHI)$ denotes the marginal probability that $x$ takes the value ``true'' in a random satisfying assignment of $\PHI$
	(without any boundary condition).
The  \emph{Gibbs uniqueness} condition requires that
	\begin{equation}\label{eqGibbsUnique}
	\max_{\tau\in\cS(\PHI)}\abs{M^{\brk\omega}_{x}(\PHI,\tau)-M_x(\PHI)}=o(1).
	\end{equation}
In words, fixing the ``far away'' variables does not make it noticeably more or less like
for $x$ to take the value ``true''. 
Hence, the marginal $M_x(\PHI)$ is governed entirely by the effects of variables at distance less than $2\omega$ from $x$,
	i.e., by the \emph{local} structure of the formula.

Consequently, it seems reasonable to expect that Belief Propagation yields the correct marginals so long as~(\ref{eqGibbsUnique}) holds.
It is known rigorously that \whp~(\ref{eqGibbsUnique}) holds up to $r\sim r_u= 2\ln k/k$
(a function that tends to zero for large $k$), and that Belief Propagation does indeed yield the correct marginals for such densities~\cite{MontanariShah}.
That is,  for $r<r_u$ \whp\
	\begin{equation}\label{eqBPworks}
	|\mu^{\brk\omega}_x(\PHI)-M_x(\PHI)|=o(1)\qquad\mbox{ for any $x\in V$}.
	\end{equation}

To define the second correlation decay property,  let us denote by $\vec\tau$ a uniformly random
element of $\cS(\PHI)$.
Then the \emph{non-reconstruction} condition is that
	\begin{equation}\label{eqnonreconstruction}
	\Erw_{\vec\tau}\abs{M^{\brk\omega}_{x}(\PHI,\vec\tau)-M_x(\PHI)}=o(1).
	\end{equation}
Hence, fixing the far away boundary to a ``typical'' satisfying assignment has no discernible effect on $x$.
Neglecting a $o(1)$-fraction of ``atypical'' cases $\vec\tau$,
one might still expect~(\ref{eqBPworks}) to hold so long as~(\ref{eqnonreconstruction}) is satisfied.
However, this conjecture awaits a rigorous proof.
According to the cavity method, (\ref{eqnonreconstruction}) holds up to $r\sim r_d=2^k\ln(k)/k$.
Moreover, the best rigorously analyzed algorithm (which is based on local search) succeeds in finding a satisfying assignment in polynomial time
right up to $r\sim r_d$ \whp~\cite{BetterAlg}.

To state the third property, 
let us denote the joint distribution of the truth values of the variables $\cB$ under a random
satisfying assignment by $M_{\cB}$.
Thus, $M_{\cB}$ is a probability distribution over $\cbc{-1,1}^{{\cB}}$.
Then the \emph{replica symmetry} condition requires that
the truth values of the variables in $\cB$ are asymptotically independent.
Formally,
	\begin{equation}\label{eqreconstruction}
	\abs{M_\cB(\tau)-\hspace{-3mm}\prod_{x\in \cB:\tau(x)=1}\hspace{-3mm}M_{x}(\PHI)\cdot\hspace{-3mm}
		\prod_{x\in\cB:\tau(x)=-1}\hspace{-3mm}(1-M_{ x}(\PHI))}=o(1)\qquad\mbox{for any $\tau\in \cbc{-1,1}^{{\cB}}$}.
	\end{equation}
It has duly been conjectured in~\cite{pnas} that~(\ref{eqreconstruction}) suffices to obtain~(\ref{eqBPworks}), i.e., to ensure that Belief Propagation
yields the correct marginals on $\PHI$.
The cavity method predicts that~(\ref{eqreconstruction}) holds for $$r\leq r_c=2^k\ln2-3\ln2+O(2^{-k}),$$
	while the conjectured  satisfiability threshold~\cite{Mertens} is
	\begin{equation}\label{eqthreshold}
	\rk=2^k\ln2-\frac{1+\ln2}2+O(2^{-k})\approx r_c+0.19.
	\end{equation}
The best current rigorous lower bound on $\rk$ matches $r_c$~\cite{Kosta}.

The densities $r_d,r_c$ are also conjectured to mark a change in the geometry of the set $\cS(\PHI)$ of satisfying assignments.
Let us turn $\cS(\PHI)$ into a graph by considering $\sigma,\tau\in\cS(\PHI)$ adjacent if their Hamming distance is equal to one.
While for densities $r<r_d$ the graph $\cS(\PHI)$ is conjectured to be (essentially) connected, for $r_d<r<r_c$ it shatters into an exponential
number of tiny connected components \whp\
More precisely, $\cS(\PHI)$ admits a decomposition
	\begin{equation}\label{eqclustering}
	\cS(\PHI)=\bigcup_{i=1}^N\cC_i
	\end{equation}
into ``clusters'' $\cC_i$ such that $|\cC_i|\leq\exp(-\Omega(n))|\cS(\PHI)|$ for all $i$ and such that
any two satisfying assignments in different clusters have Hamming distance $\Omega(n)$.
This decomposition was 
established rigorously in~\cite{AchACO}.
Intuitively, the cluster decomposition explains why~(\ref{eqnonreconstruction}) fails to hold for $r>r_d$:
the conditional marginal $M_{x}^{\brk\omega}(\PHI,\tau)$ 
corresponds to the marginal of $x$ \emph{within the cluster of $\tau$}, in contrast to the marginal $M_x(\PHI)$ over the {\em entire} set of satisfying assignments.

Further,  for $r_c<r<\rk$ the set of satisfying assignments still decomposes into exponentially many well-separated clusters \whp\
But now a bounded number of clusters are conjectured to dominate.
That is, if we order the clusters by size $\abs{\cC_1}\geq\cdots\geq\abs{\cC_N}$, then for a bounded number $\gamma=O(1)$
we have
	\begin{equation}\label{eqcondensation}
	\abs{\cS(\PHI)}\sim\abs{\cC_1\cup\cdots\cup\cC_\gamma}.
	\end{equation}
This structure 
goes by the name of \emph{condensation} in physics.
The values that different variables take within each cluster $\cC_1,\ldots,\cC_\gamma$ are conjectured
to be heavily dependent.
Furthermore, (\ref{eqcondensation}) implies that $M_{\cB}$ is but a convex combination of
a small (bounded) number of such intra-cluster distributions. 
Hence, 
the ``condensed'' geometry~(\ref{eqcondensation}) appears to be irreconcilable with the factorization property~(\ref{eqreconstruction}).

\subsubsection{Belief Propagation.}

Based on this ``static'' picture, three different hypotheses have been put forward as to the likely performance
of Belief Propagation guided decimation.
Most optimistically,  the authors of~\cite{pnas} argue that Belief Propagation guided decimation ought to
find satisfying assignments efficiently for densities right up to $r_c$.
Their prediction derives from the opinion that~(\ref{eqreconstruction}) should be sufficient to obtain~(\ref{eqBPworks}),
and that~(\ref{eqBPworks}) is the key to the success of Belief Propagation Guided Decimation.
Specifically, \cite{pnas} refers to the the ``most biased variable'' variant.
However, the precise decimation strategy is irrelevant to their considerations, which are in effect 
at odds with \Thm~\ref{Thm_main}.

A second prediction is that \BPGD\ should fail to find satisfying assignments for $r>r_d$~\cite{BMZ}. 
This conjecture is based on the hunch that the decomposition of $\cS(\PHI)$ into
``clusters'' and the ensuing demise of~(\ref{eqnonreconstruction}) 
cause~(\ref{eqBPworks}) to fail.
Agreeing with~\cite{pnas}, the authors appear to view (\ref{eqBPworks}) as the key to the performance of {\tt BPdec}.

According to the third prediction~\cite{RTS}, {\tt BPdec} fails for densities $r>\rho_0\cdot 2^k/k$, with $\rho_0>0$ an absolute constant (independent of $k$).
This prediction is based on a non-rigorous analysis of the decimation process, i.e., the idealized thought experiment that {\tt BPdec} strives
to implement (Experiment~\ref{Exp_dec}).
Crucially, the authors of~\cite{RTS} realize that (\ref{eqBPworks}) does {\em not} guarantee the success of {\tt BPdec}.

Instead, their analysis indicates that as the decimation process proceeds to assign variables,
the remaining unassigned variables are bound by clauses that become shorter and shorter.
In effect, the clauses become more and more difficult to satisfy, and thus the remaining set of satisfying assignments shrinks rapidly.
In other words, successive decimation of variables has a similar effect as increasing the density of the formula.
Consequently, after a number $t$ of decimations {\tt BPdec} may wind up with a formula $\PHI_{t}$
that violates~(\ref{eqreconstruction}) and thus~(\ref{eqBPworks}), even though the initial formula $\PHI$ may well have satisfied those conditions.
The contribution~\cite{RTS} supersedes an earlier attempt at studying the effect of decimation~\cite{Allerton}.

\Thm~\ref{Thm_main} is in agreement with the prediction from~\cite{RTS}.
But an important advantage of the present work over the (non-rigorous) contribution~\cite{RTS} is that
here we manage to analyze the {\em actual algorithm} {\tt BPdec}.
By contrast, \cite{RTS} only deals with the decimation process
	(i.e., Experiment~\ref{Exp_dec}, the idealized experiment that assumes knowledge of the precise marginals).
That is,
going significantly beyond the ambition of~\cite{RTS},
here we develop a technique for explicitly analyzing the dynamics of the message passing procedure.

In summary, the predictions in~\cite{BMZ,pnas} as to the performance of Belief Propagation are inaccurate
because they ignore the effect of decimation.
By contrast, as conjectured in~\cite{RTS} and proved here, in actuality  {\tt BPdec} 
gets itself into trouble by assigning
and decimating one variable after the other.
Thus, computing the correct marginals in the original formula $\PHI$ is one thing,
	but continuing to do so as decimation proceeds is quite another.%
\footnote{%
	Let us mention, as a cautionary tale, that both~\cite{pnas,RTS} quote experimental evidence
	to support their claims. This illustrates the difficulty of producing reliable experimental results
	on large random CSPs, and thus the need for rigorous results.}

\subsubsection{Survey Propagation.}
Let us briefly comment on Survey Propagation guided decimation,
the physicists' flagship algorithm~\cite{BMZ,MPZ}.
It is based on the idea of working with a different probability distribution.
Namely, instead of the uniform probability over satisfying assignments, 
Survey Propagation aims for the uniform distribution over the \emph{clusters} $\cC_i$ in the 
decomposition~(\ref{eqclustering}).
These clusters can be encoded as generalized assignments $\tau:V\ra\cbc{-1,0,1}$,
with $\tau(x)=\pm1$ indicating that variable $x$ takes the value $\pm1$ in \emph{all} the assignments
in $\cC_i$, and $\tau(x)=0$ indicating that $x$ can take either value~\cite{MMW,MM}.
Survey Propagation guided decimation combines a message passing algorithm for approximating the marginals of these generalized assignments
with a decimation procedure (see~\cite{BMZ} for details).

According to the cavity method, the Survey Propagation distribution enjoys a factorization property akin to~(\ref{eqreconstruction})
for densities $r$ right up to $\rk$.
In fact, the physicists' computation of the conjectured $\rk$ depends on this assumption~\cite{Mertens,MM}.
Furthermore, the (conjectured) factorization property nurtured hopes that Survey Propagation
may perform well for densities ``close to'' $\rk$~\cite{BMZ,pnas,MPZ}.

Given the above discussion of Belief Propagation, the obvious problem with this forecast is that it ignores the effect of decimation.
More precisely, it might well be that in the undecimated random formula $\PHI$ the Survey Propagation distribution
factorizes for densities right up to $\rk$.
But this might not be the case in a formula $\PHI_t$ where some variables have been decimated.
Since the arguments of~\cite{RTS} do not seem to extend to Survey Propagation,
	there is currently not even a non-rigorous study of the effect of decimation in this case.
Thus, while experiments consistently indicate that Survey Propagation guided decimation outperforms {\tt BPdec}, 
it is unclear how much so.
Guided by the present analysis of Belief Propagation, I venture to pose

\begin{conjecture}\label{Con_SP}
There is an absolute 
constant $\rho_1>0$ such that the
Survey Propagation Guided Decimation algorithm as stated in~\cite{BMZ} fails to find a satisfying assignment of $\PHI$ \whp\
for $r>\rho_1\cdot 2^k/k$.
\end{conjecture}

\noindent
If true, Conjecture~\ref{Con_SP} would imply that \SPGD\ is inferior to conceptually much simpler local-search algorithms (such as~\cite{BetterAlg}),
at least for large clause lengths $k$.

\subsection{Further related work}\label{Sec_related}

In full generality, Belief Propagation is a generic technique for computing the marginals of a probability distribution described by
an ``acyclic graphical model''~\cite{Pearl}.
But special instantiations of Belief Propagation have been (re)discovered several times for several applications.
Examples include statistical inference~\cite{Pearl}, coding theory~\cite{Gallager} and statistical mechanics~\cite{Bethe}, where the method
	is also referred to as ``Bethe-Peierls approximation''.
For a coherent discussion see~\cite{MM} and the references therein.

In spite of BP's practical success (and popularity), rigorous analyses of the algorithm are scarce.
A few exist in the context of LDPC decoding (e.g., \cite{MU,RSU01}).
We also analyzed BP for graph $3$-coloring~\cite{BP3col} on a certain class of expander graphs.
A further related result deals with the conceptually \emph{much} simpler Warning Propagation algorithm on certain random 3-CNFs
(``planted model'')~\cite{WP}.
In the random $k$-XORSAT problem (random linear equations mod $2$),
Belief Propagation reduces to Warning Propagation due to the algebraic nature of the problem and can thus be analyzed easily~\cite{MM}.
Furthermore, there has been some recent progress on analyzing certain variants of BP (such as the ``max-product algorithm'')
for certain optimization problems that are polynomial-time solvable in the worst case (e.g.,~\cite{BSS,GSW}).

A distinctive feature of \BPGD\ in comparison to earlier algorithmic applications of Belief Propagation
is the decimation step
(that the algorithm assigns one variable at a time and reruns Belief Propagation on the reduced formula).
In terms of analyzing the algorithm, decimation poses a substantial challenge.
The present paper furnishes the first analysis of this kind of algorithm on a non-trivial type of instances.
By contrast, previous analyses deal with algorithms that use BP  in a `one shot' fashion, i.e.,
the supposed marginals obtained via BP are used directly to assign all variables \emph{at once}~\cite{BP3col,MU,RSU01}.
Roughly speaking, this approach seems to work best if the problem instances are somewhat over-constrained so that
there is (essentially) a unique solution.
By contrast,
as we saw in \Sec~\ref{Sec_statMech}
for $r<\rk$ the random formula $\PHI$ \whp\ has \emph{exponentially} many satisfying
assignments, whose typical pairwise distance is close to $\frac n2$ \whp\
Furthermore, as we saw in \Sec~\ref{Sec_statMech} it is the decimation step that precipitates the
demise of \BPGD.

In the context of random constraint satisfaction problems,
Belief Propagation works out to be a special (the ``replica symmetric'') case of a larger statistical mechanics framework called the \emph{cavity method}~\cite{MM}.
The cavity method provides a toolbox for deriving highly non-trivial exact conjectures on various phase transitions in random CSPs.
The conjectured value~(\ref{eqthreshold}) of the $\rk$ is an example, but the method is quite general and has been applied
to a host of further CSPs as well.

The study of the BP marginals on the undecimated random formula $\PHI$ is somewhat related to the so-called
\emph{reconstruction problem}.
This problem has been studied on `symmetric' random CSPs, which include problems such as
(hyper)graph coloring~\cite{Prasad}, but not $k$-SAT.
The proofs in~\cite{Prasad} are based on indirect arguments (related to the second moment method),
which do not seem to extend to an analysis of {\tt BPdec}.

\subsection{Preliminaries and notation}\label{Sec_pre}

In this section we collect a few well-known results and introduce a bit of notation.
First of all, we note for later reference a well-known estimate of the expected number of satisfying assignments (see, e.g., \cite{yuval} for a derivation).

\begin{lemma}\label{Lemma_expected}
We have
	$\Erw|\cS(\PHI)|=\Theta(2^n(1-2^{-k})^m)\leq2^n\exp(-rn/2^k)$.
\end{lemma}

Furthermore, we are going to need the following Chernoff bound on the tails of a binomially distributed random variable or, more generally,
a sum of independent Bernoulli trials~\cite[p.~21]{JLR}.

\begin{lemma}\label{Lemma_Chernoff}
Let $X$ be a sum of independent Bernoulli variables with mean $\mu>0$.
Let $$\varphi(x)=(1+x)\ln(1+x)-x.$$
Then for any $t>0$,
	\begin{eqnarray*}
	\pr\brk{X>\mu+t}\leq\exp(-\mu\cdot\varphi(t/\mu)),&&
	\pr\brk{X<\mu-t}\leq\exp(-\mu\cdot\varphi(-t/\mu)).
	\end{eqnarray*}
In particular, for any $t>1$ we have
	$\pr\brk{X>t\mu}\leq\exp\brk{-t\mu\ln(t/\eul)}.$
\end{lemma}

For a real $b\times a$ matrix $\Lambda$ 
let 
	$$\cutnorm{\Lambda}=\max_{\zeta\in\RR^{V_a}\setminus\cbc0}\frac{\norm{\Lambda\zeta}_1}{\norm{\zeta}_\infty}.$$
Thus, $\cutnorm{\Lambda}$ is the  norm of $\Lambda$ viewed as an operator from $\RR^a$ equipped with the $L^\infty$-norm
to $\RR^b$ endowed with the $L^1$-norm.
For a set $A\subset\brk a=\cbc{1,\ldots,a}$ we let $\vecone_A\in\{0,1\}^{a}$ denote the indicator vector of $A$.
The following well-known fact about the norm $\cutnorm\cdot$ of matrices with diagonal entries equal to zero is going to come in handy.
	
\begin{fact}\label{Lemma_cutnorm}
For a real $b\times a$ matrix  $\Lambda$ with zeros on the diagonal we have
	$$\cutnorm{\Lambda}\leq 24\max_{A\subset\brk a,B\subset \brk b:A\cap B=\emptyset}\abs{\scal{\Lambda\vecone_A}{\vecone_B}}.$$
\end{fact}

\noindent
Finally, throughout the paper we let $S_n$ denote the set of permutations of $\brk n$.

\section{The probabilistic framework for analyzing {\tt BPdec}}\label{Sec_framework}

\subsection{Outline}

The single most important technique for analyzing algorithms on the random input $\PHI$ is the
``method of deferred decisions''.
Where it applies, the dynamics of the algorithm can typically be traced tightly via differential equations,
martingales, or Markov chains.
Virtually all of the previous analyses of algorithms for random $k$-SAT are based on this
	approach~\cite{AchSorkin,PureLiteral,ChaoFranco2,mick,BetterAlg,FrSu,HajiSorkin,KKL}.
Unfortunately, the `deferred decisions' technique is limited to very simple, `shortsighted' algorithms
that decide upon the value of a variable $x$ on the basis of the clauses/variables at distance, say, one or two
from $x$ in the factor graph~\cite{AchSorkin}.
By contrast, in order to assign some variable $x_t$, {\tt BPdec} explores clauses at distance up to $2\omega$ from $x_t$,
where (potentially) $\omega=\omega(n)\ra\infty$.
This renders a `deferred decisions' approach hopeless.

Therefore, to prove \Thm~\ref{Thm_main} we need a fundamentally different strategy.
In the present section we set up the probabilistic framework for the analysis.
We will basically reduce the analysis of {\tt BPdec} to the problem of analyzing the BP operator
on the formula that is obtained from $\PHI$ by substituting `true' for the first
$t$ variables $x_1,\ldots,x_t$ and simplifying (\Thm~\ref{Thm_bias} blow). 
In the next section we will show that this decimated formula enjoys a few simple quasirandomness
properties with probability extremely close to one.
Finally, 
we will show that these  properties suffice
to trace  the BP computation.

Applied to a fix, non-random formula $\Phi$ on $V=\cbc{x_1,\ldots,x_n}$, {\tt BPdec} yields an assignment
$\sigma:V\ra\cbc{-1,1}$ (that may or may not be satisfying).
This assignment is random, because {\tt BPdec} itself is randomized.
Hence, for any fixed $\Phi$ running {\tt BPdec}$(\Phi)$ induces a probability distribution $\beta_\Phi$ on $\cbc{-1,1}^V$.
With $\cS(\Phi)$ the set of all satisfying assignments of $\Phi$,
the `success probability' of {\tt BPdec} on $\Phi$ is just
	$$\mathrm{success}(\Phi)=\beta_\Phi(\cS(\Phi)).$$

Thus, to establish \Thm~\ref{Thm_main} we need to show that in the \emph{random} formula,
	$$\mathrm{success}(\PHI)=\beta_{\PHI}(\cS(\PHI))=\exp(-\Omega(n))$$
is exponentially small  \whp\
To this end, we are going to prove that the measure $\beta_{\PHI}$ is `rather close' to the uniform distribution
on $\cbc{-1,1}^V$ \whp, of which $\cS(\PHI)$ constitutes only an exponentially small fraction.

To facilitate the analysis, we are going to work with a slightly modified version of {\tt BPdec}.
While the original {\tt BPdec} assigns the variables in the natural order $x_1,\ldots,x_n$,
the modified version {\tt PermBPdec} chooses a permutation $\pi$ of $\brk n$ uniformly at random
and assigns the variables in the order $x_{\pi(1)},\ldots,x_{\pi(n)}$.
Let $\bar\beta_\Phi$ denote the probability distribution induced on $\cbc{-1,1}^V$ by {\tt PermBPdec}$(\Phi)$.
Because the uniform distribution over $k$-CNFs
is invariant under permutations of the variables, we obtain

\begin{fact}\label{Fact_pi}
If $\bar\beta_{\PHI}(\cS(\PHI))\leq\exp(-\Omega(n))$ \whp,
then $\mathrm{success}(\PHI)=\beta_{\PHI}(\cS(\PHI))\leq\exp(-\Omega(n))$ \whp
\end{fact}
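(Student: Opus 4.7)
The plan is to exploit the permutation-invariance of the law of $\PHI$ together with a one-line Markov argument over the random ordering $\pi$ internal to {\tt PermBPdec}.

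First, I would rewrite $\bar\beta_\Phi(S(\Phi))$ in terms of the standard success probability. For a fixed $k$-CNF $\Phi$ on $V=\cbc{x_1,\ldots,x_n}$, running {\tt PermBPdec}$(\Phi)$ amounts to drawing a uniform permutation $\pi$ of $\brk{n}$ and then running {\tt BPdec} on $\Phi$ using the variable order $x_{\pi(1)},\ldots,x_{\pi(n)}$. Relabeling each $x_{\pi(i)}$ as $x_i$ turns this into the standard {\tt BPdec} applied to the relabeled formula $\Phi^{\pi}$, and the output satisfies $\Phi$ iff its relabeling satisfies $\Phi^{\pi}$. Consequently
$$\bar\beta_\Phi(S(\Phi))=\Erw_\pi\brk{\mathrm{success}(\Phi^{\pi})}.$$

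Next I would invoke the symmetry already used in the paper: the uniform measure on $k$-CNFs is invariant under relabelings of the variables, so for every fixed $\pi$ the formula $\PHI^{\pi}$ has the same law as $\PHI$. Taking $\pi$ uniform and independent of $\PHI$, the random variable $\mathrm{success}(\PHI^{\pi})$ therefore has the same distribution as $\mathrm{success}(\PHI)$.

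Finally, suppose $\bar\beta_\PHI(S(\PHI))\leq\exp(-\alpha n)$ whp, where $\alpha>0$ is the implicit $\Omega(1)$ constant from the hypothesis. Condition on a realization of $\PHI$ in this event. Applied to the non-negative random variable $\mathrm{success}(\PHI^{\pi})$ over uniform $\pi$, Markov's inequality gives
$$\pr_\pi\brk{\mathrm{success}(\PHI^{\pi})>\exp(-\alpha n/2)}\leq\exp(\alpha n/2)\cdot\bar\beta_\PHI(S(\PHI))\leq\exp(-\alpha n/2).$$
Combining with the whp hypothesis, $\mathrm{success}(\PHI^{\pi})\leq\exp(-\alpha n/2)$ holds with probability $1-o(1)$ over the joint draw of $(\PHI,\pi)$. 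Since $\PHI^{\pi}$ and $\PHI$ are equal in law, the same bound transfers to $\mathrm{success}(\PHI)$ whp, which is the desired conclusion.

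The only point requiring a sanity check is that the hypothesis is quantitative enough (exponential, not merely polynomial) to survive the Markov step; since it delivers $\exp(-\Omega(n))$, halving the exponent is harmless and there is no serious obstacle. The statement is essentially a symmetrization lemma, and the proof reduces to relabeling variables, permutation-invariance of the random formula model, and one application of Markov's inequality.
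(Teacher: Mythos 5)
Your proof is correct and matches the argument the paper intends (the paper states the Fact without a written proof, pointing only to permutation invariance of the distribution of $\PHI$). The identity $\bar\beta_\Phi(S(\Phi))=\Erw_\pi\brk{\mathrm{success}(\Phi^\pi)}$, the observation $\PHI^\pi\stackrel{d}{=}\PHI$, and the Markov step are exactly the natural way to make that one-line remark rigorous; losing a factor of two in the exponent is harmless since the conclusion only asks for $\exp(-\Omega(n))$.
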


Let $\Phi$ be a $k$-CNF and let $\delta>0$. 
Given a permutation $\pi$ and a partial assignment $\sigma:\cbc{x_{\pi(s)}:s\leq t}\ra\cbc{-1,1}$, we let
$\Phi_{t,\pi,\sigma}$ denote the formula obtained from $\Phi$ by substituting the values $\sigma(x_{\pi(s)})$
for the variables $x_{\pi(s)}$ for $1\leq s\leq t$ and simplifying.
Formally, $\Phi_{t,\pi,\sigma}$ is obtained from $\Phi$ as follows:
\begin{enumerate}
\item[$\bullet$] remove all clauses $a$ of $\Phi$ that contain a variable $x_{\pi(s)}$ with $1\leq s\leq t$ such that
		$\sigma(x_{\pi(s)})=\sign(x_{\pi(s)},a)$.
\item[$\bullet$] for all clauses $a$ that contain a $x_{\pi(s)}$ with $1\leq s\leq t$ such that
		$\sigma(x_{\pi(s)})\neq\sign(x_{\pi(s)},a)$, remove $x_{\pi(s)}$ from $a$.
\item[$\bullet$] remove any empty clauses (resulting from clauses of $\Phi$
			that become unsatisfied if we set 
			$x_{\pi(s)}$ to 
		$\sigma(x_{\pi(s)})$ for $1\leq s\leq t$)
		from the formula.
\end{enumerate}

For a number $\delta>0$ and an index $l> t$ we say that $x_{\pi(l)}$ is {\bf\em $(\delta,t)$-biased} if 
	$$\abs{\mu_{x_{\pi(l)}}^{\brk\omega}(\Phi_{t,\pi,\sigma})-1/2}>\delta.$$
Moreover, the triple $(\Phi,\pi,\sigma)$ is {\bf\em $(\delta,t)$-balanced} if no more than
$\delta(n-t)$ variables are $(\delta,t)$-biased.

Let $\pi$ be the permutation chosen by {\tt PermBPdec}$(\Phi)$, and let $\sigma$ be the partial assignment
constructed in the first $t$ steps.
The variable $x_{\pi(t+1)}$ is uniformly distributed over the set $V\setminus\cbc{x_{\pi(s)}:s\leq t}$ of currently unassigned variables.
Hence, if  $(\Phi,\pi,\sigma)$ is $(\delta,t)$-balanced,
then the probability that $x_{\pi(t+1)}$ is $(\delta,t)$-biased
is bounded by $\delta$.
(This conclusion was the purpose of decimating the variables in a random order.)
Furthermore, given that $x_{\pi(t+1)}$ is not $(\delta,t)$-biased, 
the probability that {\tt PermBPdec}  will assign set it to `true' lies in the interval
$\brk{\frac12-\delta,\frac12+\delta}$.
Consequently,
	\begin{equation}\label{eqbalanced}
	\abs{\frac12-\pr\brk{\sigma(x_{\pi(t+1)})=1|\mbox{$(\Phi,\pi,\sigma)$ is $(\delta,t)$-balanced}}}\leq2\delta.
	\end{equation}
Thus, the smaller $\delta$, the closer $\sigma(x_{\pi(t+1)})$ comes to being uniformly distributed.
Hence, if $(\delta,t)$-balancedness holds for all $t$ with a `small' $\delta$, then $\bar\beta_\Phi$
will be close to the uniform distribution on $\cbc{-1,1}^V$.

To put this observation to work, we define
	\begin{equation}\label{eqdeltat}
	\delta_t=\exp(-c(1-t/n)k)\quad\mbox{ and }\quad \hat t=\bc{1-\frac{\ln(kr/2^k)}{c^2k}}n,
	\end{equation}
where $c>0$ is a small enough absolute constant
	\footnote{Setting $c=10^{-10^{10}}$ will evidently suffice, but no attempt at finding the optimal $c$ has been made.}.
In addition, we let
	\begin{equation}\label{eqDeltat}
	\Delta_t=\sum_{s=1}^t\delta_t.
	\end{equation}
\begin{lemma}\label{Lemma_Deltat}
For any $0\leq t\leq \hat t$ we have
	$$\Delta_t\leq(1+o(1))\frac{n}{ck\exp(ck(1-t/n))}.$$
Furthermore, $\Delta_{\hat t}\sim \frac{n}{ck}\brk{(kr/2^k)^{-c}-\exp(-ck)}$.
\end{lemma}
\begin{proof}
We have
	\begin{eqnarray}\label{eqLemmaDeltat1}
	\Delta_t&=&\sum_{s=1}^t\delta_s=\exp(-ck)\sum_{s=1}^t\exp(csk/n)=\exp(-ck)\brk{\frac{\exp(ck(t+1)/n)-1}{\exp(ck/n)-1}-1}.
	\end{eqnarray}
Since $\exp(ck/n)=1+ck/n+O(n^{-2})$ and ${\hat t}=\Omega(n)$, we obtain from~(\ref{eqLemmaDeltat1})
	\begin{eqnarray*}
	\Delta_{\hat t}&\sim&\frac{n}{ck}\brk{\exp\bc{ck\bc{\frac{{\hat t}}n-1}}-\exp(-ck)}=\frac{n}{ck}\brk{(kr/2^k)^{-c}-\exp(-ck)}.
	\end{eqnarray*}
Furthermore, for $1\leq t\leq {\hat t}$ equation (\ref{eqLemmaDeltat1}) yields the upper bound
	\begin{eqnarray*}
	\Delta_t&\leq&\exp(-ck)\cdot\frac{\exp(ck(t+1)/n)}{\exp(ck/n)-1}=\frac{\exp(ck(t-n)/n)}{1-\exp(-ck/n)}\\
		&\sim&\frac{n}{ck}\exp(-ck(1-t/n)),
	\end{eqnarray*}
as $\exp(-ck/n)=1-ck/n+O(n^{-2})$.
\qed\end{proof}

For $\xi>0$ we say that $\Phi$ is {\bf\em $(t,\xi)$-uniform} if 
 	$$\abs{\cbc{(\pi,\sigma)\in S_n\times\cbc{-1,1}^V:\mbox{$(\Phi,\pi,\sigma)$ is not $(\delta_t,t)$-balanced}}}\leq2^nn!\cdot\exp\brk{-10(\xi n+\Delta_t)}.$$
Proceeding by induction on $t$, we are going to use~(\ref{eqbalanced}) to relate the distribution $\bar\beta_\Phi$ to the
uniform distribution on $\cbc{-1,1}^V$ for $(t,\xi)$-uniform formulas.
More precisely, in \Sec~\ref{Sec_uniform} we are going to prove

\begin{proposition}\label{Prop_uniform}
Suppose that 
$\Phi$ is $(t,\xi)$-uniform for all $0\leq t\leq {\hat t}$.
Then
	\begin{equation}\label{eqProp_uniform}
	\bar\beta_\Phi(\cE)\leq\frac{|\cE|}{2^{\hat t}}\cdot\exp\brk{4\Delta_{\hat t}}+\exp(-\xi n/2)
		\quad\mbox{for any $\cE\subset\cbc{-1,1}^V$}.
	\end{equation}
\end{proposition}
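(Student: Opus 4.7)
The plan is to split $\bar\beta_\Phi(\cE)$ according to whether the run of PermBPdec stays balanced throughout the first $T$ decimation steps. For $0\le t\le T-1$ let $A_t$ denote the event that the partial state $(\Phi,\pi,\sigma|_{\{x_{\pi(1)},\ldots,x_{\pi(t)}\}})$ is $(\delta_t,t)$-balanced, and set $A^*=\bigcap_{t=0}^{T-1}A_t$. Then
$$\bar\beta_\Phi(\cE)\;\le\;\bar\beta_\Phi(\cE\cap A^*)\;+\;\bar\beta_\Phi(\neg A^*),$$
and the strategy is to bound these two summands separately against the two terms on the right of~(3.2).

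For the first summand, I would combine the uniformity of $\pi$ with the per-step bias bound~(3.1): given the history at step $t$, on $A_t$ the conditional probability that PermBPdec sets $\sigma(x_{\pi(t+1)})=1$ lies in $[\tfrac12-2\delta_t,\tfrac12+2\delta_t]$. Iterating, for any fixed $\pi\in S_n$ and $\sigma^\ast\in\{0,1\}^V$ the probability that PermBPdec draws permutation $\pi$, outputs $\sigma^\ast$ and stays in $A^*$ throughout is at most
$$\frac{1}{n!}\prod_{t=0}^{T-1}\bigl(\tfrac12+2\delta_t\bigr)\;\le\;\frac{1}{n!\cdot 2^T}\exp\Bigl(4\sum_{t=1}^T\delta_t\Bigr),$$
after absorbing the bounded index shift between the two ranges of summation. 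Summing over $\sigma^\ast\in\cE$ and $\pi\in S_n$ then yields the first target bound $|\cE|/2^T\cdot\exp(4\sum_t\delta_t)$ on $\bar\beta_\Phi(\cE\cap A^*)$.

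For the second summand, decompose $\neg A^*=\bigcup_{t=0}^{T-1}F_t$, where $F_t=\neg A_t\cap A_0\cap\cdots\cap A_{t-1}$ is the event that balancedness first fails at time $t$. Since $F_t$ depends only on $(\pi,\sigma|_{\{x_{\pi(1)},\ldots,x_{\pi(t)}\}})$, and since $A_0,\ldots,A_{t-1}$ hold on $F_t$, the same iterated estimate supplies a per-partial-state probability of at most $(n!)^{-1}\cdot 2^{-t}\exp(4\sum_{s=1}^t\delta_s)$ under the PermBPdec dynamics. Meanwhile the $(t,\xi)$-uniformity of $\Phi$ bounds the number of bad triples $(\pi,\sigma)\in S_n\times\{0,1\}^V$ by $2^n n!\exp(-\xi n-10\sum_{s=1}^t\delta_s)$, so at most $2^t n!\exp(-\xi n-10\sum_{s=1}^t\delta_s)$ partial states in $S_n\times\{0,1\}^t$ can lie in $F_t$. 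Multiplying the two bounds yields $\bar\beta_\Phi(F_t)\le\exp(-\xi n-6\sum_{s=1}^t\delta_s)$, and a union bound over $t<T\le n$ gives $\bar\beta_\Phi(\neg A^*)\le n\exp(-\xi n)\le\exp(-\xi n/2)$ for $n$ large.

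The main obstacle I expect is the conditioning bookkeeping: one has to verify that the per-step bias estimate really applies along every sample path kept in the cascading events $A_0\cap\cdots\cap A_t$ (which relies on the sequential, Markov-like structure of PermBPdec), and one has to extract the precise count of bad partial states in $S_n\times\{0,1\}^t$ from the whole-assignment count promised by $(t,\xi)$-uniformity. Once these two pieces are in place, combining the two summand estimates delivers~(3.2).
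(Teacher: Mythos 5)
Your overall architecture is natural, but the central per-path estimate is wrong as stated, and fixing it requires an idea that is absent from the proposal. You claim that for any \emph{fixed} $\pi$ and $\sigma^*$, the probability that \texttt{PermBPdec} produces $(\pi,\sigma^*)$ while staying in $A^*$ is at most $\frac{1}{n!}\prod_{t<T}(\frac12+2\delta_t)$, citing the per-step bound~(\ref{eqbalanced}). But (\ref{eqbalanced}) only bounds the conditional probability that $\sigma(x_{\pi(t+1)})=1$ \emph{after averaging over the uniformly random choice of $\pi(t+1)$} among the unassigned variables: balancedness merely guarantees that at most a $\delta_t$-fraction of candidates are biased, so most choices of $\pi(t+1)$ give a marginal near $\frac12$. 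Once you fix $\pi$, however, the variable $x_{\pi(t+1)}$ is determined, and nothing prevents it from being one of the biased ones; in that case the conditional probability of hitting the value $\sigma^*(x_{\pi(t+1)})$ can be close to $1$, not $\frac12+2\delta_t$. Thus the per-path product can exceed your claimed bound by a factor as large as $2^{A}$, where $A$ is the number of steps $t\le T$ at which the selected variable is $(\delta_t,t)$-biased despite the state being balanced. The same flaw propagates into your estimate of $\bar\beta_\Phi(F_t)$, since there too you multiply a count of partial states by the same unfounded per-path probability.

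This is exactly the gap the paper's proof is structured around. It introduces the random variable $A(\pi,\sigma)=\sum_{t\le T}A_t$ counting the biased-but-balanced steps, proves the comparison $\lambda'\lesssim\lambda''\cdot 2^{A}\prod(1+2\delta_t)$ against the uniform distribution (equation~(\ref{eqlambda'lambda''})), and then shows via stochastic domination by independent $\mathrm{Bernoulli}(\delta_t)$'s and Chernoff that $A\le 4(\Sigma_T+\xi n)$ with probability $1-\exp(-\xi n)$, so the stray $2^{A}$ factor can be absorbed at the cost of a benign $\exp(O(\Sigma_T+\xi n))$. Without that mechanism your ``iterated'' bound is simply false path-by-path, and the argument does not close. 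Your instinct that ``the conditioning bookkeeping'' is the obstacle is pointing in the right direction, but the resolution is not bookkeeping — it is the extra $2^{A}$ accounting together with the large-deviation control of $A$. On the other hand, your treatment of $\bar\beta_\Phi(\neg A^*)$ via first-failure decomposition and the $(t,\xi)$-uniformity count is a reasonable alternative to the paper's comparison of $\lambda'(\cB_t)$ with $\lambda''(\cB_t)$, and could be kept once the per-path bound is corrected to include the $2^A$ factor.
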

\Prop~\ref{Prop_uniform} reduces the proof of \Thm~\ref{Thm_main}
to showing that $\PHI$ is $(t,\xi)$-uniform with some appropriate probability.

To prove this, we need two simple definitions.
We call a clause $a$ of a formula $\Phi$ {\bf\em redundant} if $\Phi$ has another clause $b$ such that $a,b$ have at least two variables in common.
Furthermore, we call the formula $\Phi$ {\bf\em tame} if
\begin{enumerate}
\item[i.] $\Phi$ has no more than $\ln n$ redundant clauses, and
\item[ii.] no more than $\ln n$ variables occur in more than $\ln n$ clauses of $\Phi$.
\end{enumerate}
The following is a well-known fact.

\begin{lemma}\label{Lemma_tame}
The random formula $\PHI$ is tame \whp
\end{lemma}

Now, the following result provides the key estimate
for proving that $\PHI$ is $(t,\xi)$-uniform with a very high probability.

\begin{theorem}\label{Thm_bias}
There is a constant $\rho_0>0$ such that for any $k,r$ satisfying
	$2^k\rho_0/k\leq r\leq 2^k\ln 2$ 
there is $\xi=\xi(k,r)>0$ so that for $n$ large enough the following holds.
Fix any permutation $\pi$ of $\brk n$ and any assignment $\sigma\in\cbc{-1,1}^V$.
Then for any $0\leq t\leq {\hat t}$ we have
	\begin{equation}\label{eqThmbias}
	\pr\brk{(\PHI,\pi,\sigma)\mbox{ is $(\delta_t,t)$-balanced}|\PHI\mbox{ is tame}}\geq
		1-\exp\brk{-3\xi n-10\Delta_t}.
	\end{equation}
\end{theorem}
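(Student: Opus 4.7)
The plan is to reduce the probabilistic statement to a structural claim about the decimated random formula and then extract the exponential deviation via concentration. First, the distribution of $\PHI$ is invariant under permuting variable names and under independently negating all occurrences of any variable. Consequently $\pr\brk{(\PHI,\pi,\sigma)\text{ is }(\delta_t,t)\text{-balanced}}$ does not depend on $(\pi,\sigma)$, and I may assume $\pi=\id$ and $\sigma\equiv\vecone$. The decimated formula $\PHI^\star=\PHI_{t,\id,\vecone}$ is then obtained by substituting ``true'' for $x_1,\ldots,x_t$: each original clause survives iff none of its positive literals is indexed by $\{1,\ldots,t\}$, dropping its negative literals on that set. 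A standard Chernoff calculation shows that $\PHI^\star$ is a random mixed-length CNF on $V_t$ whose clause-length distribution, variable-degree sequence, and local tree-likeness within radius $2\omega$ concentrate sharply around explicit functions of $k,r,t/n$ with error probability $\exp(-\Omega(n))$. These quasirandomness properties---to be established in the next section---fail with probability that sits comfortably inside the target $\exp(-2\xi n)$.

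Second, the core of the argument is an analysis of BP on $\PHI^\star$ under the quasirandomness event. Starting from $\mu^{[0]}=\tfrac12\vecone$ and iterating~(\ref{eqBPai})--(\ref{eqBPia}), I would track at each level $\ell\leq\omega$ the empirical distribution of the bias $\mu^{[\ell]}_{x\to a}(1)-\tfrac12$ over directed edges of the factor graph. The key quantitative claim, to be proved with the dynamics of \Sec~\ref{Sec_dynamics}, is that on this event the fraction of edges whose bias exceeds $\delta_t/4$ remains below $\delta_t/10$ uniformly in $\ell$. This reflects a contraction of bias in~(\ref{eqBPai}): a length-$j$ clause with incoming messages near $1/2$ injects only bias of order $2^{-j}$ into $\mu_{a\to x}$, while ``forcing'' arises only through short clauses whose density is controlled throughout $t\leq T$ when $r\in[\rho_0 2^k/k,\,2^k\ln 2]$. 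Translating from messages to marginals via~(\ref{eqBPmarginal}) yields that the expected number of $(\delta_t,t)$-biased variables is at most $\tfrac12\delta_t(n-t)$.

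Third, to upgrade this first-moment estimate to the required deviation bound I would use that each marginal $\mu_{x_l}(\PHI^\star,\omega)$ is determined by the subformula within factor-graph distance $2\omega$ of $x_l$, so resampling a single clause alters at most polynomially many marginals on the quasirandomness event. A McDiarmid-type inequality applied to $\sum_{l>t}\mathbf{1}\{x_l\text{ is }(\delta_t,t)\text{-biased}\}$, combined with the first-moment bound, gives deviation probability $\exp(-\Omega(\delta_t n))$. Careful bookkeeping of the $10\sum_{s\leq t}\delta_s$ term---which reflects how errors accumulate per decimation step in the inductive framework of \Prop~\ref{Prop_uniform}---then yields the exact form of~(\ref{eqThmbias}).

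\textbf{The main obstacle} is the BP dynamics step: as $t$ grows toward $T=(1-r/2^k)n$, decimation progressively concentrates mass on short surviving clauses and the contraction in~(\ref{eqBPai}) weakens, so one must verify that bias still fails to cascade across $\omega$ iterations for every admissible $\omega=\omega(n)\geq 0$. The constant $\rho_0$ must be chosen large enough to keep the density window safely away from the regime of short-clause forcing (where BP would be pushed to a rigid fixed point and genuine marginal bias would emerge), yet small enough that the consequence in \Prop~\ref{Prop_uniform} remains non-trivial. Striking this balance is the crux of the proof and motivates the intricate quantitative analysis in the sequel.
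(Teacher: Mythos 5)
Your high-level frame (symmetry reduction to $\pi=\id$, $\sigma=\vecone$, then a quasirandomness event that controls the BP dynamics) matches the paper's strategy, but there is a genuine gap in the third step, and it points to the central idea you are missing. The paper (\Prop~\ref{Prop_reasonable} together with \Thm~\ref{Thm_dynamics}) establishes that the quasirandomness properties {\bf Q0}--{\bf Q4} imply $(\delta_t,t)$-balancedness \emph{deterministically}: once $\Phi$ is $(\delta_t,t)$-quasirandom, the number of $(\delta_t,t)$-biased variables is bounded by $\delta_t(n-t)$ for every iteration depth $\ell$, with no remaining randomness to concentrate. You instead propose a first-moment bound ($\tfrac12\delta_t(n-t)$ in expectation) followed by a McDiarmid inequality. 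This last step fails: the claim that resampling a single clause ``alters at most polynomially many marginals'' is only true when $\omega\ll\ln n$, but \Thm~\ref{Thm_main} must hold for \emph{arbitrary} $\omega=\omega(n)\geq 0$, and for $\omega=\Omega(\ln n)$ the factor graph has diameter $O(\ln n)$, so one clause can influence every marginal and the bounded-difference constant degenerates to $\Theta(n)$. Moreover, once you condition on the quasirandomness event the natural object is not an expectation at all---the BP recursion is a deterministic function of $\Phi$---and applying McDiarmid after conditioning on a complicated structural event requires additional justification you have not supplied.

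A secondary but related problem is that the quasirandomness properties you list---clause-length distribution, degree sequence, and ``local tree-likeness within radius $2\omega$''---are not the right ones. Tree-likeness at radius $2\omega$ does not hold at all for $\omega=\Omega(\ln n)$, and even at small $\omega$ it fails with inverse-polynomial, not exponentially small, probability, which would wreck the $\exp(-2\xi n-10\sum_s\delta_s)$ target. The properties that actually carry the argument are the weighted sign-cancellation and discrepancy conditions {\bf Q2} and {\bf Q4} of Figure~\ref{Fig_reasonable}; in particular the cut-norm bound on the operator $\Lambda_Q$ is what lets the paper show that the accumulated bias $\xi_{x\ra a}$ in the linearization $L_{x\ra a}$ of $\ln P_{\leq1}(x\ra a)$ stays small for all but $O(\delta_t(n-t))$ variables, uniformly over $\ell$. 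This spectral/discrepancy ingredient is the crux that replaces both your first-moment estimate and your concentration step, and it is precisely what makes the bound deterministic and $\omega$-independent.
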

We defer the proof of \Thm~\ref{Thm_bias} to \Sec~\ref{Sec_tracing}.

\begin{corollary}\label{Cor_bias}
In the notation of \Thm~\ref{Thm_bias}, 
	$$\pr\brk{\forall t\leq {\hat t}:\PHI\mbox{ is $(t,\xi)$-uniform}|\PHI\mbox{ is tame}}\geq1-\exp(-\xi n).$$
\end{corollary}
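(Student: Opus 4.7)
The plan is a routine first-moment (Markov) argument, riding on the fact that \Thm~\ref{Thm_bias} already gives an exponentially small failure probability for each \emph{fixed} pair $(\pi,\sigma)$, with plenty of slack on the exponent (factor $2\xi n$ as opposed to $\xi n$). The slack is precisely what will let a union bound over $t\leq T$ go through.

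Concretely, for each $0\leq t\leq T$ introduce the random variable
	$$X_t=\abs{\cbc{(\pi,\sigma)\in S_n\times\cbc{0,1}^V:\ (\PHI,\pi,\sigma)\text{ is not $(\delta_t,t)$-balanced}}}.$$
By linearity of expectation and \Thm~\ref{Thm_bias},
	$$\Exp\brk{X_t}=\sum_{\pi,\sigma}\pr\brk{(\PHI,\pi,\sigma)\text{ is not $(\delta_t,t)$-balanced}}\leq 2^n n!\cdot\exp\brk{-2\xi n-10\sum_{s=1}^t\delta_s}.$$
Recalling that $\Phi$ is $(t,\xi)$-uniform iff $X_t\leq 2^n n!\exp[-\xi n-10\sum_{s\leq t}\delta_s]$, Markov's inequality immediately yields
	$$\pr\brk{\PHI\text{ is not $(t,\xi)$-uniform}}\leq\frac{\Exp[X_t]}{2^n n!\exp[-\xi n-10\sum_{s\leq t}\delta_s]}\leq\exp\brk{-\xi n}.$$

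It remains to union-bound over $t$. Since $T\leq n$, we get
	$$\pr\brk{\exists t\leq T:\PHI\text{ is not $(t,\xi)$-uniform}}\leq(T+1)\exp(-\xi n).$$
The polynomial prefactor $T+1$ is absorbed into the exponential using the factor-of-two slack supplied by \Thm~\ref{Thm_bias} (equivalently, by shrinking the constant $\xi$ in the corollary by an arbitrarily small amount relative to the one furnished by \Thm~\ref{Thm_bias}), which gives the stated bound $1-\exp(-\xi n)$ for $n$ large enough.

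There is no real obstacle here; the entire content of the corollary is packed into \Thm~\ref{Thm_bias}, and the only thing to watch is that the gap between the $2\xi n$ appearing in the hypothesis and the $\xi n$ appearing in the definition of $(t,\xi)$-uniformity is large enough to swallow both the $2^n n!$ normalisation (which cancels exactly) and the $T+1\leq n+1$ union-bound factor (which is trivially absorbed into the exponential).
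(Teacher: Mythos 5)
Your proposal is correct and follows essentially the same route as the paper: define $X_t$ as the number of pairs $(\pi,\sigma)$ for which $(\PHI,\pi,\sigma)$ fails to be $(\delta_t,t)$-balanced, bound $\Exp[X_t]$ via linearity of expectation and \Thm~\ref{Thm_bias}, apply Markov against the threshold appearing in the definition of $(t,\xi)$-uniformity, and finish with a union bound over $t\leq T$. Your remark about absorbing the polynomial prefactor $T+1$ (by slightly shrinking $\xi$, or equivalently by relying on the slack between the $2\xi n$ of the theorem and the $\xi n$ of the definition) is exactly the right bookkeeping observation and matches what the paper does implicitly.
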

\begin{proof}
For $1\leq t\leq{\hat t}$ and a $k$-CNF $\Phi$ we let $X_t(\Phi)$ signify the number of pairs $(\pi,\sigma)\in S_n\times\cbc{-1,1}^V$
such that $(\Phi,\pi,\sigma)$ fails to be $(\delta_t,t)$-balanced.
Then \Thm~\ref{Thm_bias} yields
	$$\Erw\brk{X_t(\PHI)|\PHI\mbox{ is tame}}\leq 2^nn!\cdot\exp(-3\xi n-10\Delta_t).$$
Hence, by Markov's inequality and the union bound
	\begin{equation}\label{eqCorbias1}
	\pr\brk{\exists t\leq {\hat t}:X_t(\PHI)>2^nn!\cdot\exp(-\xi-10\Delta_t)|\PHI\mbox{ is tame}}\leq n\exp(-2\xi n)\leq\exp(-\xi n).
	\end{equation}
Since $\PHI$ is $(t,\xi)$-uniform if $X_t(\Phi)\leq2^nn!\cdot\exp(-\xi n-10\Delta_t)$, 
the assertion follows from (\ref{eqCorbias1}).
\qed\end{proof}

\medskip
\noindent\emph{Proof of \Thm~\ref{Thm_main}.}
Let us keep the notation of \Thm~\ref{Thm_bias}.
By \Lem~\ref{Lemma_tame} we may condition on $\PHI$ being tame.
Let $\cU$ be the event that $\PHI$ is $(t,\xi)$-uniform for all $1\leq t\leq {\hat t}$.
Let $\cS$ be the event that $|\cS(\PHI)|\leq n\cdot\Erw|\cS(\PHI)|$.
By \Cor~\ref{Cor_bias} and Markov's inequality, we have $\PHI\in\cU\cap\cS$ \whp\
If $\PHI\in\cU\cap\cS$, then by \Prop~\ref{Prop_uniform}
	\begin{eqnarray}\nonumber
	\bar\beta_{\PHI}(\cS(\PHI))&\leq&\frac{\abs{\cS(\PHI)}}{2^{\hat t}}\cdot\exp\brk{4\Delta_{\hat t}}+\exp(-\xi n/2)\\
		&\leq&n\cdot\Erw|\cS(\PHI)|\cdot2^{-{\hat t}}\exp\brk{4\Delta_{\hat t}}+\exp(-\xi n/2).\label{Xeqmain1}
	\end{eqnarray}
By \Lem s~\ref{Lemma_expected} and~\ref{Lemma_Deltat}
we have $\Erw|\cS(\PHI)|\leq2^n\exp(-rn/2^k)$
and $\Delta_{\hat t}\leq\frac{n}{ck}(kr/2^k)^{-c}$. 
Plugging these estimates and the definition~(\ref{eqdeltat}) of ${\hat t}$ into~(\ref{Xeqmain1}), 
we find that given $\PHI\in\cU\cap\cS$,
	\begin{eqnarray*}
	\bar\beta_{\PHI}(\cS(\PHI))&\leq&
		n\exp\brk{n\bc{-\frac{r}{2^k}+\frac{\ln(kr/2^k)}{c^2k}+\frac{4}{ck}(kr/2^k)^{-c}}}+\exp(-\xi n/2).
	\end{eqnarray*}
Recalling that $\rho=kr/2^k$, we thus obtain
	\begin{eqnarray}\label{Xeqmain2}
	\bar\beta_{\PHI}(\cS(\PHI))&\leq&
		n\exp\brk{-\frac nk\bc{\rho-\frac{\ln2\ln\rho}{c^2}-\frac{4}{c\rho^{c}}}}+\exp(-\xi n/2).
	\end{eqnarray}
Hence, if $\rho\geq\rho_0$ for a sufficiently large constant $\rho_0>0$, then
(\ref{Xeqmain2}) yields $\bar\beta_{\PHI}(\cS(\PHI))=\exp(-\Omega(n))$.
Finally, \Thm~\ref{Thm_main} follows from Fact~\ref{Fact_pi}.
\qed

\subsection{Proof of \Prop~\ref{Prop_uniform}}\label{Sec_uniform}

We consider an additional variant of {\tt BPdec} that receives the order $\pi$ in which
variables are to be decimated as an input parameter.

\noindent{
\begin{algorithm}\label{Alg_BPdecsigma}\upshape\texttt{BPdec$(\Phi,\pi)$}\\\sloppy
\emph{Input:} A $k$-SAT formula $\Phi$ on $V=\cbc{x_1,\ldots,x_n}$ and a permutation $\pi\in S_n$.\\
\emph{Output:} An assignment $\tau:V\rightarrow\cbc{-1,1}$.
\begin{tabbing}
mmm\=mm\=mm\=mm\=mm\=mm\=mm\=mm\=mm\=\kill
{\algstyle 0.}	\> \parbox[t]{40em}{\algstyle
	Let $\Phi_0=\Phi$.}\\
{\algstyle 1.}	\> \parbox[t]{40em}{\algstyle
	For $t=0,\ldots,n-1$ do}\\
{\algstyle 2.}	\> \> \parbox[t]{38em}{\algstyle
		Compute the BP results $\mu_x^{\brk\omega}(\Phi_{t})$.}\\
{\algstyle 3.}	\> \> \parbox[t]{38em}{\algstyle
		Let 
				$$\sigma(x_{\pi(t+1)})=\left\{\begin{array}{cl}
							1&\mbox{ with probability $\mu_{x_{\pi(t+1)}}^{\brk\omega}(\Phi_{t})$,}\\
							-1&\mbox{ with probability $1-\mu_{x_{\pi(t+1)}}^{\brk\omega}(\Phi_{t})$}.
							\end{array}\right.$$
		}\\
{\algstyle 4.}	\> \> \parbox[t]{38em}{\algstyle
		Obtain $\Phi_{t+1}$ from $\Phi_{t}$ by substituting the value $\sigma(x_{\pi(t+1)})$ for $x_{\pi(t+1)}$ and simplifying.}\\
{\algstyle 5.}	\> \parbox[t]{40em}{\algstyle
	Return the assignment $\sigma$.}
\end{tabbing}
\end{algorithm}}

Fix a $k$-CNF $\Phi$ that is $(t,\xi)$-uniform for all $1\leq t\leq{\hat t}$.
Let $S_n$ be the set of all permutations on $\brk n$.
Let $\lambda_\Phi$ be the probability distribution  on pairs $(\vec\pi,\vec\sigma)\in S_n\times\cbc{-1,1}^V$ induced by 
choosing a permutation $\vec\pi\in S_n$ uniformly at random and letting $\vec\sigma={\tt BPdec}(\Phi,\vec\pi)$.
Then $\bar\beta_\Phi$ is the $\vec\sigma$-marginal of $\lambda_\Phi$, i.e.,
	\begin{equation}\label{eqbetalambda}
	\bar\beta_\Phi(\cE)=\lambda_\Phi(S_n\times\cE)\quad\mbox{for any }\cE\subset\cbc{-1,1}^V.
	\end{equation}

In order to study $\lambda_\Phi$, we consider another distribution $\lambda_\Phi'$ on 
pairs $(\vec\pi,\vec\sigma')\in S_n\times\cbc{0,1}^V$ that is easier
to analyze and that will turn out to be `close' to $\lambda_\Phi$.
To define $\lambda_\Phi'$, let  $\cB_t$ be the set of all pairs $(\pi,\sigma)$ such that
$(\Phi,\pi,\sigma)$ is not $(\delta_t,t)$-balanced.
Moreover, let $\cB=\bigcup_{t=0}^T\cB_t$.
The distribution $\lambda_\Phi'$ is induced by choosing a permutation $\vec\pi$ uniformly at random 
and running the following algorithm on $\Phi,\vec\pi$.

\noindent{
\begin{algorithm}\label{Alg_BPdecsigma'}\upshape\texttt{BPdec$'(\Phi,\pi)$}\\\sloppy
\emph{Input:} A $k$-SAT formula $\Phi$ on $V=\cbc{x_1,\ldots,x_n}$ and a permutation $\pi\in S_n$.\\
\emph{Output:} An assignment $\vec\sigma':V\rightarrow\cbc{-1,1}$.
\begin{tabbing}
mmm\=mm\=mm\=mm\=mm\=mm\=mm\=mm\=mm\=\kill
{\algstyle 0.}	\> \parbox[t]{40em}{\algstyle
	Let $\Phi_0=\Phi$.}\\
{\algstyle 1.}	\> \parbox[t]{40em}{\algstyle
	For $t=0,\ldots,n-1$ do}\\
{\algstyle 2.}	\> \> \parbox[t]{38em}{\algstyle
		Compute the BP results $\mu_x^{\brk\omega}(\Phi_{t})$.}\\
{\algstyle 3.}	\> \> \parbox[t]{38em}{\algstyle
		If $(\Phi,\pi,\vec\sigma')$ is $(\delta_t,t)$-balanced, then
			}\\
\> \> \> \parbox[t]{36em}{\algstyle
			let 
				$$\vec\sigma'(x_{\pi(t+1)})=\left\{\begin{array}{cl}
							1&\mbox{ with probability $\mu_{x_{\pi(t+1)}}^{\brk\omega}(\Phi_{t})$,}\\
							-1&\mbox{ with probability $1-\mu_{x_{\pi(t+1)}}^{\brk\omega}(\Phi_{t})$}.
							\end{array}\right.$$
	}\\
 \> \> \parbox[t]{38em}{\algstyle else}\\
\> \> \> \parbox[t]{38em}{\algstyle 
		let $\vec\sigma'(x_{\pi(t)})=\zeta$ with probability $\frac12$ for $\zeta=\pm1$.}\\
{\algstyle 4.}	\> \> \parbox[t]{38em}{\algstyle
		Obtain $\Phi_{t+1}$ from $\Phi_{t}$ by substituting the value $\vec\sigma'(x_{\pi(t+1)})$ for $x_{\pi(t+1)}$ and simplifying.}\\
{\algstyle 5.}	\> \parbox[t]{40em}{\algstyle
	Output the assignment $\vec\sigma'$.}
\end{tabbing}
\end{algorithm}
}

Roughly speaking, ${\tt BPdec}'$ disregards the BP outcome if it strays too far from the `flat' vector $\frac12\vecone$.
We claim that $\lambda_\Phi$ and $\lambda_\Phi'$ are related as follows.
For $\cF\subset S_n\times\cbc{-1,1}^V$ let
	$$\cF_{\hat t}=\cbc{(\pi,\sigma)\in S_n\times\cbc{-1,1}^V:\exists (\pi^*,\sigma^*)\in\cF:\forall 1\leq t\leq {\hat t}:
			\pi^*(t)=\pi(t),\sigma^*(x_{\pi(t)})=\sigma(x_{\pi(t)})}.$$
Thus, $\cF_{\hat t}$ is the set of all $(\pi,\sigma)$ that coincide with some $(\pi^*,\sigma^*)\in\cF$ ``up to time $\hat t$''.
In particular, $\cF\subset\cF_{\hat t}$.

\begin{lemma}\label{Lemma_eqlambdalambda'}
For any $\cF\subset S_n\times\cbc{-1,1}^V$ we have
	$\lambda_\Phi(\cF)\leq\lambda_\Phi'(\cF_{\hat t})+\lambda_\Phi'(\cB).$
\end{lemma}
\begin{proof}
By construction, for any $(\pi,\sigma)\not\in\cB_t$ and any $\zeta\in\cbc{-1,1}$ we have
	\begin{eqnarray*}
		\lambda_\Phi\brk{\vec\sigma(x_{\pi(t+1)})=\zeta|\vec\pi=\pi\wedge\forall s\leq t:\vec\sigma(x_{\pi(s)})=\sigma(x_{\pi(s)})}\\
		&\hspace{-8cm}=&\;\hspace{-4cm}
			\lambda_\Phi'\brk{\vec\sigma'(x_{\pi(t+1)})=\zeta|\vec\pi=\pi\wedge\forall s\leq t:\vec\sigma'(x_{\pi(s)})=\sigma(x_{\pi(s)})}.
	\end{eqnarray*}
Hence, Bayes' rule yields that for any pair $(\pi,\sigma)\not\in\cB$,
	\begin{equation}\label{eqlambdalambda'1}
	\lambda_\Phi\brk{\forall t\leq {\hat t}:\vec\pi(t)=\pi(t)\wedge\vec\sigma(t)=\sigma(t)}=
		\lambda_\Phi'\brk{\forall t\leq {\hat t}:\vec\pi(t)=\pi(t)\wedge\vec\sigma'(t)=\sigma(t)}.
	\end{equation}
In particular, $\lambda_\Phi(\cB)=\lambda_\Phi'(\cB)$.
Hence, for any event $\cF$ we obtain
	\begin{eqnarray*}
	\lambda_\Phi(\cF)&\leq&
		\lambda_\Phi(\cF_{\hat t})\leq\lambda_\Phi(\cF_{\hat t}\setminus\cB)+\lambda_\Phi(\cB)
		\;\stacksign{(\ref{eqlambdalambda'1})}{=}\;\lambda_\Phi'(\cF_{\hat t}\setminus\cB)+\lambda_\Phi'(\cB)\leq\lambda_\Phi'(\cF_{\hat t})+\lambda_\Phi'(\cB),
	\end{eqnarray*}
as desired.
\qed\end{proof}
	
Let $\lambda''$ be the uniform probability distribution on $S_n\times\cbc{-1,1}^V$,
and let $(\vec\pi,\vec u)$ denote a pair chosen from $\lambda''$.
To relate $\lambda_\Phi'$ and $\lambda''$,
let $A_t(\pi,\sigma)$ be equal to one
	if $(\pi,\sigma)\not\in\cB_t$ and $x_{\pi(t)}$ is $(\delta_t,t)$-biased in $(\Phi,\pi,\sigma)$,
	and set $A_t(\pi,\sigma)=0$ otherwise.
In addition, let $A(\pi,\sigma)=\sum_{t\leq{\hat t}}A_t(\pi,\sigma)$.

\begin{lemma}\label{Lemma_eqlambda'lambda''}
For any pair $(\pi,\sigma)\in S_n\times\cbc{-1,1}^V$ we have
	\begin{eqnarray*}\nonumber
	\lambda_\Phi'\brk{\forall t\leq {\hat t}:\vec\pi(t)=\pi(t)\wedge\vec\sigma'(x_{\pi(t)})=\sigma(x_{\pi(t)})}\\
		&\hspace{-10cm}\leq&\hspace{-5cm}\;
			\lambda''\brk{\forall t\leq {\hat t}:\vec\pi(t)=\pi(t)\wedge\vec u(x_{\pi(t)})=\sigma(x_{\pi(t)})}
				\cdot2^{A(\pi,\sigma)}\prod_{t\leq T}1+2\delta_t.
		\label{eqlambda'lambda''}
	\end{eqnarray*}
\end{lemma}
\begin{proof}
Fix any pair $(\pi,\sigma)\in S_n\times\cbc{-1,1}^V$ and let
$\cL_t$ be the event that $$\vec\pi(t)=\pi(t)\mbox{ and }\vec\sigma'(x_{\pi(t)})=\sigma(x_{\pi(t)}).$$
Then for any $1\leq t\leq {\hat t}$
we can bound the conditional probability
	$\lambda_\Phi'\brk{\cL_t|\vec\pi(t)=\pi(t)\wedge\bigwedge_{s<t}\cL_s}$ as follows.
\begin{description}
\item[Case 1: $(\pi,\sigma)\in\cB_t$.] In this case $(\Phi,\pi,\sigma)$ is not $(\delta_t,t)$-balanced.
	Therefore, step~3 of {\tt BPdec'}
		chooses the value $\vec\sigma'(x_{\pi(t)})$ uniformly.
		Hence, the event $\vec\sigma'(x_{\pi(t)})=\sigma(x_{\pi(t)})$ occurs with probability $\frac12$.
\item[Case 2: $(\pi,\sigma)\not\in\cB_t$ and $A_t(\pi,\sigma)=0$.]
	Since $(\Phi,\pi,\sigma)$ is $(\delta_t,t)$-balanced, step~3 of {\tt BPdec'} uses the BP marginals $\mu_{x_{\pi(t)}}(\zeta)$ in order to
	assign $x_{\pi(t)}$.
	Because $A_t(\pi,\sigma)=0$, the variable $x_{\pi(t)}$ is not $(\delta_t,t)$-biased, whence
		$\mu_{x_{\pi(t)}}(\zeta)\leq\frac12+\delta_t$ for both $\zeta=-1$ and $\zeta=1$.
	Hence, the probability that $\vec\sigma'(x_{\pi(t)})=\sigma(x_{\pi(t)})$ is bounded by $\frac12+\delta_t$.
\item[Case 3: $A_t(\pi,\sigma)=1$.]
	In this case we just use the trivial fact that the probability of the event $\vec\sigma'(x_{\pi(t)})=\sigma(x_{\pi(t)})$
	is bounded by $1\leq2(\frac12+\delta_t)$.
\end{description}
In any case, we obtain the bound
	$\lambda_\Phi'\brk{\cL_t|\vec\pi(t)=\pi(t)\wedge\bigwedge_{s<t}\cL_s}\leq2^{A_t(\pi,\sigma)}(\frac12+\delta_t)$.
Consequently, as $\lambda''$ is the uniform distribution, we get
	\begin{equation}\label{eqMultiplyUp}
	\frac{\lambda_\Phi'\brk{\cL_t|\vec\pi(t)=\pi(t)\wedge\bigwedge_{s<t}\cL_s}}{\lambda''\brk{\cL_t|\vec\pi(t)=\pi(t)\wedge\bigwedge_{s<t}\cL_s}}
		\leq2^{A_t(\pi,\sigma)}(1+2\delta_t).
	\end{equation}
Multiplying (\ref{eqMultiplyUp}) up for $t\leq {\hat t}$ yields the assertion.
\qed\end{proof}

\noindent
To put \Lem~\ref{Lemma_eqlambda'lambda''} 
 to work, we need to estimate $A(\vec\pi,\vec\sigma')$.

\begin{lemma}\label{Lemma_eqA}
We have
	$\lambda_\Phi'\brk{A(\vec\pi,\vec\sigma')>4(\Delta_{\hat t}+\xi n)}\leq\exp(-\xi n).$
\end{lemma}
\begin{proof}
We are going to bound the probability that $A_t(\vec\pi,\vec\sigma')=1$ given the values
$\vec\pi(s)$, $\vec\sigma'(x_{\vec\pi(s)})$ for $1\leq s<t$.
\begin{description}
\item[Case 1: the event $\cB_t$ occurs.]
	Then $A_t=0$ by definition.
\item[Case 2: the event $\cB_t$ does not occur.]
	In this case $(\Phi,\pi,\sigma)$ is $(\delta_t,t)$-balanced, which means that
	no more than $\delta_t(n-t)$ variables are biased.
	Since the permutation $\vec\pi$ is chosen uniformly at random, the probability that $x_{\vec\pi(t)}$ is $(\delta_t,t)$-biased
	is bounded by $\delta_t$.
\end{description}
Thus, in either case the conditional probability of the event $A_t=1$ is bounded by $\delta_t$.
This implies that the random variable $A(\vec\pi,\vec\sigma')=\sum_{t\leq {\hat t}}A_t(\vec\pi,\vec\sigma')$ is stochastically dominated
by a sum of mutually independent Bernoulli variables with means $\delta_1,\ldots,\delta_{\hat t}$.
Therefore, the assertion follows from \Lem~\ref{Lemma_Chernoff} (the Chernoff bound).
\qed\end{proof}

\medskip
\noindent\emph{Proof of \Prop~\ref{Prop_uniform}.}
Combining \Lem s~\ref{Lemma_eqlambda'lambda''} and~\ref{Lemma_eqA}, we see that
	\begin{eqnarray}\nonumber
	\lambda_\Phi'\brk{\cF_{\hat t}}&\leq&\lambda_\Phi'\brk{A_{\hat t}(\vec\pi,\vec\sigma')>4(\Delta_{\hat t}+\xi n)}+\lambda_\Phi'\brk{\cF_{\hat t}
			\wedge A_{\hat t}\leq4(\Delta_{\hat t}+\xi n)}\\
		&\leq&\exp(-\xi n)+
			\lambda''\brk{\cF_{\hat t}}\cdot2^{4(\Delta_{\hat t}+\xi n)}\prod_{t\leq {\hat t}}1+2\delta_t\nonumber\\
		&\leq&\lambda''\brk{\cF_{\hat t}}\cdot\exp(6\Delta_{\hat t}+4\xi n)+\exp(-\xi n)
			\quad\mbox{ for any $\cF\subset S_n\times\cbc{-1,1}^V$}.
		\label{eqlambda'lambda''final}
	\end{eqnarray}
Our assumption that $\Phi$ is $(t,\xi)$-uniform ensures that
	$\lambda''\brk{\cB_t}\leq\exp(-10(\xi n+\Delta_{\hat t}))$ for any $t\leq {\hat t}$.
Together with~(\ref{eqlambda'lambda''final}), this implies that
	\begin{eqnarray*}
	\lambda_\Phi'\brk{\cB_t}&\leq&\lambda''\brk{\cB_t}\exp(6\Delta_{\hat t}+4\xi n)+\exp(-\xi n)\leq2\exp(-\xi n)\quad\mbox{ for any $t\leq {\hat t}$}.
	\end{eqnarray*}
Therefore, by the union bound
	\begin{equation}\label{eqB}
	\lambda_\Phi'\brk\cB\leq2{\hat t}\exp(-\xi n)\leq\exp(-0.9\xi n).
	\end{equation}
Finally, consider any $\cE\subset\cbc{-1,1}^V$.
Let $\cF=S_n\times\cE$.
Then
	\begin{eqnarray*}
	\bar\beta_\Phi(\cE)&=&\lambda_\Phi\brk{\cF}\qquad\qquad\qquad\qquad\qquad\qquad\qquad\qquad\qquad\mbox{[due to~(\ref{eqbetalambda})]}\\
		&\leq&\lambda_\Phi'\brk{\cF_{\hat t}}+\lambda_\Phi'\brk\cB\qquad\qquad\qquad\qquad\qquad\qquad\qquad\mbox{[by \Lem~\ref{Lemma_eqlambdalambda'}]}\\
		&\leq&\lambda_\Phi'\brk{\cF_{\hat t}}+\exp(-0.9\xi n)\qquad\quad\qquad\qquad\qquad\qquad\mbox{ [by~(\ref{eqB})]}\\
		&\leq&\lambda''\brk{\cF_{\hat t}}\exp(6(\Delta_{\hat t}+\xi n))+\exp(-\xi n/2)\ \qquad\qquad\,\mbox{[by~(\ref{eqlambda'lambda''final})]}\\
		&=&\frac{\abs{\cF_{\hat t}}}{n!2^n}\cdot\exp(6(\Delta_{\hat t}+\xi n))+\exp(-\xi n/2)
				\ \ \qquad\qquad\mbox{[as $\lambda''$ is uniform]}\\
		&=&\frac{\abs\cE}{2^{\hat t}}\cdot\exp(6(\Delta_{\hat t}+\xi n))+\exp(-\xi n/2)
			\quad\ \qquad\qquad\mbox{[by the definition of $\cF_{\hat t}$]},
	\end{eqnarray*}
as desired.
\qed

\section{Tracing the Belief Propagation operator}\label{Sec_tracing}

\subsection{Overview}\label{Sec_Overview}

The goal in this section (and the rest of the paper)
is to establish \Thm~\ref{Thm_bias}, which states that for any fixed permutation $\pi$ and any fixed assignment $\sigma$
the triple $(\PHI,\pi,\sigma)$ is $(\delta_t,t)$-balanced with probability very close to one.
The basic symmetry properties of the random formula $\PHI$ allow us to 
assume without loss of generality that $\pi=\id$ is the identity and that $\sigma=\vecone$ is the all-true assignment.
More precisely, we observe the following.

\begin{fact}
Fix any permutation $\pi$ of $\brk n$ and any assignment $\sigma\in\cbc{0,1}^V$.
Then for any $0\leq t\leq \hat t$ we have
	$$
	\pr\brk{(\PHI,\pi,\sigma)\mbox{ is $(\delta_t,t)$-balanced}}=\pr\brk{(\PHI,\id,\vecone)\mbox{ is $(\delta_t,t)$-balanced}}.$$
\end{fact}
\begin{proof}
For a $k$-CNF $\Phi$ let $\Phi^{\pi,\sigma}$ be the formula obtained by replacing
	\begin{enumerate}
	\item[$\bullet$] each occurrence of the literal $x_i$	
		in $\Phi$ by $x_{\pi(i)}$ if $\sigma(x_{\pi(i)})=1$, and by $\neg x_{\pi(i)}$ if $\sigma(x_{\pi(i)})=-1$, and
	\item[$\bullet$] each occurrence of the literal $\neg x_i$ in $\Phi$ 
		 by $\neg x_{\pi(i)}$ if $\sigma(x_{\pi(i)})=1$, and by $x_{\pi(i)}$ if $\sigma(x_{\pi(i)})=-1$.
	\end{enumerate}
Then $(\Phi,\id,\vecone)$ is $(\delta_t,t)$-balanced iff $(\Phi^{\pi,\sigma},\pi,\sigma)$ is.
Furthermore, the map $\Phi\mapsto\Phi^{\pi,\sigma}$ is a bijection.
Consequently, for the uniformly random formula $\PHI$ the resulting formula $\PHI^{\pi,\sigma}$ is uniformly random as well,
for any $\pi,\sigma$.
\qed\end{proof}

Thus, we assume from now on that $\pi=\id$ and $\sigma=\vecone$.
Then the decimated formula $\PHI_{t,\pi,\sigma}$ is simply obtained from $\PHI$  by substituting the
value `true' for $x_1,\ldots,x_t$ and simplifying.
To unclutter the notation, we are going to denote $\PHI_{t,\pi,\sigma}$ by $\PHI^t$ from now on.
Let $G$ be the factor graph of $\PHI^t$.

Our task is to study the BP operator defined in (\ref{eqBPai}) and~(\ref{eqBPoperator}) on $\PHI^t$.
That is, starting from the initial set of messages $\mu^{\brk 0}_{x\ra a}(\pm1)=\frac12$ for all $x\in V_t$, $a\in N(x)$, we define inductively for $\ell\geq0$
	\begin{eqnarray}\label{eqBPaiell}
	\mu_{a\ra x}^{\brk\ell}(\zeta)&=&\left\{\begin{array}{cl}
		1&\mbox{ if }\zeta=\sign(x,a),\\
		\displaystyle1-\hspace{-4mm}\prod_{y\in N(a)\setminus\cbc x}\hspace{-4mm}
				\mu_{y\ra a}^{\brk\ell}\bc{-\sign(y,a)}&\mbox{ if }\zeta=-\sign(x,a)
		\end{array}\right.
	\end{eqnarray}
and
	\begin{eqnarray}	
	\mu_{x\ra a}^{\brk{\ell+1}}(\zeta)&=&\BP(\mu^{\brk\ell})=\frac{\displaystyle\prod_{b\in N(x)\setminus\cbc a}\mu_{b\ra x}^{\brk\ell}(\zeta)}
			{\displaystyle\prod_{b\in N(x)\setminus\cbc a}\mu_{b\ra x}^{\brk\ell}(-1)+
			\prod_{b\in N(x)\setminus\cbc a}\mu_{b\ra x}^{\brk\ell}(1)},
			\label{eqBPiaell}
	\end{eqnarray}
unless the denominator equals zero, in which case $\mu_{x\ra a}^{\brk{\ell+1}}(\zeta)=\frac12$.

\subsubsection{A non-rigorous sketch of a rigorous analysis.}
Before launching into the details of the (long and technical) proof,
we are going to give a brief sketch based on heuristic considerations.
The aim of this is to develop some intuition. 
Roughly speaking, \Thm~\ref{Thm_bias} asserts that with probability very close to one,
	most of the messages $\mu_{x\ra a}^{\brk\ell}(\pm1)$ are close to $1/2$.
Hence, letting $$\Delta_{x\ra a}^{\brk\ell}=\mu_{x\ra a}^{\brk\ell}(1)-\frac12,$$ we aim to show that $|\Delta_{x\ra a}^{\brk\ell}|$ is small for most $x,a$.
The proof of this is by induction on $\ell$.
That is, given the $\Delta_{x\ra a}^{\brk{\ell}}$ we need to prove that the biases $\Delta_{x\ra a}^{\brk{\ell+1}}$ do not ``blow up''.
More precisely, let us denote by
	\begin{equation}\label{eqtheta}
	\theta=1-t/n
	\end{equation}
the fraction of unassigned variables.
Then our induction hypothesis is that  for all but $\delta_t\theta n$ variables we have
	$$\max_{a\in N(a)}|\Delta_{x\ra a}^{\brk{\ell}}|\leq\delta_t=\exp(-c\theta k),$$
and the goal is to show that the same holds true for $\ell+1$.
To establish this, we need to investigate one iteration  of the update rules~(\ref{eqBPaiell})--(\ref{eqBPiaell}).

Rewriting~(\ref{eqBPaiell}) in terms of the biases $\Delta_{y\ra a}^{\brk{\ell}}$, we obtain
	\begin{eqnarray}\nonumber
	\mu_{a\ra x}^{\brk\ell}(-\sign(x,a))&=&1-\hspace{-4mm}\prod_{y\in N(a)\setminus\cbc x}\hspace{-4mm}
				\frac12-\sign(y,a)\Delta_{y\ra a}^{\brk\ell}\\
				&=&1-2^{1-|N(a)|}\hspace{-4mm}\prod_{y\in N(a)\setminus\cbc x}\hspace{-4mm}
				1-2\sign(y,a)\Delta_{y\ra a}^{\brk\ell}.
				\label{eqheuristic1}
	\end{eqnarray}
How many factors do we expect the product in~(\ref{eqheuristic1}) to have?
In the undecimated formula $\PHI$, each clause has length $k$.
But in $\PHI^t$, only a $\theta$ fraction of variables remain unassigned.
Hence, the average length of a clause of $\PHI^t$ should be $\theta k$.
If indeed $|N(a)|\leq10\theta k$, say, and if $|\Delta_{y\ra a}^{\brk\ell}|\leq\delta_t=\exp(-c\theta k)$ for \emph{all} $y\in N(a)\setminus\cbc x$, then 
we can approximate~(\ref{eqheuristic1}) by
	\begin{eqnarray}
	\mu_{a\ra x}^{\brk\ell}(-\sign(x,a))&=&1-2^{1-|N(a)|}\hspace{-4mm}\prod_{y\in N(a)\setminus\cbc x}\hspace{-4mm}
				1-2\sign(y,a)\Delta_{y\ra a}^{\brk\ell}\nonumber\\
				&\approx&1-2^{1-|N(a)|}\exp\bc{-2\hspace{-4mm}\sum_{y\in N(a)\setminus\cbc x}\hspace{-4mm}\sign(y,a)\Delta_{y\ra a}^{\brk\ell}}\nonumber\\
				&\approx&1-2^{1-|N(a)|}\bc{1-2\hspace{-4mm}\sum_{y\in N(a)\setminus\cbc x}\hspace{-4mm}\sign(y,a)\Delta_{y\ra a}^{\brk\ell}}.
										\label{eqheuristic2}
	\end{eqnarray}
Assume, furthermore, that $a$ is ``not too short'' -- say, $|N(a)|\geq0.1\theta k$.
Then $2^{1-|N(a)|}\leq2^{1-0.1\theta k}$ is small, and thus the expression in~(\ref{eqheuristic2}) is close to $1$.
Hence, we can approximate it by
	\begin{eqnarray}						\label{eqheuristic3}
	\mu_{a\ra x}^{\brk\ell}(-\sign(x,a))&\approx&\exp\brk{-2^{1-|N(a)|}\bc{1-2\hspace{-4mm}
			\sum_{y\in N(a)\setminus\cbc x}\hspace{-4mm}\sign(y,a)\Delta_{y\ra a}^{\brk\ell}}}.
	\end{eqnarray}

To proceed, we are going to plug~(\ref{eqheuristic3}) into~(\ref{eqBPiaell}) to estimate $\Delta_{x\ra a}^{\brk{\ell+1}}$.
While it is easy enough to multiply the exponentials from~(\ref{eqheuristic3}) together to approximate the numerator of~(\ref{eqBPiaell}), 
the denominator seems a bit unwieldy.
To sidestep this issue, we simply estimate the \emph{ratio} $\mu_{x\ra a}^{\brk{\ell+1}}(1)/\mu_{x\ra a}^{\brk{\ell+1}}(-1)$
	(assuming that $\mu_{x\ra a}^{\brk{\ell+1}}(-1)>0$).
The denominator cancels.
Since $\mu_{x\ra a}^{\brk{\ell+1}}(1)+\mu_{x\ra a}^{\brk{\ell+1}}(-1)=1$ by construction, we see that
	$$\frac{\mu_{x\ra a}^{\brk{\ell+1}}(1)}{\mu_{x\ra a}^{\brk{\ell+1}}(-1)}
		=\frac{1+2\Delta_{x\ra a}^{\brk{\ell+1}}}{1-2\Delta_{x\ra a}^{\brk{\ell+1}}}.
		$$
Hence, to show that $\Delta_{x\ra a}^{\brk{\ell+1}}$ is close to zero it suffices to prove that 
$\mu_{x\ra a}^{\brk{\ell+1}}(1)/\mu_{x\ra a}^{\brk{\ell+1}}(-1)$ is close to one.
To this end we invoke (\ref{eqheuristic3}), obtaining
	\begin{eqnarray}\nonumber
	\frac{\mu_{x\ra a}^{\brk{\ell+1}}(1)}{\mu_{x\ra a}^{\brk{\ell+1}}(-1)}&=&\prod_{b\in N(x)\setminus\cbc a}\frac{\mu_{b\ra x}^{\brk\ell}(1)}{\mu_{b\ra x}^{\brk\ell}(-1)}\\		&\approx&
		\exp\brk{\sum_{b\in N(x)\setminus\cbc a}\hspace{-4mm}2^{1-|N(b)|}\bc{\sign(x,b)
				-2\hspace{-4mm}\sum_{y\in N(b)\setminus\cbc x}\hspace{-4mm}\sign(x,b)\sign(y,b)\Delta_{y\ra b}^{\brk\ell}}}.
					\label{eqheuristic4}
	\end{eqnarray}
Thus, we need to show that for all but $\delta\theta n$ variables $x$ the exponent is close to zero.

To deal with the $\sum_{b\in N(x)\setminus\cbc a}2^{1-|N(b)|}\sign(x,b)$ bit,
we need to estimate in how many clauses of a given length $x$ is likely to appear.
Letting
		\begin{equation}\label{eqrho}
		\rho=kr/2^k,
		\end{equation}
we find that the expected number of clauses of length $j$ where $x\in V_t$ appears is asymptotically equal to
	\begin{equation}					\label{eqheuristic5}
	\frac{km}n\bink{k-1}{j-1}\theta^{j-1}\bcfr{1-\theta}{2}^{k-j}
			=\rho2^j\cdot\pr\brk{\Bin(k-1,\theta)=j-1}\leq\rho2^j.
	\end{equation}
Indeed, the expected number of clauses of $\PHI$ that $x$ appears in equals $km/n=kr=2^k\rho$.
Furthermore, each of these gives rise to a clause of length $j$ in $\PHI^t$ iff exactly $j-1$ among the other $k-1$ variables in the clause
are from $V_t$, while the $k-j$ remaining variables are in $V\setminus V_t$ and occur with negative signs.
(If one of them had a positive sign, the clause would have been satisfied by setting the corresponding variable to true. It would thus
not be present in $\PHI^t$ anymore.)
Since $x$ appears with a random sign in each of these clauses, the sum
	$$\sum_{b\in N(x)\setminus\cbc a:|N(b)|=j}\sign(x,b)$$
can be viewed as a random walk with an expected length of $\rho2^j$.
Thus, we expect an outcome of $O(\sqrt{2^j\rho})$.
In this case, we find that
	$$\sum_{b\in N(x)\setminus\cbc a:|N(b)|=j}\hspace{-4mm}
		2^{1-|N(b)|}\sign(x,b)=2^{1-j}\cdot O(\sqrt{2^j\rho})=O(\sqrt \rho 2^{-j/2}).$$
Together with the Chernoff bound, (\ref{eqheuristic5}) shows that $x$ is unlikely to occur in clauses of lengths less than $0.1\theta k$ or more than $10\theta k$.
Furthermore, our assumption that $\theta k\geq\ln(\rho)/c^2$ implies that $\sqrt \rho 2^{-j/2}\leq\exp(-0.01\theta k)$ for all $j\geq0.1\theta k$.
Hence, we expect that for all but, say, $\delta_t\theta n/2$ variables $x\in V_t$ 
	\begin{equation}\label{eqheuristic6}
	\max_{a\in N(x)}\abs{\sum_{b\in N(x)\setminus\cbc a}2^{1-|N(b)|}\sign(x,b)}\leq O(\theta k\exp(-0.01\theta k))\leq\delta_t/4.
	\end{equation}

The second contribution
	$$\sum_{b\in N(x)\setminus\cbc a}
				\sum_{y\in N(b)\setminus\cbc x}\hspace{-4mm}2^{2-|N(b)|}\sign(x,b)\sign(y,b)\Delta_{y\ra b}^{\brk\ell}$$
is a \emph{linear} function of the bias vector $\Delta^{\brk\ell}$ from the previous round.
Indeed, this operator can be represented by a matrix
	\begin{eqnarray*}
	\Lambda^*&=&(\Lambda^*_{x\ra a,y\ra b})_{x\ra a,y\ra b}\quad\mbox{ with entries }\\
	\Lambda^*_{x\ra a,y\ra b}&=&\left\{\begin{array}{cl}
			2^{2-|N(b)|}\sign(x,b)\sign(y,b)&\mbox{ if }a\neq b,\,x\neq y,\mbox{ and }\,b\in N(x),\\
			0&\mbox{ otherwise.}\end{array}\right.
	\end{eqnarray*}
with $x\ra a$, $y\ra b$ ranging over all edges of the factor graph of $\PHI^t$.

Since $\Lambda^*$ is based on $\PHI^t$, it is a random matrix.
One could therefore try to use standard arguments to  bound it in some norm (say, $\cutnorm{\Lambda^*}$).
The problem with this approach is that $\Lambda^*$ is very high-dimensional:
	it operates on a space whose dimension is equal to the number of \emph{edges} of the factor graph.
In effect, standard random matrix arguments do not apply.

To resolve this problem, consider a ``projection'' of $\Lambda^*$ onto a space of dimension merely $|V_t|=\theta n$, namely
	\begin{eqnarray*}
	\Lambda:\RR^{V_t}\ra \RR^{V_t},&&\Gamma=(\Gamma_y)_{y\in V_t}\mapsto
		\cbc{\sum_{b\in N(x)}\sum_{y\in N(b)\setminus\cbc x}2^{2-|N(b)|}\sign(x,b)\sign(y,b)\Gamma_y}_{x\in V_{t}}
	\end{eqnarray*}
One can think of $\Lambda$ as a signed and  weighted adjacency matrix of $\PHI^t$.
Standard arguments easily show
that
	$\cutnorm{\Lambda}\leq\delta_t^4\theta n$ is small 
with a very high probability.
In effect, we expect that for all but, say,  $\delta_t\theta n/2$ variables $x\in V_t$ we have
	\begin{equation}\label{eqheuristic7}
	\max_{a\in N(x)}\abs{\sum_{b\in N(x)\setminus\cbc a}
				\sum_{y\in N(b)\setminus\cbc x}\hspace{-4mm}2^{2-|N(b)|}\sign(x,b)\sign(y,b)\Delta_{y\ra b}^{\brk\ell}}\leq\delta_t/4.
	\end{equation}
Combining (\ref{eqheuristic6}) and~(\ref{eqheuristic7}), we thus expect that for all but $\delta_t\theta n$ variables $x$
the expression~(\ref{eqheuristic4}) is sufficiently close to one to conclude that $\max_{a\in N(x)}\abs{\Delta^{\brk{\ell+1}}_{x\ra a}}\leq\delta_t$,
thereby completing the induction.

\subsubsection{Rigorizing the sketch.}
While the above outlines a strategy for tracing the BP operator, we clearly glossed over numerous issues.
The rest of the paper is devoted to rectifying them.
To provide a bit of orientation, let us briefly highlight the most important items, and indicate how they are going to be fixed.

The first issue is that \Thm~\ref{Thm_bias} claims a rather strong bound on the probability  that $\PHI^t$  is $(\delta_t,t)$-balanced.
To obtain this bound, we are going to proceed in two steps:
	in \Sec~\ref{Sec_R} we will exhibit a small number \emph{quasirandom properties} and
	show that these hold in $\PHI^t$ with the required probability.
Then, in \Sec~\ref{Sec_dynamics} we are going to show \emph{deterministically} that any formula that
has these properties is $(\delta_t,t)$-balanced.

A second major issue is the  $\approx$ signs in the above discussion.
Their use depended on the assumption that $|\Delta_{x\ra a}^{\brk\ell}|\leq\delta_t$ for \emph{all} $x\in V_t$, $a\in N(x)$.
However, this assumption is not going to be valid for any $\ell\geq1$.
Indeed, for some $x$, $\max_{a\in N(x)}|\Delta_{x\ra a}^{\brk\ell}|$ is going to be close or even equal to $1/2$:
	think of a variable that appears in a clause of length one (a ``unit clause''), or of a variable of a very high degree that appears only positively
		(a ``pure literal'').
Hence, we will need to cope with a small but non-empty set $T\brk\ell$ of ``exceptional'' variables $x$ with $\max_{a\in N(x)}|\Delta_{x\ra a}^{\brk\ell}|>\delta_t$.

To study the impact of the exceptional set, we decompose~(\ref{eqheuristic1}) as
	\begin{eqnarray}\nonumber
	\mu_{a\ra x}^{\brk\ell}(-\sign(x,a))&=&1-2^{1-|N(a)|}\brk{\prod_{y\in N(a)\setminus(T\brk\ell\cup\cbc x)}
				1-2\sign(y,a)\Delta_{y\ra a}^{\brk\ell}}\\
				&&\qquad\quad\ \qquad\cdot\brk{\prod_{y\in N(a)\cap T\brk\ell\setminus\cbc x}
				1-2\sign(y,a)\Delta_{y\ra a}^{\brk\ell}}.
				\label{eqheuristic8}
	\end{eqnarray}
There are going to be various cases depending on the length of the clause.
If, say, $0.1\theta k\leq|N(a)|\leq10\theta k$ and $|N(a)\cap T\brk\ell\setminus\cbc x|\leq1$ (i.e., the second product contains at most one factor),
then the above heuristic computation essentially goes through.
This case is going to be represented by the set $N_{\leq1}(x,T\brk\ell)$ below.

More generally, if $0.1\theta k\leq|N(a)|\leq10\theta k$, 
say, then the product~(\ref{eqheuristic8}) is quite close to one, regardless of $|N(a)\cap T\brk\ell\setminus\cbc x|$.
Thus, a single ``exposed'' clause $a$, or even a small number, are not going to affect the ratio~(\ref{eqheuristic4}) much.
To exploit this, 
we will establish as part of the quasirandomness property that for \emph{any} possible set $T\brk\ell$
only very few variables $x$ are ``heavily exposed'',
meaning that they appear in many clauses that contain several variables from $T\brk\ell$
(cf.\ {\bf Q2} and {\bf Q3} below).
Furthermore, we will generally show that there are only very few variables that occur in a clause
$a$ such that $|N(a)|\not\in\brk{0.1\theta k,10\theta k}$ (cf.~{\bf Q1}).

A third issue is the dimension reduction in the linear operator, i.e., that we work with $\Lambda$ instead of $\Lambda^*$.
To vindicate this point, we need to show that for most variables $x$ the bias $\Delta_{x\ra a}^{\brk\ell}$ is essentially independent of $a$.
Furthermore, we need to modify the operator $\Lambda$ to ``cut out'' the exceptional set $T\brk\ell$ where the BP operator has a highly non-linear behavior.
This is going to be mirrored in condition {\bf Q4} below.

Let us now turn this sketch into an actual proof.
In \Sec~\ref{Sec_R} we introduce the quasirandomness property and state the deterministic result about BP on quasirandom formulas (\Thm~\ref{Thm_dynamics}).
Then, from \Sec~\ref{Sec_dynamics} onwards, we prove \Thm~\ref{Thm_dynamics}.
Finally, in \Sec~\ref{Apx_reasonble} we establish that the quasirandomness property holds on $\PHI^t$ with the required probability.

\subsection{The quasirandomness property}\label{Sec_R}

In this section we will exhibit a few simple quasirandomness properties that $\PHI^{t}$
is very likely to possess.
From \Sec~\ref{Sec_dynamics} on we will show that these properties suffice to trace the BP operator.

To state the quasirandomness properties, fix a $k$-CNF $\Phi$.
Let $\Phi^t=\Phi_{t,\id,\vecone}$ denote the CNF obtained from $\Phi$ by substituting `true' for $x_1,\ldots,x_t$ and simplifying ($1\leq t\leq n$).
Let $V_{t}=\cbc{x_{t+1},\ldots,x_n}$ be the set of variables of $\Phi^t$.
As before, we will denote the factor graph of $\Phi^t$ by $G=G(\Phi^t)$,
and the neighborhood of a vertex $v$  by $N(v)$.
We continue to let $\theta$ and $\rho$ be defined as in~(\ref{eqtheta}) and~(\ref{eqrho}).

For a variable $x\in V_t$ and a set $T\subset V_t$ let
	\begin{eqnarray}\label{eqNleq1}
	N_{\leq1}(x,T)&=&\cbc{b\in N(x):\abs{N(b)\cap T\setminus\cbc x}\leq1\wedge0.1\theta k\leq|N(b)|\leq10\theta k}.
	\end{eqnarray}
Thus, $N_{\leq1}(x,T)$ is the set of all clauses that contain $x$ (which may or may not be in $T$) and at most
one other variable from $T$.
In addition, there is a condition on the \emph{length} $|N(b)|$ of the clause $b$ in the decimated formula $\Phi_t$.
Recall from \Sec~\ref{Sec_Overview} that having assigned the first $t$ variables, we should `expect' the average clause length to be $\theta k$.

With $c>0$ as in~(\ref{eqdeltat})
we let
	$$k_1=\sqrt c\theta k.$$
Moreover, for a variable $x\in V_t$ and a set $T\subset V_t$ let
	\begin{eqnarray*}
	N_1(x,T)&=&\cbc{b\in N(x):\abs{N(b)\setminus T}\geq k_1,|N(b)\cap T\setminus x|=1},\\
	N_{>1}(x,T)&=&\cbc{b\in N(x):\abs{N(b)\setminus T}\geq k_1,|N(b)\cap T\setminus x|>1}.
	\end{eqnarray*}

\begin{definition}\label{Def_reasonable}
Let $\delta>0$.
We say that $\Phi$ is \emph{\bf\em ($\delta,t)$-quasirandom} if  {\bf Q0}--{\bf Q4}  in Figure~\ref{Fig_reasonable}
are satisfied.
\end{definition}

\begin{figure}[t]\center\normalsize
\begin{tabular}{ll}
{\bf Q0.}&  \parbox[t]{14cm}{\itshape $\Phi$ is tame.}\\
{\bf Q1.}& \parbox[t]{14cm}{\itshape 
	No more than $10^{-5}\delta\theta n$ variables occur in clauses
				of length less than $\theta k/10$ or greater than $10\theta k$ in $\Phi^t$.
	Moreover, there are at most $10^{-4}\delta\theta n$  variables $x\in V_{t}$
		such that
			$$\textstyle(\theta k)^3\delta\cdot \sum_{b\in N(x)}2^{-|N(b)|}>1.$$ }\\
{\bf Q2.}& \parbox[t]{14cm}{\itshape 
	If $T\subset V_{t}$ has size $\abs T\leq\delta\theta n$, then there are no more than
		$10^{-4}\delta\theta n$ variables $x$ such that either
			\begin{eqnarray*}
			\sum_{b\in N_1(x,T)}
					2^{-|N(b)|}>\rho(\theta  k)^5\delta,&\mbox{ or }&
			\sum_{b\in N_{>1}(x,T)}
						2^{|N(b)\cap T\setminus\cbc x|-|N(b)|}>\frac{\delta}{\theta  k},\mbox{ or }\\
				\abs{\sum_{b\in N_{\leq1}(x,T)}\hspace{-2mm}\frac{\sign(x,b)}{2^{|N(b)|}}}>\frac{\delta}{1000}.
			\end{eqnarray*}}\\
{\bf Q3.}& \parbox[t]{14cm}{\itshape For any $0.01\leq z\leq 1$ and any set $T\subset V_{t}$ of size
		$|T|\leq 100\delta\theta n$
			we have
				$$\sum_{b:|N(b)\cap T|\geq z|N(b)|}|N(b)|\leq\frac{1.01}z|T|+10^{-4}\delta\theta n.$$
			}\\
{\bf Q4.}& \parbox[t]{14cm}{\itshape For any set $T\subset V_{t}$ of size $|T|\leq 10\delta\theta n$ the linear operator
	$\Lambda_T:\RR^{V_{t}}\rightarrow\RR^{V_{t}}$, 
			\begin{eqnarray*}
				\Gamma=(\Gamma_y)_{y\in V_t}&\mapsto&
				\cbc{\sum_{b\in N_{\leq1}(x,T)}\sum_{y\in N(b)\setminus\cbc x}2^{-|N(b)|}\sign(x,b)\sign(y,b)\Gamma_y}_{x\in V_{t}}
			\end{eqnarray*}
		has norm $\cutnorm{\Lambda_T}\leq \delta^4\theta n$.}\\
\end{tabular}
\caption{The conditions for Definition~\ref{Def_reasonable}.}\label{Fig_reasonable}
\end{figure}

Condition {\bf Q0} simply bounds the number of redundant clauses and the number of variables of very high degree;
it well-known to hold for random $k$-CNFs \whp\
Apart from a bound on the number of very short/very long clauses,
{\bf Q1} provides a bound on the `weight' of clauses in which variables $x\in V_{t}$ typically occur,
where the weight of a clause $b$ is $2^{-|N(b)|}$.
Moreover, {\bf Q2} provides that there is no small set $T$ for which the total weight of the clauses touching that set is very big.
In addition, {\bf Q2} (essentially) requires that for most variables $x$ the weights of the clauses where $x$ occurs positively/negatively should
approximately cancel. 
Further, {\bf Q3} provides a bound on the lengths of clauses that contain many variables from a small set $T$.
Finally, the most important condition is {\bf Q4}, providing a bound on the cut norm
of a signed, weighted matrix representation of $\Phi^t$.

\begin{proposition}\label{Prop_reasonable}
There exists a constant $\rho_0>0$ such that for any $k,r$ satisfying $\rho_0\cdot2^k/k\leq r\leq2^k\ln2$
there is $\xi=\xi(k,r)>0$ so that for $n$ large and $\delta_t$, $\hat t$ as in~(\ref{eqdeltat})  
for any $1\leq t\leq \hat t$ we have
	$$\textstyle\pr\brk{\PHI\mbox{ is $(\delta_t,t)$-quasirandom}\,|\,\mbox{$\PHI$ is tame}}\geq
			1-\exp\brk{-10\bc{\xi n+\Delta_t}}.$$
\end{proposition}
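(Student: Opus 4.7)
The plan is to reduce to a concrete probabilistic model for $\PHI_t$. By permutation-invariance of the uniform distribution over $k$-CNFs and sign-flipping symmetry in each variable, it suffices to verify the conditions for $\PHI_t=\PHI_{t,\id,\vecone}$, which admits the explicit description: each of the $m$ original clauses independently survives with probability $((1+\theta)/2)^k$, and conditional on survival its length is $\Bin(k,2\theta/(1+\theta))$, its variables are an i.i.d.\ uniform sample from $V_{t+1}$, and its signs are i.i.d.\ uniform $\pm 1$. Each of {\bf Q0}--{\bf Q4} is then a functional of these i.i.d.\ choices, and the target failure probability $\exp[-10(\xi n+\sum_{s\le t}\delta_s)]$ is only $\exp(-\Omega(n))$, since $\delta_s=\exp(-c(1-s/n)k)$ gives $\sum_{s\le t}\delta_s=O(n)$.

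Conditions {\bf Q0} and {\bf Q1} follow from classical Chernoff / first-moment estimates: the numbers of redundant clauses and of variables of degree $>\ln n$ have expectation $o(n)$ with exponential tails, and the clause-length distribution concentrates around $\theta k$, so few variables occur in atypically short or long clauses. The weighted-degree part of {\bf Q1} follows by Markov's inequality applied to the i.i.d.\ sum $\sum_b|N(b)|\cdot 2^{-|N(b)|}$, whose mean is exponentially small in $\theta k$. Conditions {\bf Q2} and {\bf Q3} proceed by union bound over subsets $Q\subseteq V_{t+1}$ of size $q\le 100\delta_t(n-t)$: for each fixed $Q$, every listed functional is a Chernoff-concentrated sum of bounded independent clause-level contributions, and the entropy $\log\binom{n-t}{q}\lesssim q\log(e/\delta_t)=O(\delta_t\cdot c\theta k\cdot n)$ is absorbed into the concentration rate once $c$ is a sufficiently small absolute constant and $\rho_0$ is large enough to make $\theta k$ bounded below by an absolute constant throughout $t\le T$.

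The main obstacle is condition {\bf Q4}, the cut-norm bound $\cutnorm{\Lambda_Q}\le\delta_t^4(n-t)$. My approach is to use the dual characterisation
\[
\cutnorm{\Lambda_Q}=\max_{\eta,\Gamma\in\cbc{-1,+1}^{V_{t+1}}}\eta^\top\Lambda_Q\Gamma,
\]
which allows one to fix $\eta,\Gamma,Q$ and then union-bound over the $4^{n-t}$ sign-vectors and the $\exp[O(\delta_t n\theta k)]$ sets $Q$. For a fixed triple, one rewrites $\eta^\top\Lambda_Q\Gamma$, modulo low-order corrections from the diagonal $x=y$ and from the $N_{\le 1}(x,Q)$ restriction (controlled by {\bf Q1} and {\bf Q3}), as
\[
S=\sum_b 2^{-|N(b)|}\sum_{x\ne y\in N(b)}\eta_x\Gamma_y\sign(x,b)\sign(y,b).
\]
Conditional on the random variable sets $N(b)$, each clause's contribution is a centred quadratic form in its i.i.d.\ signs, subexponential with variance $O(4^{-|N(b)|}|N(b)|^2)$ and individual magnitude $O(2^{-|N(b)|}|N(b)|^2)$; a Bernstein / Hanson--Wright bound on the independent sum $S$ gives a tail of order $\exp[-\Omega(u^2/V)]$ at scale $u=\delta_t^4(n-t)$, with $V$ dominated by typical clauses of length $\sim\theta k$. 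A direct computation using $m=rn$ and $n-t=\theta n$ shows that $u^2/V$ and the dominant union-bound entropy $(n-t)\log 4$ both scale like $n/k$, with the former exceeding the latter by a factor of the form $\exp[(2\log 4-1-8c)\theta k]/\mathrm{poly}(\theta k,r\cdot 2^{-k})$. Since $\theta\ge r/2^k$ for $t\le T$, the hypothesis $r\ge\rho_0\cdot 2^k/k$ forces $\theta k\ge\rho_0$, so choosing $c$ a small absolute constant and $\rho_0$ a large absolute constant makes this exponential factor dominate the union-bound cost. The delicate point, and the source of the lower bound $r\ge\rho_0\cdot 2^k/k$, is precisely that $\theta k$ must exceed an absolute constant for the exponential savings in the Bernstein exponent to overcome the polynomial-in-$k$ dependence inherited from the range $r\le 2^k\ln 2$.
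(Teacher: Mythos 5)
Your overall architecture matches the paper's: pass to a tractable model where clauses are i.i.d.\ (the paper formalises this via the auxiliary models $\PHI'$ and $\PHI''$ and Fact~\ref{Fact_models}, which your ``explicit description'' implicitly relies on), handle {\bf Q0}--{\bf Q3} by Chernoff/first-moment bounds with a union bound over sets $Q$ whose entropy $\approx|Q|\ln(1/\delta_t)$ is absorbed into the concentration rate, and isolate {\bf Q4} as the heart of the matter. The genuine divergence is in {\bf Q4}. The paper first decomposes $\Lambda_Q$ into a triply-indexed family $\Lambda_Q^{ijL}$ indexed by literal positions $i<j$ and clause length $L\in[0.1\theta k,10\theta k]$, and then uses the cut-norm reduction $\cutnorm{\Lambda_Q^{ijL}}\leq 24\max_{A\cap B=\emptyset}|\langle\Lambda_Q^{ijL}\vecone_A,\vecone_B\rangle|$; the point of that decomposition is that for fixed $(A,B,i,j,L)$ the bilinear form splits as $\mu(A,B)-\nu(A,B)$ where $\mu,\nu$ are {\em identically distributed sums of independent Bernoulli clause indicators} (each clause contributes at most one $\pm1$, determined by its position-$(i,j)$ pair), so one never needs to reason about order-two sign chaos --- everything collapses to a Chernoff bound with a $16^{-\theta n}$ tail beating the $16^{\theta n}$ pairs $(A,B)$, and the sum over $i,j,L$ only costs a polynomial factor. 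You keep the bilinear form whole, use the vertex characterisation $\cutnorm{\Lambda_Q}=\max_{\eta,\Gamma\in\{\pm1\}^{V_{t+1}}}\eta^\top\Lambda_Q\Gamma$, condition on the clause variable-sets, and apply Bernstein/Hanson--Wright to the resulting sum of independent order-two Rademacher chaoses. That works and correctly pins the role of $r\geq\rho_0 2^k/k$ (namely, $\theta k\geq\rho_0$ for all $t\leq T$, so the exponential gain in $u^2/V$ dominates the $4^{\theta n}$ union-bound entropy once $c$ is small and $\rho_0$ is large), but the accounting is more delicate: you need to strip the diagonal and the $N_{\leq 1}(x,Q)$ restriction using {\bf Q1}--{\bf Q3} before the chaos argument applies, and the variance $V$ depends on the realised clause-length profile, which you must control alongside. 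The paper's position/length decomposition buys a simpler concentration inequality at the cost of a longer combinatorial set-up; your direct chaos bound is shorter to state but pushes more weight onto the quadratic-form tail estimate.
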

The proof of \Prop~\ref{Prop_reasonable} is a necessary evil:
it is long, complicated and based on standard arguments.
	We defer it to \Sec~\ref{Apx_reasonble}.
Together with the following theorem, which we will establish in \Sec~\ref{Sec_dynamics},  \Prop~\ref{Prop_reasonable} yields
\Thm~\ref{Thm_bias}.

\begin{theorem}\label{Thm_dynamics}
There is $\rho_0>0$ such that for any $k,r$ satisfying $\rho_0\cdot 2^k/k\leq r\leq2^k\ln2$ and $n$ sufficiently large
the following is true.
Let $\Phi$ be a $k$-CNF with $n$ variables and $m$ clauses
that is $(\delta_t,t)$-quasirandom for some $1\leq t\leq \hat t$.
Then $(\Phi,\id,\vecone)$ is $(\delta_t,t)$-balanced.
\end{theorem}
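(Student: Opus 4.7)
The plan is to track the iterates of Belief Propagation on the decimated formula $\Phi_t$, starting from the uniform seed $\mu^{[0]}=\tfrac12\vec 1$, and to show that under the quasirandomness conditions {\bf Q0}--{\bf Q4} the message biases $\gamma^{[\ell]}_{x\to a}:=\mu^{[\ell]}_{x\to a}(1)-\tfrac12$ stay within $\delta_t/100$ of zero for all but at most $\delta_t(n-t)$ variables $x$, uniformly in the iteration count $\ell$. Since the marginal formula (\ref{eqBPmarginal}) is a ratio of the same shape as (\ref{eqBPia}), this will immediately translate to $|\mu_x(\Phi_t,\omega)-\tfrac12|\leq\delta_t$ for all but at most $\delta_t(n-t)$ variables, which is exactly $(\delta_t,t)$-balancedness, so the same estimate delivers the theorem regardless of the $\omega$ used by {\tt BPdec}.

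A direct Taylor expansion of (\ref{eqBPai})--(\ref{eqBPia}) around $\gamma=0$, justified because clause lengths are $\Theta(\theta k)$ outside a negligible set of variables (by {\bf Q1}) and because the seed has zero bias, yields
\[
\gamma^{[\ell+1]}_{x\to a}\;=\;\tfrac12\!\!\sum_{b\in N(x)\setminus\{a\}}\!\!\sign(x,b)\,2^{-|N(b)|}\;-\!\!\sum_{b\in N(x)\setminus\{a\}}\sum_{y\in N(b)\setminus\{x\}}\!\!\sign(x,b)\sign(y,b)\,2^{-|N(b)|}\gamma^{[\ell]}_{y\to b}\;+\;E^{[\ell]}_{x\to a},
\]
where $E^{[\ell]}_{x\to a}$ collects the quadratic and higher remainders from both the clause-update (\ref{eqBPai}) and the variable-update (\ref{eqBPia}). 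The ``source'' term is precisely the quantity bounded by the third inequality of {\bf Q2}, and the ``linear-response'' term is, up to the $N_{\leq1}$--restriction, the operator $\Lambda_Q$ of {\bf Q4}.

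I would then proceed by induction on $\ell\geq 0$, maintaining the invariant that the exceptional set $Q^{[\ell]}:=\{x\in V_{t+1}:\max_{a\in N(x)}|\gamma^{[\ell]}_{x\to a}|>\delta_t/100\}$ satisfies $|Q^{[\ell]}|\leq\delta_t(n-t)$. For the inductive step, each sum over $b$ in the update is split into (i) clauses $b\in N_{\leq1}(x,Q^{[\ell]})$, i.e.\ clauses with at most one other variable in the ``bad'' set, and (ii) clauses with two or more variables in $Q^{[\ell]}$. For case (i) the cut-norm bound $\cutnorm{\Lambda_{Q^{[\ell]}}}\leq\delta_t^4(n-t)$ from {\bf Q4}, applied to $\gamma^{[\ell]}$ with $\|\gamma^{[\ell]}\|_\infty\leq\tfrac12$, bounds the $\ell_1$-mass of the linear response by $O(\delta_t^4(n-t))$, so at most $O(\delta_t^2(n-t))$ coordinates can exceed $\delta_t^2$; combined with the third inequality of {\bf Q2} for the source, this keeps the type-(i) contribution below $\delta_t/200$ for all but a small fraction of $x$. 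For case (ii), the first two inequalities of {\bf Q2} bound the combined weight $\sum 2^{-|N(b)|}$ and amplification $\sum 2^{|N(b)\cap Q^{[\ell]}\setminus\{x\}|-|N(b)|}$ of dangerous clauses, while {\bf Q3} rules out the pathological concentration of clause-length on $Q^{[\ell]}$ that would allow $\prod_{y}(1-2\sign(y,b)\gamma^{[\ell]}_{y\to b})$ to blow up. Finally, {\bf Q1} places clause lengths in $[\theta k/10,\,10\theta k]$ outside a negligible set of variables, so each factor $p^{[\ell]}_{b\to x}=\prod_{y\in N(b)\setminus\{x\}}(1/2-\sign(y,b)\gamma^{[\ell]}_{y\to b})$ is exponentially small in $\theta k$ and the Taylor remainder $E^{[\ell]}_{x\to a}$ is absorbed into the $\delta_t/100$ budget after summing against the degree bound supplied by {\bf Q0}.

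The hard part will be the \emph{joint} propagation of the size bound on $Q^{[\ell]}$ and the control of the nonlinear errors over the full $\omega$ iterations: $Q^{[\ell]}$ can drift from step to step, yet the same quasirandomness estimates must apply to every subset $Q$ that arises, which is exactly why {\bf Q4} (and the first two inequalities of {\bf Q2}) are formulated \emph{uniformly} over all $Q$ of size up to $10\delta_t(n-t)$, and why the conditions carry the polynomial slack factors $(\theta k)^3$, $(\theta k)^5$ and $\delta_t^4$ rather than plain $\delta_t$. Once the invariant is propagated through step $\omega$, the same first-order expansion applied to (\ref{eqBPmarginal}) (a product over \emph{all} $b\in N(x)$) gives $|\mu_x(\Phi_t,\omega)-\tfrac12|\leq\delta_t$ for every $x\notin Q^{[\omega]}$, and since $|Q^{[\omega]}|\leq\delta_t(n-t)$ by the inductive invariant, $(\Phi,\id,\vec 1)$ is $(\delta_t,t)$-balanced, completing the proof of \Thm~\ref{Thm_dynamics}.
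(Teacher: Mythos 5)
Your proposal shares the paper's skeleton — trace $\BP^\ell$ from the uniform seed, linearize around $\frac12\vecone$, identify the linear response with the operator of {\bf Q4}, and propagate a size bound on an exceptional set by induction on $\ell$ — so the direction is right. But there is a genuine gap in how you propose to close the induction, and it is exactly the place you flag as the ``hard part.''

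The difficulty is not only that $Q^{[\ell]}$ drifts. The real problem is that once even one clause $b$ has most (or all) of its other neighbors in $Q^{[\ell]}$, the clause-to-variable message $\mu_{b\ra x}(\zeta)\brk\ell$ is no longer close to $1$ — it can be bounded away from $1$ or even make the denominator in~(\ref{eqBPia}) vanish — and then your case (ii) bound collapses: {\bf Q2} and {\bf Q3} only control \emph{weights} $2^{-|N(b)|}$ and clause-length concentration, not the messages themselves. The Taylor expansion you write down is simply invalid for such clauses, and you cannot absorb their contribution into a quadratic remainder. The paper handles this by \emph{not} tracking $B\brk\ell$ directly: it builds a layered superset $T\brk\ell = T_1\brk\ell \cup T_2\brk\ell \cup N(T_3\brk\ell)$, where $T_3\brk\ell$ is the set of newly ``infected'' clauses (those with $|N(a)\setminus T\brk{\ell-1}|\leq k_1$, $k_1 = \sqrt c\,\theta k$), and where $T_2\brk\ell$ is constructed ({\bf T2d}, {\bf T2e}) so that every clause \emph{outside} $T_3\brk\ell$ has at least $3|N(b)|/4$ neighbors that are ``harmless'' in the sense of {\bf H1}--{\bf H4}. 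This is what lets one prove (Cor_clause2var in the appendix) that for every clause $b$ with $N(b)\not\subset T\brk\ell$ the ratio $\mu_{b\ra x}(1)\brk{\ell-1}/\mu_{b\ra x}(0)\brk{\ell-1}$ is within $\exp(-k_1/2)$ of $1$, and in particular nonzero; in turn, this relies on a separate lemma (Cor_harmless) showing that a harmless variable adjacent to an infected clause has bias bounded by $0.01$ — not by $\delta_t$, which is a crucial feature: such variables are controlled but still ``biased,'' so they must be carried in the superset rather than be dismissed. Your invariant $|Q^{[\ell]}|\leq\delta_t(n-t)$ with a single bad set is too coarse to carry this information, and your argument has no mechanism to prevent the infected clauses from producing exactly the blow-up you worry about. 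A minor secondary issue: your error term $E^{[\ell]}_{x\ra a}$ is said to be absorbed ``against the degree bound supplied by {\bf Q0},'' but {\bf Q0} only controls redundant clauses and $\ln n$-degree vertices; the actual bound on $\sum_{b\in N(x)} 2^{-|N(b)|}$ used to kill the remainder comes from {\bf Q1}/{\bf T2b}, not {\bf Q0}.
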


\noindent
The rest of this section deals with the proof of \Thm~\ref{Thm_dynamics}.

\medskip\noindent
{\bf\em For the rest of \Sec~\ref{Sec_tracing}, we keep the notation from \Sec~\ref{Sec_R} and the assumptions of \Thm~\ref{Thm_dynamics}.
To unclutter the notation, we let $\delta=\delta_t$.}

\subsection{Belief Propagation on quasirandom formulas: proof of \Thm~\ref{Thm_dynamics}
	}\label{Sec_dynamics}

Implementing the strategy outlined in \Sec~\ref{Sec_Overview},
we are going to trace the BP operator when iterated from the initial point
	$$\mu_{x\ra a}^{\brk 0}(-1)=\mu_{x\ra a}^{\brk 0}(1)=\frac12\qquad\mbox{ for all $x\in V_t$, $a\in N(x)$}.$$
Let $\mu^{\brk\ell}=\BP^\ell(\mu\brk 0)\in M(\Phi)$ be the result of the first $\ell$ iterations of BP.
Let
	$$\Delta_{x\ra a}^{\brk{\ell}}=\mu_{x\ra a}^{\brk{\ell}}(1)-\frac12.$$
We say that  $x\in V_t$ is $\ell$-{\bf\emph{biased}} if
	$$\max_{a\in N(x)}|\Delta_{x\ra a}^{\brk{\ell}}|\geq0.1\delta.$$
Clearly, no variable is $0$-biased.
Let $B\brk\ell$ be the set of all $\ell$-biased variables.
To prove \Thm~\ref{Thm_dynamics}, the core task will be to bound $|B\brk\ell|$.

To this end,  we are going to construct a sequence of sets $T\brk\ell$ whose sizes are easier to estimate
and that will turn out to be supersets of the $B\brk\ell$.
Actually we will construct sets of variables $T_1\brk\ell$, $T_2\brk\ell$ and sets of clauses $T_3\brk\ell$ inductively
and let $T\brk\ell=T_1\brk\ell\cup T_2\brk\ell\cup N(T_3\brk\ell)$.

For $\ell=0$ we let
$T_1\brk0=T_3\brk0=\emptyset$.
Moreover, let $T_2\brk0$ be the set of all variables $x$ such that
there is a clause $b\in N(x)$ 
		that is either redundant, or $|N(b)|<0.1\theta k$, or $|N(b)|>10\theta k$, or that satisfy
	$\delta(\theta k)^3\sum_{b\in N(x)}2^{-|N(b)|}>1$.

To define $T\brk{\ell+1}$ inductively for $\ell\geq0$, we need a bit of notation:
for $x\in V$ and $a\in N(x)$ we let
	\begin{eqnarray}\label{eqNxa}
	N_{\leq1}^{\brk{\ell+1}}\bc{x\ra a}&=&\cbc{b\in N_{\leq1}(x,T\brk\ell)\setminus\cbc a:\mu_{b\ra x}^{\brk\ell}(-1)>0}.
	\end{eqnarray}
Furthermore, 
 set
	\begin{eqnarray*}
	P_{\leq1}^{\brk{\ell+1}}\bc{x\ra a}&=&\prod_{b\in N_{\leq1}^{\brk{\ell+1}}\bc{x\ra a}}\frac{\mu_{b\ra x}^{\brk\ell}(1)}{\mu_{b\ra x}^{\brk\ell}(-1)},\nonumber\\
	\end{eqnarray*}
In addition, let
	\begin{eqnarray*}
	N_{>1}^{\brk{\ell+1}}\bc{x\ra a}&=&\cbc{b\in N(x)\setminus(\cbc a\cup N_{\leq1}(x,T\brk\ell)):\mu_{b\ra x}^{\brk\ell}(-1)>0},\nonumber\\
	P_{>1}^{\brk{\ell+1}}\bc{x\ra a}&=&\prod_{b\in N_{>1}^{\brk{\ell+1}}(x\ra a)}\frac{\mu_{b\ra x}^{\brk\ell}(1)}{\mu_{b\ra x}^{\brk\ell}(-1)}.\nonumber
	\end{eqnarray*}

The motivation behind these definitions is the following.
Assume for a moment that $\mu_{b\ra x}^{\brk\ell}(-1)\not=0$ for all $b\in N(x)$.
As we saw in \Sec~\ref{Sec_Overview}, to show that
 $\Delta_{x\ra a}^{\brk{\ell+1}}=\mu_{x\ra a}^{\brk{\ell+1}}(1)-\frac12$ is close to zero it suffices to verify that the ratio
	\begin{equation}\label{eqidea}
	\frac{\mu_{x\ra a}^{\brk{\ell+1}}(1)}{\mu_{x\ra a}^{\brk{\ell+1}}(-1)}
		=\prod_{b\in N(x)\setminus\cbc a}\frac{\mu_{b\ra x}^{\brk\ell}(1)}{\mu_{b\ra x}^{\brk\ell}(-1)}=
			P_{\leq1}^{\brk{\ell+1}}\bc{x\ra a}\cdot P_{>1}^{\brk{\ell+1}}\bc{x\ra a}
	\end{equation}
is close to one, because $\mu_{x\ra a}^{\brk{\ell+1}}(-1)+\mu_{x\ra a}^{\brk{\ell+1}}(1)=1$ by construction.
Moreover, (\ref{eqidea}) is close to one if both factors on the r.h.s.\ are.

Now, we let $T_1\brk{\ell+1}$ contain all variables for which $P_{\le1}^{\brk{\ell+1}}\bc{x\ra a}$ fails to be close enough to one:
	$$T_1\brk{\ell+1}=\cbc{x\in V:\max_{a\in N(x)}\abs{P_{\leq1}^{\brk{\ell+1}}\bc{x\ra a}-1}>0.01\delta}.$$
To also deal with the second product $P_{>1}^{\brk{\ell+1}}\bc{x\ra a}$, we define additional sets $T_2\brk{\ell+1}$, $T_3\brk{\ell+1}$.
To define $T_2\brk{\ell+1}$, let us say that a variable $x$ is $(\ell+1)$-{\bf\emph{harmless}} if it enjoys the following four properties.
\begin{description}
\item[H1.] We have $\delta(\theta k)^3\sum_{b\in N(x)}2^{-|N(b)|}\leq1$,
			and  $0.1\theta k\leq|N(b)|\leq10\theta k$ for all $b\in N(x)$.
\item[H2.] $\sum_{b\in N_1(x,T\brk\ell)}2^{-|N(b)|}\leq\rho(\theta k)^5\delta$
		and
		$\sum_{b\in N_{>1}(x,T\brk\ell))}
			2^{|N(b)\cap T\brk{\ell}\setminus\cbc x|-|N(b)|}\leq\delta/(\theta k)$.
\item[H3.] There is at most one clause $b\in N(x)$ such that $|N(b)\setminus T\brk{\ell}|\leq k_1$.
\item[H4.] $\abs{\sum_{b\in N_{\leq1}(x)}\sign(x,b)\cdot2^{-|N(b)|}}\leq0.01\delta$.
\end{description}
Let $H\brk{\ell+1}$ signify the set of all $(\ell+1)$-harmless variables.
Further, let $T_2\brk{\ell+1}$ be the set of all variables $x$ that have at least one of the following properties.

\medskip
\noindent
\begin{tabular}{ll}
{\bf T2a.}&\parbox[t]{14cm}{There is a clause $b\in N(x)$ 
		that is either redundant, or $|N(b)|<0.1\theta k$, or $|N(b)|>10\theta k$.}\\
{\bf T2b.}&\parbox[t]{14cm}{$\delta(\theta k)^3\sum_{b\in N(x)}2^{-|N(b)|}>1$.}\\
{\bf T2c.}&\parbox[t]{14cm}{Either
				\begin{eqnarray*}
			\sum_{b\in N_1(x,T\brk\ell)}
					2^{-|N(b)|}&>&\rho(\theta k)^5\delta,\quad\mbox{ or }\quad
			\sum_{b\in N_{>1}(x,T\brk\ell)}
						2^{\abs{N(b)\cap T\brk\ell\setminus\cbc x}-|N(b)|}>\delta/(\theta k).
			\end{eqnarray*}}\\
{\bf T2d.}&\parbox[t]{14cm}{$x$ occurs in more than $100$ clauses from $T_3\brk\ell$.}\\
{\bf T2e.}&\parbox[t]{14cm}{$x$ occurs in a clause $b$ that contains 
		fewer than $3|N(b)|/4$ variables from $H\brk{\ell}$.}
\end{tabular}

\medskip
\noindent
Items {\bf Q0} and {\bf Q1} from Definition~\ref{Def_reasonable} ensure
that there are only a very few variables that satisfy {\bf H1}, {\bf T2a}, or {\bf T2b}.
We always include these few into the set $T_2\brk{\ell+1}$ of `exceptional' variables.
Moreover, intuitively {\bf H2} and {\bf T2c}--{\bf T2e} capture variables $x$
that are highly exposed to the `exceptional' set $T\brk\ell$ from the previous round.
Furthermore, we let
	\begin{equation}\label{eqDefT3}
	T_3\brk{\ell+1}=\cbc{a\in \Phi^t:|N(a)|\geq100k_1\wedge|N(a)\setminus T\brk\ell|\leq k_1}\setminus T_3\brk\ell
	\end{equation}
contain all clauses that consist almost entirely of `exceptional' variables from $T\brk\ell$,
but \emph{without} including the clauses from the previous set $T_3\brk\ell$.
Finally,
	$$T\brk{\ell+1}=T_1\brk{\ell+1}\cup T_2\brk{\ell+1}\cup N(T_3\brk{\ell+1}).$$
In \Sec~\ref{Sec_TB} we will verify that $T\brk\ell$ does indeed contain the set $B\brk\ell$ of biased variables.
\begin{proposition}\label{Prop_TB}
We have $B\brk\ell\subset T\brk\ell$ for all $\ell\geq0$.
\end{proposition}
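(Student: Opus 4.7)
I will prove Proposition~\ref{Prop_TB} by induction on $\ell$. The base case $\ell = 0$ is immediate, since $\mu\brk 0 = \tfrac12\vecone$ forces $\Delta_{x\ra a}\brk 0 = 0$ for every $x$ and $a$, whence $B\brk 0 = \emptyset \subseteq T\brk 0$.

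For the inductive step, suppose $B\brk{\ell'} \subseteq T\brk{\ell'}$ for all $\ell' \leq \ell$, and argue by contrapositive: fix an arbitrary $x \in V_{t+1} \setminus T\brk{\ell+1}$ and $a \in N(x)$, and verify $|\Delta_{x\ra a}\brk{\ell+1}| < 0.1\delta$. If both denominator products in the BP update~(\ref{eqBPia}) vanish, then the rule sets the outgoing message to $\tfrac12$ and $\Delta = 0$, so the bound is trivial. Otherwise the first substantive step is to show that in fact \emph{both} products $\prod_{b\in N(x)\setminus\cbc a}\mu_{b\ra x}(\zeta)\brk\ell$ are strictly positive: a single factor $\mu_{b\ra x}(\zeta)\brk\ell=0$ forces $\mu_{x\ra a}(\zeta)\brk{\ell+1}\in\cbc{0,1}$ and hence $|\Delta|=\tfrac12$, which by (\ref{eqBPai}) requires all variables of $b$ other than $x$ to be forced; invoking the induction hypothesis on $B\brk\ell$ this can only happen when $|N(b)\setminus T\brk\ell|$ is tiny, so $b$ lies in $T_3\brk\ell\cup T_3\brk{\ell+1}$, contradicting $x \notin N(T_3\brk{\ell+1})$ once T2d negated rules out an accumulation in $T_3\brk\ell$. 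With both products positive I decompose
\[
R \;=\; \frac{\mu_{x\ra a}(1)\brk{\ell+1}}{\mu_{x\ra a}(0)\brk{\ell+1}} \;=\; \prod_{b\in N(x)\setminus\cbc a}\frac{\mu_{b\ra x}(1)\brk\ell}{\mu_{b\ra x}(0)\brk\ell} \;=\; P_{\leq 1}(x\ra a)\brk{\ell+1}\cdot P_{>1}(x\ra a)\brk{\ell+1}
\]
via (\ref{eqNxa}), and since $\Delta_{x\ra a}\brk{\ell+1} = (R-1)/(2(R+1))$ it suffices to establish $|R-1|\leq 0.4\delta$.

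The first factor is immediate: the assumption $x \notin T_1\brk{\ell+1}$ is, by the very definition of $T_1\brk{\ell+1}$, the inequality $|P_{\leq 1}(x\ra a)\brk{\ell+1} - 1|\leq 0.01\delta$. The heart of the argument is the bound $|\ln P_{>1}(x\ra a)\brk{\ell+1}| \leq \tfrac{1}{20}\delta$. For each $b \in N_{>1}(x\ra a)\brk{\ell+1}$ set $J(b) = N(b)\cap T\brk\ell\setminus\cbc x$ and $J'(b) = N(b)\setminus(T\brk\ell\cup\cbc x)$: by the induction hypothesis, every $y \in J'(b)$ satisfies $|\Delta_{y\ra b}\brk\ell| < 0.1\delta$, so unfolding (\ref{eqBPai}) yields
\[
\prod_{y\in N(b)\setminus\cbc x}\mu_{y\ra b}\bc{\tfrac{1-\sign(y,b)}{2}}\brk\ell \;\leq\; \bc{\tfrac12 + 0.1\delta}^{|J'(b)|}.
\]
A first-order expansion of $\ln(1/(1-\cdot))$ then gives $|\ln(\mu_{b\ra x}(1)/\mu_{b\ra x}(0))\brk\ell| \leq C\cdot 2^{|J(b)|-|N(b)|}$ for an absolute constant $C$, provided this right-hand side is at most $\tfrac12$. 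Summing over $b\in N_{>1}(x,T\brk\ell)$ invokes the second inequality of H2 (which holds since $x \notin T_2\brk{\ell+1}$ rules out T2c) and delivers a contribution of order $\delta/(\theta k)$, well below $0.01\delta$.

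The main obstacle is the complementary set of clauses in $N_{>1}(x\ra a)\brk{\ell+1}\setminus N_{>1}(x, T\brk\ell)$, namely those with $|N(b)\setminus T\brk\ell| < 0.01\theta k$, for which H2 is silent and the naive $(\tfrac12+0.1\delta)^{|J'(b)|}$ estimate can fail to be small. For $c$ chosen small enough that $100 k_1 < 0.1\theta k$, T2a negated forces $|N(b)|\geq 100k_1$ for every $b\in N(x)$, so any $b$ with $|N(b)\setminus T\brk\ell|\leq k_1$ would satisfy the defining condition of $T_3\brk{\ell+1}$; since $x\notin N(T_3\brk{\ell+1})$, such $b$ must lie in the previously accumulated set $T_3\brk\ell$, and T2d negated caps these at $100$ clauses. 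For this bounded exceptional set I control the contribution by using H2 term-by-term together with T2e negated, which forces at least $3|N(b)|/4$ of $b$'s variables to be $\ell$-harmless and thus not ``freshly frozen'' in a way that blows up the ratio. The remaining ``gap'' clauses, those with $k_1 < |N(b)\setminus T\brk\ell| < 0.01\theta k$, enjoy $|J'(b)| \geq k_1-1$ and therefore contribute at most $2^{-k_1+O(1)}$ each; the weighted degree bound $\delta(\theta k)^3\sum_{b\in N(x)}2^{-|N(b)|}\leq 1$ (from T2b negated) together with the length constraint from T2a controls their aggregate total, so their collective contribution is $\leq 0.01\delta$ once $c$ is small and $\rho_0$ large. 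Summing the three regimes gives $|\ln P_{>1}|\leq\tfrac{1}{20}\delta$, which combined with $|P_{\leq 1}-1|\leq 0.01\delta$ yields $|R-1|\leq 0.4\delta$ and completes the induction.
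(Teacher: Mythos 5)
Your overall strategy matches the paper's: induction on $\ell$, the decomposition $R=P_{\leq1}(x\ra a)\brk{\ell+1}\cdot P_{>1}(x\ra a)\brk{\ell+1}$ as in~(\ref{eqidea}) and~(\ref{eqNxa}), reading the $P_{\leq1}$ bound directly off the definition of $T_1\brk{\ell+1}$, and controlling $P_{>1}$ from the {\bf T2} conditions. But two of your steps do not go through as written.

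The first is positivity. You claim that a vanishing denominator factor $\mu_{b\ra x}(\zeta)\brk\ell=0$ forces $b\in T_3\brk\ell\cup T_3\brk{\ell+1}$, ``contradicting $x\notin N(T_3\brk{\ell+1})$ once T2d negated rules out an accumulation in $T_3\brk\ell$.'' But {\bf T2d} negated only caps the number of clauses through $x$ lying in $T_3\brk\ell$ at $100$; it does not exclude $b\in T_3\brk\ell$, so there is no contradiction, and you have not shown $\mu_{b\ra x}(0)\brk\ell>0$. The paper handles exactly this case via \Cor~\ref{Cor_clause2var}: since $b\in T_3\brk\ell$, {\bf T2e} negated forces $|N(b)\cap H\brk\ell|\geq\tfrac34|N(b)|$, and \Cor~\ref{Cor_harmless} (which rests on {\bf H1}--{\bf H4}) gives $|\Delta_{y\ra b}\brk\ell|\leq0.01<\tfrac12$ for every such $y$, so the product $\prod_{y}\mu_{y\ra b}(\cdot)\brk\ell$ is bounded away from $1$ and $\mu_{b\ra x}(0)\brk\ell>0$. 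The same argument also supplies the quantitative bound $\bigl|\mu_{b\ra x}(1)\brk\ell/\mu_{b\ra x}(0)\brk\ell-1\bigr|\leq\exp(-k_1/2)$ that the paper uses in~(\ref{eqTB1}) to control the $\leq100$ clauses of $T_3\brk\ell$ in its Case~2 — a bound you also need for $P_{>1}$ and do not establish; ``using H2 term-by-term'' does not give a per-clause ratio estimate, since {\bf H2} is a sum-over-clauses condition on $x$, not a bound on any individual factor.

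The second is your ``gap'' regime. For clauses with $k_1<|N(b)\setminus T\brk\ell|<0.01\theta k$ you bound each contribution to $\ln P_{>1}$ by $2^{-k_1+O(1)}$ and invoke {\bf T2b} negated for the aggregate. But {\bf T2b} negated only gives $\sum_{b\in N(x)}2^{-|N(b)|}\leq(\delta(\theta k)^3)^{-1}$, which does not bound the \emph{number} of clauses in $N(x)$: with $|N(b)|\leq10\theta k$ it permits up to order $2^{10\theta k}/(\delta(\theta k)^3)$ of them, and multiplying by $2^{-k_1}=2^{-\sqrt c\,\theta k}$ yields a divergent quantity rather than something below $0.01\delta$ when $c$ is small. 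The quantity that must be controlled is $\sum_b2^{|N(b)\cap T\brk\ell\setminus x|-|N(b)|}$, which is precisely what {\bf T2c} negated bounds (cf.~(\ref{eqTB4}) in the paper), and is not what you have estimated.
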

Furthermore, in \Sec~\ref{Sec_Tbound} we will establish the following bound on the size of $T\brk\ell$.

\begin{proposition}\label{Prop_Tbound}
We have $\abs{T\brk\ell}<\delta\theta n$ for all $\ell\geq0$.
\end{proposition}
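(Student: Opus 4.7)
\medskip
\noindent\emph{Proof proposal.}
The plan is to argue by induction on $\ell$, establishing the slightly stronger statement that each of $|T_1\brk\ell|$, $|T_2\brk\ell|$, and $|N(T_3\brk\ell)|$ is at most $0.03\,\theta\delta n$, say. For the base case $\ell=0$ we have $T_1\brk0=T_3\brk0=\emptyset$ by construction, while $T_2\brk0$ consists of variables that either sit on a clause violating {\bf Q0} (redundant clauses or very long/short clauses) or violate the weight inequality in {\bf Q1}; both of these are directly bounded by Definition~\ref{Def_reasonable} applied with $Q=\emptyset$.

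For the inductive step, assume $|T\brk s|\le 0.1\theta\delta n$ for all $s\le\ell$ and treat the three constituents of $T\brk{\ell+1}$ separately. The set $T_2\brk{\ell+1}$ breaks up along {\bf T2a}--{\bf T2e}. Conditions {\bf T2a}, {\bf T2b} again contribute at most $o(\theta\delta n)$ variables by {\bf Q0}, {\bf Q1}, independently of $\ell$. For {\bf T2c} we apply {\bf Q2} with $Q=T\brk\ell$, which is legitimate because $|T\brk\ell|\le\delta(n-t)$ by the inductive hypothesis; this gives at most $10^{-4}\delta(n-t)$ offending variables. For {\bf T2d}, double counting yields $\sum_{x}|\{a\in T_3\brk\ell : x\in N(a)\}|\le 10\theta k\cdot|T_3\brk\ell|$, and $|T_3\brk\ell|$ is already under control by the inductive bound on $|N(T_3\brk\ell)|$ together with {\bf G1}-type clause-length bounds, so the number of variables lying in more than $100$ clauses of $T_3\brk\ell$ is tiny. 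Finally {\bf T2e} reduces to showing that the exceptional set $V_t\setminus H\brk\ell$ is small, but this exceptional set is contained in the union of variables violating one of {\bf H1}--{\bf H4}, each of which is either already accounted for in $T_2\brk\ell$ or bounded by {\bf Q2}. To bound $N(T_3\brk{\ell+1})$, note that every $b\in T_3\brk{\ell+1}$ satisfies $|N(b)\cap T\brk\ell|\ge (1-1/100)|N(b)|$ and $|N(b)|\ge 100k_1$; invoking {\bf Q3} with $z=0.99$ and $Q=T\brk\ell$ gives $\sum_{b\in T_3\brk{\ell+1}}|N(b)|\le 1.02\,|T\brk\ell|$, and hence $|N(T_3\brk{\ell+1})|$ is bounded by the same quantity, which is well below $0.03\theta\delta n$.

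The delicate piece is $T_1\brk{\ell+1}$, where we must show that for all but very few variables $x\in V_t$ and every $a\in N(x)$ the product $P_{\le1}(x\to a)\brk{\ell+1}$ lies within $0.01\delta$ of $1$. Here I would follow the linearisation strategy of Lemma~\ref{Lemma_Pleq1} and Lemma~\ref{Prop_P0X}: restrict to $(\ell{+}1)$-harmless $x$ (which costs at most a negligible fraction by the $T_2$ bound just obtained), and then approximate $-\ln P_{\le1}(x\to a)\brk{\ell+1}$ by the linear functional $L_{x\to a}$ of~(\ref{eqLxa}). The constant term $\sum_{b}2^{1-|N(b)|}\sign(x,b)$ is $\le 0.002\delta$ by {\bf H4}, so the whole problem comes down to controlling the bilinear piece $\xi_{x\to a}$ of~(\ref{eqxixaX}). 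Projecting the message vector $\Delta\brk\ell$ down to a single vector $D\in\RR^{V_{t+1}}$ indexed by variables (choosing one representative message per variable as in the Section~5 sketch), bound~(\ref{eqDDelta}) allows one to replace $\xi_{x\to a}$ by $(\Lambda_{T\brk\ell}D)_x$ up to additive error $O(\delta^3)$, using only {\bf G1} to handle the short-clause discrepancy $|N(a)|2^{-|N(a)|}\le\delta^2$. Finally, {\bf Q4} applied with $Q=T\brk\ell$ yields $\|\Lambda_{T\brk\ell}D\|_1\le\delta^4(n-t)$, so Markov implies $|\{x:|(\Lambda_{T\brk\ell}D)_x|>\delta^2\}|\le\delta^2(n-t)\ll 0.03\theta\delta n$.

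The main obstacle will be the bookkeeping around the inductive step for $T_1\brk{\ell+1}$: one needs $|T\brk\ell|\le\delta(n-t)$ (or better) to legitimately invoke {\bf Q2}--{\bf Q4}, and this must be re-established strictly inside the inductive bound $0.1\theta\delta n$ on $|T\brk{\ell+1}|$. A second subtle point is the case where some $\mu_{b\to x}(0)\brk\ell$ vanishes, which we handle by absorbing any variable incident to such a clause into $T_2\brk{\ell+1}$ via {\bf T2e} (harmless variables only touch long clauses with many non-$T\brk\ell$ neighbours, and on such clauses the $\mu_{b\to x}(0)\brk\ell$ are bounded below by an explicit constant by a short recursion on $\ell$). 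Once all three constituents are summed, each occupying a small fraction of the allowed budget $0.1\theta\delta n$, the induction closes.
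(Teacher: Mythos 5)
Your overall strategy is the same as the paper's: induction on $\ell$, bounding $|T_1\brk\ell|$, $|T_2\brk\ell|$, and $|N(T_3\brk\ell)|$ separately using the quasirandomness conditions {\bf Q0}--{\bf Q4}, with {\bf Q4} driving the $T_1$ bound through the linearisation/projection machinery. But there is a genuine gap in the way you close the induction for $N(T_3\brk{\ell+1})$.

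You invoke {\bf Q3} with $Q=T\brk\ell$ to get $\sum_{b\in T_3\brk{\ell+1}}|N(b)|\le 1.02\,|T\brk\ell|$, and then conclude $|N(T_3\brk{\ell+1})|\le 1.02\,|T\brk\ell|$. Under your own inductive hypothesis $|T\brk\ell|\le 0.09\,\theta\delta n$ (or even the weaker $0.1\,\theta\delta n$ of the Proposition), this only yields $|N(T_3\brk{\ell+1})|\lesssim 0.1\,\theta\delta n$, which does not fit into the per-component budget of $0.03\,\theta\delta n$ you require. The factor $1.02$ is an \emph{expansion}, not a contraction, so this step alone cannot self-improve; the induction fails to close, no matter how you rebalance the three constant budgets. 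The paper gets around this by exploiting two structural facts that you do not use: (i) $T_3\brk{\ell+1}$ is \emph{disjoint} from $T_3\brk\ell$ by definition, and (ii) $N(T_3\brk\ell)\subset T\brk\ell$. Applying {\bf Q3} to the union $T_3\brk\ell\cup T_3\brk{\ell+1}$ and subtracting, one obtains
\[
|N(T_3\brk{\ell+1})|\;\le\;1.03\,|T_1\brk\ell\cup T_2\brk\ell|\;+\;0.03\,|N(T_3\brk\ell)|,
\]
so the coefficient of $|N(T_3\brk\ell)|$ on the right is now $0.03$ rather than $\approx 1$, and the induction does contract. You need this cancellation; without it the argument is not merely loose but actually fails.

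A secondary imprecision: for {\bf T2e} you claim the set of non-harmless variables is ``bounded by {\bf Q2}'', but the violations of condition {\bf H3} (two or more clauses with $|N(b)\setminus T\brk\ell|\le k_1$ through one variable) are not controlled by {\bf Q2} at all; they require another application of {\bf Q3}, counting clauses whose variable set sits almost entirely inside $N(T_3\brk\ell\cup T_3\brk{\ell+1})$ and then bounding the number of vertices of degree $\ge2$ in that sub-hypergraph. The rest of your outline (base case via {\bf Q0}/{\bf Q1}; {\bf T2a}--{\bf T2c} via {\bf Q0}--{\bf Q2}; the $T_1$ bound via linearisation, projection, and {\bf Q4}) matches the paper's argument.
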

Finally, in \Sec~\ref{Sec_finish} we will derive \Thm~\ref{Thm_dynamics} from \Prop~\ref{Prop_TB} and \Prop~\ref{Prop_Tbound}.

\subsection{Proof of \Prop~\ref{Prop_TB}}\label{Sec_TB}

The proof will be by induction on $\ell$.
We begin with an elementary estimate of the messages $\mu_{b\ra x}$ from clauses to variables.

\begin{lemma}\label{Lemma_P1T}
Let $x$ be a variable and let $b\in N(x)$ be a clause.
Let $t_b=\abs{N(b)\cap B\brk\ell\setminus\cbc x}$.
Then
	$$\abs{1-\mu_{b\ra x}^{\brk{\ell}}(\zeta)}\leq
		2^{2-|N(b)|+t_b}\exp(\delta|N(b)|)
			\quad\mbox{ for $\zeta=\pm1$}.$$
Furthermore, if $2^{2-|N(b)|+t_b}\exp(\delta|N(b)|)|\leq1/2$, then
	$$\exp\brk{-2^{3-|N(b)|+t_b}\exp(\delta|N(b)|)}\leq\mu_{b\ra x}^{\brk{\ell}}(\zeta)\leq1\quad\mbox{ for $\zeta=\pm1$}.$$
\end{lemma}
\begin{proof}
Since for any $y\in N(b)\setminus\cbc x$ we have
$\mu_{y\ra b}^{\brk\ell}(1)=\frac12+\Delta_{y\ra b}^{\brk\ell}$ and
$\mu_{y\ra b}^{\brk\ell}(-1)+\mu_{y\ra b}^{\brk\ell}(1)=1$, we see that
	$$\mu_{y\ra b}^{\brk\ell}\bc{-\sign\bc{y,b}}=\frac12-\sign\bc{y,b}\Delta_{y\ra b}^{\brk\ell}.$$
Therefore, by the definition~(\ref{eqBPaiell}) of $\mu_{b\ra x}^{\brk\ell}(\pm1)$, we have
	\begin{eqnarray*}
	0&\leq&1-\mu_{b\ra x}^{\brk\ell}(-\sign(x,b))
		=\prod_{y\in N(b)\setminus\cbc x}\frac12-\sign\bc{y,b}\Delta_{y\ra b}^{\brk\ell}\\
	&=&2^{1-|N(b)|}\prod_{y\in N(b)\setminus\cbc x}1-2\sign\bc{y,b}\Delta_{y\ra b}^{\brk\ell}\\
	&\leq&2^{1-|N(b)|}\cdot 2^{t_b}\cdot\prod_{y\in N(b)\setminus(\cbc x\cup B\brk\ell)}1+2|\Delta_{y\ra b}^{\brk\ell}|
			\quad\qquad\mbox{ [as $\Delta_{y\ra b}^{\brk\ell}\in\brk{-1/2,1/2}$ for all $y$]}\\
	&\leq&2^{1-|N(b)|}\cdot 2^{t_b}\cdot\exp\brk{2\sum_{y\in N(b)\setminus(\cbc x\cup B\brk\ell)}|\Delta_{y\ra b}^{\brk\ell}|}\\
	&\leq&2^{1-|N(b)|+t_b}\cdot\exp(|N(b)|\delta)\qquad\qquad\qquad\qquad
		\qquad\mbox{[as $|\Delta_{y\ra b}^{\brk\ell}|\leq0.1\delta$ for all $y\not\in B\brk\ell$]}.
	\end{eqnarray*}
The second assertion follows from the elementary inequality $1-z\geq\exp(-2z)$ for $0\leq z\leq1/2$.
\qed\end{proof}

\begin{corollary}\label{Cor_P1T}
Let $x$ be a variable and let $\cT\subset N(x)$ be a set of clauses.
For each $b\in\cT$ let $t_b=\abs{N(b)\cap B\brk\ell\setminus\cbc x}$.
Assume that $t_b<|N(b)|-2$  and $|N(b)|\leq10\theta k$ for all $b\in\cT$.
Then $\mu_{b\ra x}^{\brk\ell}(\pm1)>0$ for all $b\in\cT$ and
	$$\abs{\ln\prod_{b\in\cT}\frac{\mu_{b\ra x}^{\brk\ell}(1)}{\mu_{b\ra x}^{\brk\ell}(-1)}}\leq
			\sum_{b\in\cT}2^{4-|N(b)|+t_b}
				.$$
\end{corollary}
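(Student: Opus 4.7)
The plan is to take the logarithm of the product, use the triangle inequality to reduce to a per-clause bound, and estimate each individual logarithm using \Lem~\ref{Lemma_P1T}. The key structural remark is that from~(\ref{eqBPai}), for each clause $b\in\cT$ exactly one of the two messages $\mu_{b\ra x}(0)\brk{\ell}, \mu_{b\ra x}(1)\brk{\ell}$ equals $1$ (namely the value $\zeta_b=(1+\sign(x,b))/2$), while the other lies in $[0,1]$. Consequently
$$\ln\frac{\mu_{b\ra x}(1)\brk{\ell}}{\mu_{b\ra x}(0)\brk{\ell}}=\pm\ln \mu_{b\ra x}(1-\zeta_b)\brk{\ell},$$
so the per-clause task reduces to bounding a single logarithm of a value close to $1$.

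First I would apply \Lem~\ref{Lemma_P1T} to obtain $\abs{1-\mu_{b\ra x}(1-\zeta_b)\brk{\ell}}\leq \eps_b$ with $\eps_b:=2^{2-|N(b)|+t_b}\exp(\delta|N(b)|)$. The hypothesis $t_b<|N(b)|-2$ (for integer $t_b$) immediately yields $2^{2-|N(b)|+t_b}\leq 1/2$. Combined with the ambient regime of \Thm~\ref{Thm_dynamics} (where $|N(b)|\leq 10\theta k$ and $\delta=\exp(-c\theta k)$ with $c$ small and $\theta k\geq\rho_0$ large, so that $\delta|N(b)|=o(1)$), this gives $\eps_b\leq 1/2<1$. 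In particular $\mu_{b\ra x}(0)\brk{\ell}\geq 1-\eps_b>0$, settling the positivity statement.

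Second, the elementary inequality $|\ln(1-y)|\leq 2|y|$ valid for $|y|\leq 1/2$, applied to $y=1-\mu_{b\ra x}(1-\zeta_b)\brk{\ell}$, produces
$$\abs{\ln\frac{\mu_{b\ra x}(1)\brk{\ell}}{\mu_{b\ra x}(0)\brk{\ell}}}=\abs{\ln \mu_{b\ra x}(1-\zeta_b)\brk{\ell}}\leq 2\eps_b=2^{3-|N(b)|+t_b}\exp(\delta|N(b)|).$$
Taking logs of the product, applying the triangle inequality, and summing over $b\in\cT$ finishes the proof. The only mildly delicate step is the positivity $\mu_{b\ra x}(0)\brk{\ell}>0$, and this is exactly what the hypothesis $t_b<|N(b)|-2$ secures modulo the harmless factor $\exp(\delta|N(b)|)$; the rest is a routine per-clause estimate.
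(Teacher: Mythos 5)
Your argument is correct and follows essentially the same route as the paper: apply \Lem~\ref{Lemma_P1T} to each clause $b\in\cT$ to get a per-clause bound of the form $2^{O(1)-|N(b)|+t_b}\exp(\delta|N(b)|)$ on $|\ln(\mu_{b\ra x}(1)\brk\ell/\mu_{b\ra x}(0)\brk\ell)|$, then sum (equivalently, multiply the per-clause ratios and exponentiate). Your explicit observation that exactly one of $\mu_{b\ra x}(0)\brk\ell,\mu_{b\ra x}(1)\brk\ell$ equals $1$ by~(\ref{eqBPai}), together with the hypothesis $t_b<|N(b)|-2$ forcing $2^{2-|N(b)|+t_b}\leq\tfrac12$, is a slightly cleaner way to obtain both the positivity and the factor $2$ that accounts for the jump from $2^{2-\cdots}$ in \Lem~\ref{Lemma_P1T} to $2^{3-\cdots}$ in the corollary.
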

\begin{proof}
For each $b\in\cT$ there is $y\in N(b)\setminus\cbc x$ such that $y\not\in B\brk\ell$,
	because $t_b<|N(b)|-2$.
Therefore, (\ref{eqBPaiell}) shows that $\mu_{b\ra x}^{\brk\ell}(\pm1)>0$.
Since by definition $\mu_{b\ra x}^{\brk\ell}(\zeta)\leq1$ for $\zeta=\pm1$,
\Lem~\ref{Lemma_P1T} implies that for any $b\in\cT$ and we have
	\begin{eqnarray}\label{eqMultiplyMe1}
		\frac{\mu_{b\ra x}^{\brk\ell}(\zeta)}{\mu_{b\ra x}^{\brk\ell}(-\zeta)}
		&\geq&
		1-2^{2-|N(b)|+t_b}\exp(\delta|N(b)|).
	\end{eqnarray}
Our assumptions $t_b<|N(b)|-2$ and $|N(b)|\leq10\theta k$
ensure that
	$$2^{2-|N(b)|+t_b}\leq1/2\quad\mbox{ and }\quad\exp(\delta|N(b)|)\leq1.1,$$
whence $2^{2-|N(b)|+t_b}\exp(\delta|N(b)|)\leq0.6$.
Due to the elementary inequality $1-z\geq\exp(-2z)$ for $z\in\brk{0,0.6}$, (\ref{eqMultiplyMe1}) thus yields
	\begin{eqnarray}\label{eqMultiplyMe2}
		\frac{\mu_{b\ra x}^{\brk\ell}(\zeta)}{\mu_{b\ra i}^{\brk\ell}(-\zeta)}&\geq&\exp\brk{-2^{3-|N(b)|+t_b}\exp(\delta|N(b)|)}
			\geq\exp\brk{-2^{4-|N(b)|+t_b}}.
	\end{eqnarray}
Multiplying~(\ref{eqMultiplyMe2}) up over $b\in\cT$ and taking logarithms yields
	\begin{eqnarray}\label{eqMultiplyMe3}
		\ln\prod_{b\in\cT}\frac{\mu_{b\ra x}^{\brk\ell}(\zeta)}{\mu_{b\ra x}^{\brk\ell}(-\zeta)}&\geq&-\sum_{b\in\cT}2^{3-|N(b)|+t_b}\exp(\delta|N(b)|).
	\end{eqnarray}
Since~(\ref{eqMultiplyMe3}) holds for both $\zeta=-1$ and $\zeta=1$, the assertion follows.
\qed\end{proof}

\begin{corollary}\label{Cor_harmless}
Suppose that $x\in H\brk\ell$ and that $a\in N(x)$ is a clause such that $|N(a)\setminus T\brk{\ell-1}|\leq k_1$.
Moreover, assume that $B\brk{\ell-1}\subset T\brk{\ell-1}$.
Then $|\Delta_{x\ra a}^{\brk\ell}|\leq0.01$.
\end{corollary}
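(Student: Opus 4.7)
My plan is to bound $\Delta_{x\ra a}\brk\ell=\mu_{x\ra a}(1)\brk\ell-\tfrac12$ through the log-ratio
$\ln R$, where $R:=\prod_{b\in N(x)\setminus\cbc a}\mu_{b\ra x}(1)\brk{\ell-1}/\mu_{b\ra x}(0)\brk{\ell-1}$. Since the BP update~(\ref{eqBPia}) gives $\mu_{x\ra a}(1)\brk\ell=R/(1+R)$, so that $|\Delta_{x\ra a}\brk\ell|=|R-1|/(2(R+1))$, the target bound $|\Delta_{x\ra a}\brk\ell|\leq 0.01$ follows from $|\ln R|\leq 0.04$. To ensure all denominators are strictly positive I would first apply H3 to the distinguished clause $a$: since $a$ is the \emph{unique} clause in $N(x)$ with $|N(a)\setminus T\brk{\ell-1}|\leq k_1$, every other $b\in N(x)\setminus\cbc a$ satisfies $|N(b)\setminus T\brk{\ell-1}|>k_1$; by \Prop~\ref{Prop_TB} this yields $t_b:=|N(b)\cap B\brk{\ell-1}\setminus\cbc x|\leq|N(b)|-1-k_1<|N(b)|-2$ for $\rho\geq\rho_0$, so \Cor~\ref{Cor_P1T} applies throughout and $\mu_{b\ra x}(0)\brk{\ell-1}>0$ for all $b\neq a$.

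Following the decomposition of \Sec~\ref{Sec_dynamics} I would split $\ln R=\ln P_{\leq1}(x\ra a)\brk\ell+\ln P_{>1}(x\ra a)\brk\ell$ using~(\ref{eqNxa}). For $\ln P_{>1}$ the bound is almost immediate: \Cor~\ref{Cor_P1T} together with the crude $t_b\leq|N(b)\cap T\brk{\ell-1}\setminus\cbc x|$ and the second inequality of H2 yields $|\ln P_{>1}|\leq 16\delta/(\theta k)$, negligible. For $\ln P_{\leq1}$ I would Taylor-expand each factor: when $\sign(x,b)=+1$ we have $\mu_{b\ra x}(1)\brk{\ell-1}=1$ and $\mu_{b\ra x}(0)\brk{\ell-1}=1-\prod_{y\in N(b)\setminus\cbc x}(\tfrac12-\sign(y,b)\Delta_{y\ra b}\brk{\ell-1})$ (and symmetrically otherwise), whence $\ln(\mu_{b\ra x}(1)/\mu_{b\ra x}(0))=\sign(x,b)\cdot 2^{-|N(b)|}+\eps_b$. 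On clauses $b$ containing no biased variable in $N(b)\setminus\cbc x$, the bound $|\Delta_{y\ra b}|\leq 0.1\delta$ together with $|N(b)|\leq 10\theta k$ from H1 gives $|\eps_b|\leq C\theta k\cdot\delta\cdot 2^{-|N(b)|}$, while on clauses $b\in N_1(x,T\brk{\ell-1})$ with one biased $y^*$ one obtains only $|\eps_b|\leq C\cdot 2^{-|N(b)|}$. Summing, the leading terms reproduce, up to the single boundary term $\sign(x,a)\cdot 2^{-|N(a)|}\leq 2^{-0.1\theta k}$ from the excluded clause $a$, the H4 sum, hence are at most $0.01\delta$; the unbiased errors total $O(\theta k\cdot\delta\cdot\delta^{-1}(\theta k)^{-3})=O((\theta k)^{-2})$ by H1, and the biased errors total $O((\theta k)^5\delta)$ by the first inequality of H2. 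Because $\delta=\exp(-c\theta k)$, every piece is $o(1)$ for $\theta k\geq\rho\geq\rho_0$, so altogether $|\ln R|\leq 0.04$.

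The main technical obstacle I anticipate is the per-clause estimate $|\eps_b|\leq C\cdot 2^{-|N(b)|}$ on clauses harbouring a single biased neighbour $y^*$: in the extreme case $|\Delta_{y^*\ra b}\brk{\ell-1}|=\tfrac12$ the factor $(1-2\sign(y^*,b)\Delta_{y^*\ra b}\brk{\ell-1})$ ranges over $\brk{0,2}$ and can vanish, so one must carefully argue that the remaining factors $(\tfrac12-\sign(y,b)\Delta_{y\ra b}\brk{\ell-1})$ for unbiased $y$ still multiply to within a constant of $2^{1-|N(b)|}$, keeping $\mu_{b\ra x}(\zeta)\brk{\ell-1}$'s deviation from $1$ of order $2^{-|N(b)|}$ uniformly; only under this uniform per-clause bound is H2's $(\theta k)^5\delta$ cap strong enough to absorb the biased-clause losses into the final $o(1)$ total.
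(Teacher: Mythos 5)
Your proposal follows essentially the same route as the paper's proof: both split $\ln R=\ln P_{\leq 1}+\ln P_{>1}$, bound $\ln P_{>1}$ via \Cor~\ref{Cor_P1T} together with \textbf{H2}'s second inequality, and bound $\ln P_{\leq 1}$ by isolating the signed-weight sum $\sum_{b}\sign(x,b)2^{1-|N(b)|}$ (controlled by \textbf{H4}) from the Taylor-remainder terms (controlled by \textbf{H1} and \textbf{H2}). The one obstacle you flag, namely whether a single biased neighbour could spoil the per-clause estimate on $N_1$-clauses, is already settled by \Lem~\ref{Lemma_P1T}: the bound $|1-\mu_{b\ra x}(\zeta)|\leq2^{2-|N(b)|+t_b}\exp(\delta|N(b)|)$ with $t_b=1$ gives exactly the uniform $O(2^{-|N(b)|})$ control you need, because the biased factor is crudely bounded by $2$ while the remaining unbiased factors stay within $\exp(\pm O(\delta|N(b)|))$ of their nominal values, so \textbf{H2}'s $(\theta k)^5\delta$ cap does absorb the $N_1$ contribution.
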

\begin{proof}
For each $b\in N(x)\setminus\cbc a$ let $t_b=\abs{N(b)\cap B\brk{\ell-1}\setminus\cbc x}$.
Then our assumption that $B\brk{\ell-1}\subset T\brk{\ell-1}$ and condition {\bf H3} ensure that for any $b\in N(x)\setminus\cbc a$,
	$$t_b\leq\abs{N(b)\cap T\brk{\ell-1}}\leq|N(b)|-k_1<|N(b)|-2.$$
Furthermore, by {\bf H1} we have $0.1\theta k\leq|N(b)|\leq10k\theta$ for all $b\in N(x)\setminus\cbc a$.
Therefore, \Cor~\ref{Cor_P1T} applies to the set $\cT=N_{>1}(x,T\brk\ell)\setminus\cbc a.$
Since \Cor~\ref{Cor_P1T} yields $\mu_{b\ra x}^{\brk{\ell-1}}\bc0>0$ for all $b\in\cT$, we have
	$\cT=N_{>1}^{\brk\ell}(x\ra a)$, 
	 and thus
	\begin{eqnarray}\label{eqP>1bound}
	|\ln P_{>1}^{\brk\ell}\bc{x\ra a}|&=&
		\abs{\ln\prod_{b\in\cT}\frac{\mu_{b\ra x}^{\brk\ell}(1)}{\mu_{b\ra x}^{\brk\ell}(-1)}}\leq
			\sum_{b\in\cT}2^{4-|N(b)|+t_b}.
	\end{eqnarray}
Moreover, {\bf H2} ensures that
	$\sum_{b\in\cT}2^{t_b-|N(b)|}\leq\delta$, 
whence~(\ref{eqP>1bound}) entails
	\begin{equation}\label{eqWorkItOut}
	|P_{>1}^{\brk\ell}\bc{x\ra a}-1|\leq 10^{-4}.
	\end{equation}

Furthermore,  by {\bf H1} all clauses $b\in N(x)$ have lengths $0.1\theta k\leq |N(b)|\leq10\theta k$.
Moreover, for all $b\in N(x)\setminus\cbc a$ we have $|N(b)\setminus T\brk{\ell-1}|\geq k_1$ by {\bf H3},
and thus $N_{1}(x,T\brk{\ell-1})\subset N_{\leq1}(x,T\brk{\ell-1})$.
Further, since $|N(a)\cap T\brk{\ell-1}|>1$ by assumption, 
we have 
	$$N_{\leq 1}(x\ra a)\brk\ell=N_{\leq1}(x,T\brk{\ell-1}).
		$$
Hence, letting $\cN=N_{\leq1}(x,T\brk{\ell-1})\setminus N_{1}(x,T\brk{\ell-1})$, we have
	\begin{eqnarray}\label{eqWorkItOut0}
	\abs{P_{\leq1}\brk\ell\bc{x\ra a}-1}&=&\prod_{b\in\cN}
			\frac{\mu_{b\ra x}^{\brk{\ell-1}}(1)}{\mu_{b\ra x}^{\brk{\ell-1}}(-1)}\cdot
			\prod_{b\in N_{1}(x,T\brk{\ell-1})}	\frac{\mu_{b\ra x}^{\brk{\ell-1}}(1)}{\mu_{b\ra x}^{\brk{\ell-1}}(-1)}.
	\end{eqnarray}
With respect to the second product, 
\Cor~\ref{Cor_P1T} yields
	\begin{eqnarray}\label{eqWorkItOut10}
	\abs{\ln\prod_{b\in N_{1}(x,T\brk{\ell-1})}	\frac{\mu_{b\ra x}^{\brk{\ell-1}}(1)}{\mu_{b\ra x}^{\brk{\ell-1}}(-1)}}
		&\leq&\sum_{b\in N_1(x,T\brk{\ell-1})}2^{5-|N(b)|}\\
		&\stacksign{\bf H2}{\leq}&32\rho(\theta k)^5\delta\leq10^{-6}\quad\mbox{ [as $\delta=\exp(-c\theta k)$ with $\theta k\geq\ln(\rho)/c^2$]}.\nonumber
	\end{eqnarray}

Furthermore, for any $b\in\cN$ we have
	\begin{eqnarray}\nonumber
	\mu_{b\ra x}^{\brk{\ell-1}}(-\sign(x))&=&
		1-\prod_{y\in N(b)\setminus\cbc x}\frac12-\sign\bc{y,b}\Delta_{y\ra b}^{\brk{\ell-1}}\\
	&=&1-2^{1-|N(b)|}\prod_{y\in N(b)\setminus\cbc x}1-2\sign\bc{y,b}\Delta_{y\ra b}^{\brk{\ell-1}}.\label{eqWorkItOut1}
	\end{eqnarray}
Since $b\in\cN$, for all $y\in N(b)\setminus\cbc x$ we have $y\not\in B\brk{\ell-1}\subset T\brk{\ell-1}$,
and thus $|\Delta_{y\ra b}^{\brk{\ell-1}}|\leq0.1\delta$.
Moreover, $|N(b)|\leq10k\theta$ by {\bf H1}.
Thus, letting
	$$\alpha_b=1-\prod_{y\in N(b)\setminus\cbc x}1-2\sign\bc{y,b}\Delta_{y\ra b}^{\brk{\ell-1}},$$
we find
	\begin{equation}
	0\leq\alpha_b\leq1-(1-0.2\delta)^{|N(b)|}\leq8\delta k\theta.
	\label{eqWorkItOut2}
	\end{equation}
Since $|N(b)|\geq0.1k\theta$ by {\bf H1}, (\ref{eqWorkItOut1}) thus yields
	\begin{equation}
	\mu_{b\ra x}^{\brk{\ell-1}}(-\sign(x))\geq1-2^{1-|N(b)|}(1-\delta k\theta)\geq0.99.
	\label{eqWorkItOut3}
	\end{equation}
Using the elementary inequality $-z-z^2\leq\ln(1-z)\leq-z$ for $0\leq z\leq0.5$, we obtain from~(\ref{eqWorkItOut1}), (\ref{eqWorkItOut2})
	and~(\ref{eqWorkItOut3})
	\begin{eqnarray*}
	\ln\mu_{b\ra x}^{\brk{\ell-1}}(-\sign(x))&\leq&-2^{1-|N(b)|}(1-\alpha_b)\leq-2^{1-|N(b)|}(1-8k\theta\delta),\\
	\ln\mu_{b\ra x}^{\brk{\ell-1}}(-\sign(x))&\geq&-2^{1-|N(b)|}(1-\alpha_b)-2^{2(1-|N(b)|)}(1-\alpha_b)^2\\
			&\geq&-2^{1-|N(b)|}(1+8k\theta\delta)\qquad\qquad\mbox{[as $|N(b)|\geq0.1k\theta$ by {\bf H1}]}.
	\end{eqnarray*}
Summing these bounds up for $b\in\cN$, we obtain
	\begin{eqnarray}
	\abs{\ln\prod_{b\in\cN}
		\frac{\mu_{b\ra x}^{\brk{\ell-1}}(1)}{\mu_{b\ra x}^{\brk{\ell-1}}(-1)}}
	&\leq&\abs{\sum_{b\in\cN}\sign(x,b)2^{1-|N(b)|}}+8k\delta\sum_{b\in\cN}2^{1-|N(b)|}\nonumber\\
	&\leq&2\abs{\sum_{b\in\cN}\sign(x,b)2^{-|N(b)|}}+2(k\theta)^{-3}\qquad\mbox{[by {\bf H1}]}\nonumber\\
	&\leq&2\abs{\sum_{b\in N_{\leq1}(x,T\brk{\ell-1})}\sign(x,b)2^{-|N(b)|}}+2(k\theta)^{-3}+\sum_{x\in N_1(x,T\brk{\ell-1})}2^{1-|N(b)|}\nonumber\\
	&\leq&0.02\delta+2(k\theta)^{-3}+\rho(\theta k)^5\delta\qquad\qquad\quad\mbox{[by {\bf H2}, {\bf H4}]}\nonumber\\
	&\leq&10^{-6}\qquad\qquad\qquad\qquad\mbox{[because $\delta=\exp(-ck\theta)$ and $k\theta\geq\ln(\rho)/c^2]$}.
		\label{eqWorkItOut20}
	\end{eqnarray}

Plugging~(\ref{eqWorkItOut10}) and~(\ref{eqWorkItOut20}) into~(\ref{eqWorkItOut0}), we see that
	$\abs{P_{\leq1}^{\brk\ell}\bc{x\ra a}-1}\leq10^{-5}$, while $\abs{P_{\leq1}^{\brk\ell}\bc{x\ra a}-1}\leq10^{-4}$ by~(\ref{eqWorkItOut}).
Therefore, (\ref{eqidea}) yields
	$$\abs{1-\frac{1+2\Delta_{x\ra a}^{\brk\ell}}{1-2\Delta_{x\ra a}^{\brk\ell}}}=
		\abs{1-\frac{\mu_{x\ra a}^{\brk\ell}(1)}{\mu_{x\ra a}^{\brk\ell}(-1)}}\leq3\cdot 10^{-4},$$
whence $\abs{\Delta_{x\ra a}^{\brk\ell}}\leq0.01$, as desired.
\qed\end{proof}

\begin{corollary}\label{Cor_clause2var}
Let $b$ be a clause such that $N(b)\not\subset T\brk\ell$.
Let $x\in N(b)$.
Assume that $B\brk{\ell-1}\subset T\brk{\ell-1}$.
Then
	$$\mu_{b\ra x}^{\brk{\ell-1}}(-1)>0\mbox{ and }
	\abs{\frac{\mu_{b\ra x}^{\brk{\ell-1}}(1)}{\mu_{b\ra x}^{\brk{\ell-1}}(-1)}-1}\leq\exp\bc{-k_1/2}.$$
\end{corollary}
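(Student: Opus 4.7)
The corollary reduces to showing that
$$p = \prod_{y \in N(b)\setminus x} \mu_{y\ra b}\bc{\frac{1-\sign(y,b)}{2}}\brk{\ell-1}$$
satisfies $p \leq \tfrac12 \exp(-k_1/2)$. Indeed, the update~(\ref{eqBPai}) then forces $\cbc{\mu_{b\ra x}(0)\brk{\ell-1}, \mu_{b\ra x}(1)\brk{\ell-1}} = \cbc{1, 1-p}$, so $\mu_{b\ra x}(0)\brk{\ell-1} \geq 1-p > 0$, and the ratio $\mu_{b\ra x}(1)\brk{\ell-1}/\mu_{b\ra x}(0)\brk{\ell-1} \in \cbc{1-p,\, 1/(1-p)}$ differs from $1$ by at most $p/(1-p) \leq 2p \leq \exp(-k_1/2)$.

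The plan is to deduce from $N(b) \not\subset T\brk\ell$ that $|N(b)\setminus B\brk{\ell-1}|>k_1$. First, since $T_2\brk 0 \subset T\brk\ell$ (relying on the monotonicity $T\brk{\ell'} \subset T\brk{\ell'+1}$, which must be maintained inductively alongside the whole construction), any clause with $|N(b)|\notin[0.1\theta k,10\theta k]$ would have $N(b)\subset T_2\brk 0 \subset T\brk\ell$, contradicting the hypothesis; hence $|N(b)|\in[0.1\theta k,10\theta k]$, and choosing the constant $c$ small enough that $100\sqrt c\leq 0.1$ yields $|N(b)|\geq 100 k_1$. Second, the hypothesis also rules out $b\in T_3\brk{\ell'}$ for any $\ell'\leq\ell$, since monotonicity would then force $N(b)\subset T\brk{\ell'}\subset T\brk\ell$. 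Consequently $b$ fails the defining condition of $T_3\brk\ell$, and in view of $|N(b)|\geq 100 k_1$ the failure must be $|N(b)\setminus T\brk{\ell-1}|>k_1$. Combining with \Prop~\ref{Prop_TB} ($B\brk{\ell-1}\subset T\brk{\ell-1}$) yields $|N(b)\setminus B\brk{\ell-1}|>k_1$, so at least $k_1$ elements of $N(b)\setminus x$ lie outside $B\brk{\ell-1}$.

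The bound on $p$ is then immediate. For each such $y$, the fact that $y\notin B\brk{\ell-1}$ means $|\Delta_{y\ra b}\brk{\ell-1}|<0.1\delta$, so $\mu_{y\ra b}(\cdot)\brk{\ell-1}\leq\tfrac12(1+0.2\delta)$; for the remaining $y\in N(b)\cap B\brk{\ell-1}$ we use the trivial bound $1$. Multiplying,
$$p \leq \brk{\tfrac12(1+0.2\delta)}^{k_1} \leq 2^{-k_1}\exp(0.2\, k_1\delta).$$
Since $k_1\delta=\sqrt c\,\theta k\exp(-c\theta k)=o(1)$ and $\ln 2>\tfrac12$, this is at most $\tfrac12\exp(-k_1/2)$ provided $\rho_0$ (and hence $k_1\geq\sqrt c\,\rho_0$) is sufficiently large.

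The only delicate ingredient is the monotonicity of the sequence $(T\brk\ell)_{\ell\geq 0}$, which is implicit in the construction and should be verified as a side statement in the inductive proof of \Prop~\ref{Prop_TB}; given that, the corollary is just the short computation outlined above.
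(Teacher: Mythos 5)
There is a genuine gap in your argument: it rests on the claim that the sequence $T\brk{0}\subset T\brk{1}\subset T\brk{2}\subset\cdots$ is monotone, and that claim is not established in the paper and is in fact inconsistent with the way $T_3$ is defined. The construction sets
$$T_3\brk{\ell+1}=\cbc{a:|N(a)|\geq100k_1\wedge|N(a)\setminus T\brk{\ell}|\leq k_1}\setminus T_3\brk{\ell},$$
so $T_3\brk{\ell}\cap T_3\brk{\ell+1}=\emptyset$ by design; a clause $b\in T_3\brk{\ell-1}$ (hence $N(b)\subset N(T_3\brk{\ell-1})\subset T\brk{\ell-1}$) need not place $N(b)$ inside $T\brk{\ell}$ at all, since $b$ is explicitly removed from $T_3\brk{\ell}$ and there is no reason for $N(b)$ to fall into $T_1\brk{\ell}\cup T_2\brk{\ell}$. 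This is precisely the scenario the paper's proof has to confront: its Case~2 is the regime $|N(b)\setminus T\brk{\ell-1}|\leq k_1$, where it shows $b\in T_3\brk{\ell-1}$ (so $N(b)\subset T\brk{\ell-1}$) while $N(b)\not\subset T\brk{\ell}$ by hypothesis --- exactly the failure of monotonicity you assume away. You treat monotonicity as a side claim ``to be verified,'' but if it held, Case~2 of the paper's proof would be vacuous, which it is not.

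Your Case~1 computation (bounding the product $p$ using $|N(b)\setminus B\brk{\ell-1}|>k_1$ via $B\brk{\ell-1}\subset T\brk{\ell-1}$ and $|\Delta_{y\ra b}\brk{\ell-1}|<0.1\delta$ for unbiased $y$) is sound and matches the spirit of the paper's appeal to \Lem~\ref{Lemma_P1T}. What is missing is the other half: when $|N(b)\setminus T\brk{\ell-1}|\leq k_1$, the $k_1$ nearly-unbiased neighbours that drive your bound on $p$ simply are not available. The paper handles this by deducing $b\in T_3\brk{\ell-1}$, then using that $N(b)\not\subset T_2\brk{\ell}$ together with condition \textbf{T2e} to extract a large set $J=N(b)\cap H\brk{\ell-1}$ of $(\ell-1)$-harmless variables, to each of which \Cor~\ref{Cor_harmless} gives the much weaker bound $|\Delta_{y\ra b}\brk{\ell-1}|\leq0.01$; the product over $J$ then contributes $(2/3)^{|J|}\leq(2/3)^{|N(b)|/4}$, which still beats $\exp(-k_1/2)$. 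None of this machinery (harmless variables, \textbf{T2e}, \Cor~\ref{Cor_harmless}) appears in your argument, and it cannot be bypassed by a monotonicity shortcut.
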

\begin{proof}
We consider two cases.
\begin{description}
\item[Case 1: $\abs{N(b)\setminus T\brk{\ell-1}}>k_1$.]
		Since $N(b)\not\subset T\brk{\ell}$, we have $|N(b)|\leq10 k\theta$ (by {\bf T2a}).
		Therefore, \Lem~\ref{Lemma_P1T} yields
	$$\exp(-\exp(-0.6k_1))\leq\exp\brk{-2^{3-k_1}\exp(\delta|N(b)|)}\leq
		\mu_{b\ra x}^{\brk{\ell}}(\zeta)\leq1\quad\mbox{ for $\zeta=\pm1$},$$
	whence  the assertion follows.
\item[Case 2: $\abs{N(b)\setminus T\brk{\ell-1}}\leq k_1$.]
	Since $N(b)\not\subset T\brk\ell$, condition {\bf T2a} ensures that $0.1\theta k\leq|N(b)|\leq10\theta k$.
		The assumption $N(b)\not\subset T\brk\ell$ implies that $b\not\in T_3\brk\ell$.
		But since $|N(b)\setminus T\brk{\ell-1}|\leq k_1$, and as $|N(b)|\geq0.1\theta k\geq100k_1$,
		the only possible reason why $b\not\in T_3\brk\ell$ is that $b\in T_3\brk{\ell-1}$ (cf.\ the definition of $T_3\brk\ell$).
		As $N(b)\not\subset T_2\brk\ell$, {\bf T2e} implies
			\begin{equation}\label{eqB22}
			\abs{N(b)\cap H\brk{\ell-1}}\geq 3|N(b)|/4.
			\end{equation}
		Let $J=N(b)\cap H\brk{\ell-1}$.
		Since $b\in T_3\brk{\ell-1}$, we have $\ell\geq2$ and $|N(b)\setminus T\brk{\ell-2}|\leq k_1$.
		Therefore, \Cor~\ref{Cor_harmless} implies that $|\Delta_{y\ra b}|\leq0.01$ for all $y\in J$.
		Thus, for all $x\in N(b)$ we have
			\begin{eqnarray*}
			\mu_{b\ra x}^{\brk{\ell-1}}(-\sign(x,b))&=&1-\prod_{y\in N(b)\setminus\cbc x}\mu_{y\ra b}^{\brk{\ell-1}}(-\sign(y,b))\\
				&\geq&1-(0.501)^{|J|-1}\;\stacksign{(\ref{eqB22})}{\geq}\;1-(0.501)^{3|N(b)|/4 -1}\geq
					1-(0.501)^{0.07k\theta },
			\end{eqnarray*}
		Consequently,  $\mu_{b\ra x}^{\brk{\ell-1}}(-1)>0$ and 
			$$\abs{\frac{\mu_{b\ra x}^{\brk{\ell-1}}(1)}{\mu_{b\ra x}^{\brk{\ell-1}}(-1)}-1}
				\leq2\cdot(0.501)^{0.07k\theta }\leq\exp\bc{-\theta k/100}\leq\exp(-k_1).$$
\end{description}
Thus, we have established the assertion in either case.
\qed\end{proof}

\medskip\noindent
\emph{Proof of \Prop~\ref{Prop_TB}.}
We proceed by induction on $\ell$.
Since $B\brk0=\emptyset$ the assertion is trivial for $\ell=0$.
Thus, assume that $\ell\geq0$ and that $B\brk \ell\subset T\brk\ell$.
Let $x\in V_t\setminus T\brk{\ell+1}$.
We will prove that $x\not\in B\brk{\ell+1}$.
\Cor~\ref{Cor_clause2var} implies that
	\begin{equation}\label{eqTB1}
	\mu_{a\ra x}^{\brk{\ell}}(-1)>0\mbox{ and }
	\abs{\frac{\mu_{a\ra x}^{\brk{\ell}}(1)}{\mu_{a\ra x}^{\brk{\ell}}(-1)}-1}\leq\exp\bc{-k_1/2}
			\quad\mbox{for all $x\not\in T\brk{\ell+1}$, $a\in N(x)$.}
	\end{equation}
We claim
	\begin{equation}\label{eqTB3}
	|P_{>1}^{\brk{\ell+1}}\bc{x\ra a}-1|\leq\delta/100\quad\mbox{for all  $x\not\in T\brk{\ell+1}$, $a\in N(x)$}.
	\end{equation}
To establish~(\ref{eqTB3}), we consider two cases.
\begin{description}
\item[Case 1: $x\not\in N(T_3\brk\ell)$.]
	Let $\cT=N_{>1}^{\brk{\ell+1}}\bc{x\ra a}$ be the set of all clauses $b$ that contribute to the product $P_{>1}^{\brk{\ell+1}}\bc{x\ra a}$.
	Since $x\not\in N(T_3\brk\ell\cup T_3\brk{\ell+1})$, none of the clauses $b\in\cT$ features more than $|N(b)|-k_1$ variables from $T\brk\ell$
	(just by the definition of $T_3\brk{\ell+1}$).
	Furthermore, because $x\not\in T_2\brk{\ell+1}$, {\bf T2c} is not satisfied and thus we obtain the bound
		\begin{equation}\label{eqTB4}
		\sum_{b\in\cT}2^{\abs{N(b)\cap T\brk\ell\setminus\cbc x}-|N(b)|}
			\leq\sum_{b\in N_{>1}(x,T\brk\ell)}2^{\abs{N(b)\cap B\brk\ell\setminus\cbc x}-|N(b)|}
			\leq\delta/(\theta k)\leq\delta/10^4.
		\end{equation}
	Since $x\not\in T\brk{\ell+1}$, {\bf T2a} ensures that $|N(b)|\leq10\theta k$ for all $b\in\cT$.
	Therefore, (\ref{eqTB3}) follows  from (\ref{eqTB4}) and \Cor~\ref{Cor_P1T}.
\item[Case 2: $x\in N(T_3\brk\ell)$.]
	Let $\cT=N_{>1}^{\brk{\ell+1}}\bc{x\ra a}\setminus T_3\brk{\ell}$ be the set of
	all clauses $b$ that occur in the product  $P_{>1}^{\brk{\ell+1}}\bc{x\ra a}$, apart from the ones in $T_3\brk{\ell}$.
	Since $x\not\in T_2\brk{\ell+1}\cup N(T_3\brk{\ell+1})$, this set $\cT$ also satisfies~(\ref{eqTB4}).
	Thus, \Cor~\ref{Cor_P1T} yields
		\begin{eqnarray}\label{eqTB5}
		\abs{\ln\prod_{b\in\cT}\frac{\mu_{b\ra x}^{\brk{\ell+1}}(1)}{\mu_{b\ra x}^{\brk{\ell+1}}(-1)}}
				&\leq&\delta/10^3.
		\end{eqnarray}
	Let $\cT'=N_{>1}^{\brk{\ell+1}}\bc{x\ra a}\cap T_3\brk{\ell}$.
	As condition {\bf T2d} 
	 ensures that $\abs{\cT'}\leq\abs{N(x)
			\cap T_3\brk\ell}\leq100$,
	(\ref{eqTB1}) implies 
		\begin{eqnarray}\label{eqTB555}
		\abs{\ln\prod_{b\in\cT'}\frac{\mu_{b\ra x}^{\brk{\ell}}(1)}{\mu_{b\ra x}^{\brk{\ell}}(-1)}}&\leq&2\abs{\cT'}\exp(-k_1/2)\leq\delta/1000.
		\end{eqnarray}
	Since $N_{>1}^{\brk{\ell+1}}\bc{x\ra a}=\cT\cup\cT'$, (\ref{eqTB5}) and~(\ref{eqTB555}) yield
		$\abs{1-P_{>1}^{\brk{\ell+1}}\bc{x\ra a}}\leq\delta/100.$
\end{description}
Thus, we have established~(\ref{eqTB3}) in either case.

If $x\not\in T_1\brk{\ell+1}\subset T\brk{\ell+1}$ and $a\in N(x)$, then $\abs{P_{\leq1}^{\brk{\ell+1}}\bc{x\ra a}-1}\leq\delta/100$.
Hence, (\ref{eqTB1}) implies that for all $x\not\in T\brk{\ell+1}$ and all $a\in N(x)$ we have $\mu_{x\ra a}^{\brk{\ell+1}}(-1)>0$.
Thus, 
	\begin{eqnarray*}\label{eqTB6}
	\abs{1-\frac{\mu_{x\ra a}^{\brk{\ell+1}}(1)}{\mu_{x\ra a}^{\brk{\ell+1}}(-1)}}
		=\abs{1-P_{\leq1}^{\brk{\ell+1}}\bc{x\ra a}\cdot P_{>1}^{\brk{\ell+1}}\bc{x\ra a}}\leq\delta/99
			\quad\mbox{[by~(\ref{eqTB3})].}
	\end{eqnarray*}
Consequently, $\abs{\Delta_{x\ra a}^{\brk{\ell+1}}}<0.1\delta$, and thus $x\not\in B\brk{\ell+1}$.
\qed

\subsection{Proof of \Prop~\ref{Prop_Tbound}}\label{Sec_Tbound}

We are going to proceed by induction on $\ell$.
We begin by bounding the sizes of the sets $T_2,T_3$.

\begin{lemma}\label{Lemma_T3bound}
Assume that $\abs{T_1\brk\ell\cup T_2\brk\ell}\leq\delta\theta n/3$ and $\abs{N(T_3\brk\ell)}\leq\delta\theta n/2$.
Then $\abs{N(T_3\brk{\ell+1})}\leq\delta\theta n/2$.
\end{lemma}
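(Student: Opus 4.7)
The plan is to use the quasirandomness condition \textbf{Q3} with $Q = T[\ell]$. The two hypotheses give
$$|T[\ell]| \;\leq\; |T_1[\ell] \cup T_2[\ell]| \,+\, |N(T_3[\ell])| \;\leq\; \theta\delta n/3 + \theta\delta n/2 \;=\; 5\theta\delta n/6,$$
which lies inside the admissible window $[0.01\theta\delta n,\,100\theta\delta n]$ for \textbf{Q3} (if $|T[\ell]|$ happens to lie below the window, the bound on $|N(T_3[\ell+1])|$ is obtained trivially by considering clauses whose variables are forced to come almost entirely from a negligibly small set). By the very definition of $T_3[\ell+1]$, every $b\in T_3[\ell+1]$ satisfies $|N(b)|\geq 100k_1$ and $|N(b)\setminus T[\ell]|\leq k_1$, hence
$$|N(b)\cap T[\ell]| \;\geq\; |N(b)|-k_1 \;\geq\; (1-1/100)\,|N(b)| \;=\; 0.99\,|N(b)|.$$

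With $z = 0.99$, \textbf{Q3} applied to $Q=T[\ell]$ therefore yields
$$\sum_{b\in T_3[\ell+1]} |N(b)| \;\leq\; \sum_{b\,:\,|N(b)\cap T[\ell]|\geq 0.99|N(b)|} |N(b)| \;\leq\; \frac{1.01\,|T[\ell]|}{0.99} \;\leq\; 1.02\,|T[\ell]|.$$
Since $|N(T_3[\ell+1])| \leq \sum_{b\in T_3[\ell+1]} |N(b)|$, this already gives a first estimate. A complementary refinement splits
$$N(T_3[\ell+1]) \;=\; \bigl(N(T_3[\ell+1])\cap T[\ell]\bigr)\,\cup\,\bigl(N(T_3[\ell+1])\setminus T[\ell]\bigr),$$
where the second piece contributes at most $k_1$ variables per clause, and the total clause count is $|T_3[\ell+1]|\leq \sum|N(b)|/(100k_1)\leq 1.02\,|T[\ell]|/(100k_1)$, giving $|N(T_3[\ell+1])\setminus T[\ell]|\leq |T[\ell]|/100$. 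So, crudely, $|N(T_3[\ell+1])| \leq 1.02\,|T[\ell]|$.

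The main obstacle is that this crude route alone only delivers the constant $1.02\cdot 5/6 \approx 0.85$, which exceeds the target $1/2$. To close the gap I would exploit two facts that the crude argument ignores. First, the disjointness $T_3[\ell+1]\cap T_3[\ell]=\emptyset$: every $b\in T_3[\ell+1]$ must fail the $T_3[\ell]$-defining condition, and since $|N(b)|\geq 100k_1$ this forces $|N(b)\setminus T[\ell-1]|>k_1$, i.e.\ each such $b$ is \emph{anchored} at some variable of $T[\ell]\setminus T[\ell-1]$. Second, the $\theta\delta n/3$ bound on $|T_1[\ell]\cup T_2[\ell]|$ is extremely loose: the estimates derived from \textbf{Q0}--\textbf{Q2} in \Prop~\ref{Prop_reasonable} actually give $|T_1[\ell]\cup T_2[\ell]| = O(\theta\delta n/(\theta k))$, a lower-order contribution. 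Hence $|T[\ell]| = (1+o(1))|N(T_3[\ell])|$, and the anchoring constraint can be combined with \textbf{Q3} applied to the (much smaller) ``newly biased'' set $(T_1[\ell]\cup T_2[\ell])\cup(T[\ell]\setminus T[\ell-1])$ to show that the bulk of $N(T_3[\ell+1])$ sits inside $N(T_3[\ell])$ plus a negligible correction. Pushing these two improvements through the Q3 inequality should trim the constant safely below $1/2$ and yield the claimed bound $|N(T_3[\ell+1])|\leq \theta\delta n/2$.
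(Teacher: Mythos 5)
Your crude route and its diagnosis are both right: applying \textbf{Q3} to $T[\ell]$ alone only bounds $|N(T_3[\ell+1])|\leq 1.02\,|T[\ell]|\leq 1.02\cdot \frac56\,\theta\delta n$, which overshoots $\theta\delta n/2$. But the two repairs you sketch do not close the gap. The claim that $|T_1[\ell]\cup T_2[\ell]|=O(\theta\delta n/(\theta k))$ is not established anywhere in the paper — the proof of Lemma~\ref{Lemma_T2bound} produces contributions of genuine order $\theta\delta n$ (e.g.\ the $0.12\,\theta\delta n$ coming from {\bf T2e}, and $\theta\delta n/6$ from Proposition~\ref{Prop_T1bound}), so $|T_1[\ell]\cup T_2[\ell]|$ can really be a constant fraction of $\theta\delta n$, not lower order. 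The ``anchoring'' observation from $T_3[\ell+1]\cap T_3[\ell]=\emptyset$ only says each $b\in T_3[\ell+1]$ touches at least one variable of $T[\ell]\setminus T[\ell-1]$, but the $T[\ell]$ are not nested and this set has no useful size bound, so it does not lead anywhere either.

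The missing idea is simpler and uses the disjointness $T_3[\ell+1]\cap T_3[\ell]=\emptyset$ for a different purpose: apply \textbf{Q3} with $Q=T[\ell]$ to the \emph{union} $T_3[\ell]\cup T_3[\ell+1]$ rather than to $T_3[\ell+1]$ alone. Every clause of $T_3[\ell]$ has $N(b)\subset T[\ell]$ entirely, and every clause of $T_3[\ell+1]$ has a $0.99$-fraction of its variables in $T[\ell]$; so by \textbf{Q3},
\begin{equation*}
|N(T_3[\ell+1])|+|N(T_3[\ell])|\;\leq\;\sum_{b\in T_3[\ell]\cup T_3[\ell+1]}|N(b)|\;\leq\;1.03\,|T[\ell]|\;\leq\;1.03\bigl(|T_1[\ell]|+|T_2[\ell]|+|N(T_3[\ell])|\bigr),
\end{equation*}
where the first inequality needs the disjointness of $T_3[\ell]$ and $T_3[\ell+1]$ so the two sums add. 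Rearranging, $|N(T_3[\ell+1])|\leq 1.03\,|T_1[\ell]\cup T_2[\ell]|+0.03\,|N(T_3[\ell])|\leq 1.03\cdot\theta\delta n/3+0.03\cdot\theta\delta n/2<\theta\delta n/2$. The whole point is that $|N(T_3[\ell])|$ appears on both sides and almost cancels, which is precisely what makes the constant fall below $1/2$ without needing any sharper control of $T_1[\ell]\cup T_2[\ell]$.
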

\begin{proof}
By construction we have $T_3\brk\ell\cap T_3\brk{\ell+1}=\emptyset$ (cf.~(\ref{eqDefT3})).
Furthermore, also by construction $N(T_3\brk\ell)\subset T\brk\ell$, and
each clause in $T_3\brk{\ell+1}$ has at least a $0.99$-fraction of its variables in $T\brk\ell$.
Thus, $|N(b)\cap T\brk\ell|\geq0.99|N(b)|$ for all $b\in T_3\brk{\ell}\cup T_3\brk{\ell+1}$.
Hence, {\bf Q3} yields
	\begin{eqnarray*}
	\abs{N(T_3\brk{\ell+1})}+\abs{N(T_3\brk{\ell})}&\leq&
		\sum_{b\in T_3\brk{\ell}\cup T_3\brk{\ell+1}}|N(b)|\\
	&\leq&\frac{1.01}{0.99}|T\brk\ell|
	\leq1.03\bc{\abs{T_1\brk\ell}+\abs{T_2\brk\ell}+\abs{N(T_3\brk\ell)}}.
	\end{eqnarray*}
Hence, $\abs{N(T_3\brk{\ell+1})}\leq1.03\bc{\abs{T_1\brk\ell}+\abs{T_2\brk\ell}}+0.03\abs{N(T_3\brk\ell)}\leq\theta\delta n/2.$
\qed\end{proof}

\begin{lemma}\label{Lemma_T2bound}
Assume that $\abs{T_1\brk{\ell}\cup T_2\brk{\ell}}\leq\delta\theta n/3$ and
$\abs{N(T_3\brk{\ell})}\leq\delta\theta n/2$.
Moreover, suppose that $\abs{T\brk{\ell-1}}\leq\delta\theta n$.
Then $\abs{T_2\brk{\ell+1}}\leq\delta\theta n/6$.
\end{lemma}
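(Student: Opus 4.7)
The plan is to show $|T_2\brk{\ell+1}|\leq\theta\delta n/6$ by decomposing $T_2\brk{\ell+1}$ along the five defining conditions \textbf{T2a}--\textbf{T2e} and bounding each contribution via the quasirandomness properties \textbf{Q0}--\textbf{Q4} guaranteed by \Prop~\ref{Prop_reasonable}, then summing the pieces at the end.

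Conditions \textbf{T2a} and \textbf{T2b} follow immediately from \textbf{Q0} and \textbf{Q1}: redundant clauses and variables of degree exceeding $\ln n$ account for $o(n)$ exceptions, while the two parts of \textbf{Q1} bound the variables touching clauses of length outside $[0.1\theta k,10\theta k]$ and those with $\delta(\theta k)^{3}\sum_{b\in N(x)}2^{-|N(b)|}>1$ each by at most $10^{-4}\theta\delta n$. For \textbf{T2c}, the three hypotheses of the lemma combine to give $|T\brk{\ell}|\leq|T_1\brk{\ell}\cup T_2\brk{\ell}|+|N(T_3\brk{\ell})|\leq(1/3+1/2)\theta\delta n\leq\delta(n-t)$, so \textbf{Q2} applies with $Q=T\brk{\ell}$, and its first two bullets yield at most $2\cdot 10^{-4}\theta\delta n$ offending variables. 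For \textbf{T2d}, every $b\in T_3\brk{\ell}$ has $|N(b)|\geq 100k_1$ and $|N(b)\setminus T\brk{\ell-1}|\leq k_1$, hence $|N(b)\cap T\brk{\ell-1}|\geq 0.99|N(b)|$; applying \textbf{Q3} with $Q=T\brk{\ell-1}$ and $z=0.99$ yields $\sum_{b\in T_3\brk{\ell}}|N(b)|\leq 1.03|T\brk{\ell-1}|\leq 1.03\theta\delta n$, so a variable in more than $100$ such clauses contributes more than $100$ to this sum, and \textbf{T2d} adds at most $0.011\theta\delta n$ variables.

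The main obstacle is \textbf{T2e}. Let $C_e=\{b:|N(b)\cap H\brk{\ell}|<3|N(b)|/4\}$, so any \textbf{T2e} variable lies in some $b\in C_e$, equivalently one with $|N(b)\cap(V_t\setminus H\brk{\ell})|\geq|N(b)|/4$. Applying \textbf{Q3} with $z=1/4$ and $Q$ equal to $V_t\setminus H\brk{\ell}$ (padded up to the allowed lower size $0.01\theta\delta n$ if necessary) gives $|T_{2e}|\leq\sum_{b\in C_e}|N(b)|\leq 4.04|V_t\setminus H\brk{\ell}|+o(n)$, reducing the problem to bounding $|V_t\setminus H\brk{\ell}|$ by a small constant multiple of $\theta\delta n$. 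Violations of \textbf{H1}, \textbf{H2}, and \textbf{H4} are dispatched directly by \textbf{Q1} and by \textbf{Q2} with $Q=T\brk{\ell-1}$, each contributing $O(10^{-4}\theta\delta n)$ exceptions. The technical heart is \textbf{H3}: variables belonging to at least two clauses $b$ with $|N(b)\setminus T\brk{\ell-1}|\leq k_1$. Clauses shorter than $100k_1<0.1\theta k$ contribute only $10^{-5}\theta\delta n$ variables by \textbf{Q1}, so one focuses on the long family $D$, which by \textbf{Q3} satisfies $\sum_{b\in D}|N(b)|\leq 1.03\theta\delta n$. Because naive pigeonhole on these incidences only produces the weak bound $|\{x:d_x\geq 2\}|\leq 0.52\theta\delta n$, the refinement must exploit the pair-counting identity $|\{x:d_x\geq 2\}|\leq\sum_x\bink{d_x}{2}=\sum_{\{b_1,b_2\}\subset D}|N(b_1)\cap N(b_2)|$, bounded in turn by $\bink{|D|}{2}+o(n)$ using \textbf{Q0}'s control on redundant pairs. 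Combined with the size estimate $|D|\leq 1.03\theta\delta n/(100k_1)$ and the exponentially small value $\delta=\exp(-c\theta k)$ for $\theta k\geq\rho_0$ sufficiently large, this second-moment step pushes the \textbf{H3} failure count below the required threshold, closing the \textbf{T2e} bound.

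Summing the \textbf{T2a}--\textbf{T2e} contributions then yields $|T_2\brk{\ell+1}|\leq\theta\delta n/6$ as required. The hard part of the argument is the pair-counting refinement used to control \textbf{H3} failures (and hence \textbf{T2e}), which crucially uses \textbf{Q0} on top of \textbf{Q3}; all other conditions fall out directly from \textbf{Q0}--\textbf{Q3}.
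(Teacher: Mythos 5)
Your overall organization matches the paper: you decompose $T_2\brk{\ell+1}$ along the five conditions \textbf{T2a}--\textbf{T2e} and attack each with \textbf{Q0}--\textbf{Q3}. The handling of \textbf{T2a}--\textbf{T2d} is sound and close to the paper's (the paper actually does \textbf{T2d} via $Q=T\brk\ell$ and $z=1$ since $N(b)\subset N(T_3\brk\ell)\subset T\brk\ell$, but your version with $Q=T\brk{\ell-1}$, $z=0.99$ works too).

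The real problem is your treatment of \textbf{T2e}, specifically the \textbf{H3}-violation count. Your pair-counting bound
$$\abs{\cbc{x:d_x\geq2}}\leq\sum_x\bink{d_x}2=\sum_{\cbc{b_1,b_2}\subset D}\abs{N(b_1)\cap N(b_2)}\leq\bink{\abs D}2+o(n)$$
cannot yield anything useful, because $\abs D$ is of order $\theta\delta n/k_1$, which is \emph{linear in $n$}: for $1\leq t\leq T$ one has $\theta\geq r/2^k=\rho/k$ bounded below and $\delta=\exp(-c\theta k)$ a fixed positive constant (small, but independent of $n$), so $\theta\delta n=\Theta(n)$ and hence $\bink{\abs D}2=\Omega(n^2)$. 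You need the \textbf{H3}-violation count to be $O(\theta\delta n)=O(n)$, so a quadratic-in-$n$ upper bound is vacuous, and no choice of $\rho_0$ helps because the mismatch is in the $n$-scaling, not in the constants. (Your appeal to ``exponentially small $\delta$'' cannot rescue this: $\delta$ is exponentially small in $\theta k$, but it is still a constant in $n$.) In addition, the step $\sum_{\cbc{b_1,b_2}\subset D}\abs{N(b_1)\cap N(b_2)}\leq\bink{\abs D}2+o(n)$ is not justified by \textbf{Q0} alone: a single redundant clause can participate in up to $\abs D=\Theta(n)$ pairs, each contributing up to $k$, so the ``$+o(n)$'' is not controlled.

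The paper avoids the pair count entirely. With $\cB=\cbc{b:\abs{N(b)}\geq100k_1\wedge\abs{N(b)\setminus T\brk\ell}\leq k_1}$ and $\sum_{b\in\cB}\abs{N(b)}\leq1.03\abs{T\brk\ell}$ (as you derived), one sets $\cU=\cbc{a:N(a)\subset N(\cB)}$, so $\cB\subset\cU$, and lets $U$ be the set of variables occurring in at least two clauses of $\cU$. Since every $x\in N(\cB)$ lies in at least one clause of $\cU$, double counting gives $\abs U+\abs{N(\cB)}\leq\sum_{a\in\cU}\abs{N(a)}$, while \textbf{Q3} with $Q=N(\cB)$ and $z=1$ gives $\sum_{a\in\cU}\abs{N(a)}\leq1.01\abs{N(\cB)}$, hence $\abs U\leq0.01\abs{N(\cB)}\leq0.02\theta\delta n$. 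Because $\cB\subset\cU$, every \textbf{H3}-violator lies in $U$, yielding a linear-in-$n$ bound. If you want to keep your ``bound $\abs{V_t\setminus H\brk\ell}$, then one application of \textbf{Q3}'' outline, you must replace the second-moment step by this double-\textbf{Q3} argument (or something equivalent); the second moment is the wrong tool here.
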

\begin{proof}
Conditions {\bf Q0} and {\bf Q1} readily imply that the number
of variables  that satisfy either {\bf T2a} or {\bf T2b} is $\leq0.001\theta\delta n$.
Moreover, we apply {\bf Q2} to the set $T\brk{\ell}$ of size
	\begin{equation}\label{eqT2bound1}
	\abs{T\brk{\ell}}\leq\abs{T_1\brk{\ell}\cup T_2\brk{\ell}}+\abs{N(T_3\brk{\ell})}\leq0.9\delta\theta n
	\end{equation}
to conclude that the number of variables satisfying {\bf T2c} is $\leq0.001\theta\delta n$ as well.

To bound the number of variables that satisfy {\bf T2d}, consider the subgraph of the factor graph induced on $T_3\brk{\ell}\cup N(T_3\brk{\ell})$.
For each $x\in N(T_3\brk{\ell})$ let $D_x$ be the number of neighbors of $x$ in $T_3\brk{\ell}$.
Let $\nu$ be the set of all $x\in V_t$ so that $D_x\geq100$.
Then {\bf Q3} yields
	\begin{eqnarray*}
	100\nu&\leq&\sum_{x\in N(T_3\brk\ell)}D_x=\sum_{a\in T_3\brk\ell}\abs{N(a)}\leq1.01|T\brk{\ell}|\leq\theta\delta n
	\quad\mbox{[as $N(b)\subset T\brk\ell$ for all $b\in T_3\brk\ell$]}.
	\end{eqnarray*}
Hence, there are at most $\nu\leq0.01\theta\delta n$ variables that satisfy {\bf T2d}.
In summary, we have shown that
	\begin{equation}\label{eqT2bound5}
	\abs{\cbc{x\in V:x\mbox{ satisfies one of {\bf T2a}--{\bf T2d}}}}\leq0.015\theta\delta n.
	\end{equation}

To deal with {\bf T2e}, 
observe that if a clause $a$ has at least $|N(a)|/4$ variables that are \emph{not} harmless, then one of the following statements is true.
\begin{enumerate}
\item[i.] $a$ contains at least $|N(a)|/20$ variables $x$ that violate either {\bf H1}, {\bf H2}, or {\bf H4}.
\item[ii.] $a$ contains at least $|N(a)|/5$ variables $x$ that violate condition {\bf H3}.
\end{enumerate}
Let $\cC_1$ be the set of clauses $a$ for which i.\ holds, and let $\cC_2$ be the set of clauses satisfying ii.,
so that the number of variables satisfying {\bf T2e} is bounded by $\sum_{a\in\cC_1\cup\cC_2}|N(a)|$.

To bound $\sum_{a\in\cC_1}|N(a)|$, let $Q$ be the set of all variables $x$ that violate either {\bf H1}, {\bf H2}, or {\bf H4} at time $\ell$.
Then conditions {\bf Q1} and {\bf Q2} entail that $\abs Q\leq3\cdot10^{-4}\theta\delta n$
(because we are assuming $\abs{T(\ell-1)}\leq\theta\delta n$).
Therefore, condition {\bf Q3} implies that
	\begin{equation}\label{eqT2bound4}
	\sum_{a\in\cC_1}|N(a)|\leq21\abs{Q}+10^{-4}\delta\theta n\leq0.0064\delta \theta n.
	\end{equation}

To deal with $\cC_2$ let $\cB'$ be the set of all clauses $b$ 
such that $|N(b)|\geq100k_1$ but $|N(b)\setminus T\brk\ell|\leq k_1$.
Since we know from~(\ref{eqT2bound1}) that $\abs{T\brk\ell}\leq\delta\theta n$,
condition {\bf Q3} applied to $T\brk{\ell}$ implies
	\begin{equation}\label{eqT2bound3'}
	\abs{N(\cB')}\leq 
		\sum_{b\in \cB'}|N(b)|\leq1.03\abs{T\brk{\ell}}+10^{-4}\delta\theta n\leq1.0301\delta \theta n.
		\end{equation}
In addition, let $\cB''$ be the set of all clauses of length less than $100k_1$.
Since $100k_1=100\sqrt c\theta k\leq0.1\theta k$ by our choice of $c$,
{\bf Q1} implies that
$|N(\cB'')|\leq10^{-4}\delta\theta n$.
Hence, (\ref{eqT2bound3'}) shows that $\cB=\cB'\cup\cB''$ satisfies
	\begin{equation}\label{eqT2bound3}
	\abs{N(\cB)}\leq 1.0302\delta \theta n.
	\end{equation}
Furthermore, let $\cU$ be the set of all clauses $a$ such that $N(a)\subset N(\cB)$.
Let $U$ be the set of variables $x\in N(\cB)$ that occur in at least two clauses from $\cU$.
Then by  {\bf Q3}
	\begin{eqnarray*}
	\abs U+\abs{N(\cB)}&\leq&\sum_{a\in\cU}|N(a)|\leq1.01|N(\cB)|+10^{-4}\delta\theta n,
	\end{eqnarray*}
whence $|U|\leq0.01|N(\cB)|+10^{-4}\delta\theta n\leq0.02\delta \theta n$ due to~(\ref{eqT2bound3}).
Since $\cB\subset\cU$, the set $U$ contains all variables that occur in at least two clauses from $\cB$,
i.e., all variables that violate condition {\bf H3}.
Therefore, any $a\in\cC_2$ contains at least $|N(a)|/5$ variables from $U$.
Applying {\bf Q3} once more, we obtain
	\begin{eqnarray*}
	\sum_{a\in\cC_2}|N(a)|&\leq&5.05\cdot0.02\delta \theta n+10^{-4}\delta\theta n=0.1201\delta \theta n.
	\end{eqnarray*}
Combining this estimate with the bound~(\ref{eqT2bound4}) on $\cC_1$, 
we conclude that the number of variables satisfying {\bf T2e} is bounded
by $\sum_{a\in\cC_1\cup\cC_2}|N(a)|\leq0.127\delta \theta n.$
Together with~(\ref{eqT2bound5}) this yields the assertion.
\qed\end{proof}

In \Sec~\ref{Sec_T1bound} we will derive the following bound on $\abs{T_1\brk{\ell+1}}$.

\begin{proposition}\label{Prop_T1bound}
If $\abs{T\brk\ell}\leq\delta \theta n$, then
$\abs{T_1\brk{\ell+1}\setminus T_2\brk{\ell+1}}\leq\delta \theta n/6$.
\end{proposition}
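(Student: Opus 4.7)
The plan is to imitate the argument sketched for \Lem~\ref{Lemma_Pleq1} in \Sec~\ref{Sec_dynamics}, but with $T\brk\ell$ in place of $B\brk\ell$. By \Prop~\ref{Prop_TB} we have $B\brk\ell\subset T\brk\ell$, so the message bound $\abs{\Delta_{y\ra b}\brk\ell}<0.1\delta$ holds for every $y\notin T\brk\ell$, and since $\abs{T\brk\ell}\leq\theta\delta n$ by hypothesis, the quasirandomness conditions {\bf Q0}--{\bf Q4} can be applied with $Q=T\brk\ell$. Fix any $x\notin T_2\brk{\ell+1}$: then {\bf T2a} and {\bf T2b} fail, so every $b\in N(x)$ has $0.1\theta k\leq\abs{N(b)}\leq10\theta k$ and satisfies the weight bound of {\bf H1}. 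A Taylor expansion of $\ln[\mu_{b\ra x}(1)\brk\ell/\mu_{b\ra x}(0)\brk\ell]$ based on~(\ref{eqBPai})---using that at most one $y\in N(b)\setminus\cbc x$ lies in $T\brk\ell$, while all others have $\abs{\Delta_{y\ra b}\brk\ell}<0.1\delta$---then yields, exactly as in \Lem~\ref{Prop_P0X}, $\abs{\ln P_{\leq1}(x\ra a)\brk{\ell+1}+L_{x\ra a}}\leq10^{-6}\delta$, where $L_{x\ra a}$ is the linear form~(\ref{eqLxa}) with $B\brk\ell$ replaced by $T\brk\ell$.

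Next I would split $L_{x\ra a}=S_{x\ra a}+\xi_{x\ra a}$ into the sign-only part $S_{x\ra a}=\sum_{b\in N_{\leq1}(x,T\brk\ell)\setminus\cbc a}2^{1-\abs{N(b)}}\sign(x,b)$ and the cross term $\xi_{x\ra a}$ as in~(\ref{eqxixaX}). By the third inequality of {\bf Q2} with $Q=T\brk\ell$, all but at most $10^{-4}\theta\delta n$ variables $x$ satisfy $\abs{S_{x\ra a}}\leq\delta/500$ for every $a\in N(x)$. For $\xi_{x\ra a}$ I would project the edge-indexed vector $\Delta\brk\ell$ to a vector $D\in\RR^{V_{t+1}}$ by choosing, for each $y$ with $\cN(y)\ne\emptyset$, some $b_y\in\cN(y)$ and setting $D_y=\Delta_{y\ra b_y}\brk\ell$, so $\norm{D}_\infty\leq\frac12$. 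The key intermediate claim is the projection estimate $\abs{\Delta_{y\ra b}\brk\ell-D_y}\leq\delta^3$ for every $b\in\cN(y)$, which follows from~(\ref{eqBPia}) and \Lem~\ref{Lemma_P1T} because the two messages $\Delta_{y\ra b}\brk\ell$ and $\Delta_{y\ra b_y}\brk\ell$ differ only through one factor in a product of values close to $1$. Granting this, {\bf Q4} gives $\norm{\Lambda_{T\brk\ell}D}_1\leq\delta^4(n-t)/2$, so $\abs{\cbc{x\in V_{t+1}:\abs{(\Lambda_{T\brk\ell}D)_x}>\delta^2}}\leq\delta^2(n-t)/2$.

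Finally I would compare $\xi_{x\ra a}$ with $\zeta_x=(\Lambda_{T\brk\ell}D)_x$. The two differ by (i) the single term for $b=a$, which by {\bf H1} contributes at most $\abs{N(a)}2^{-\abs{N(a)}}\leq\delta/10^4$, and (ii) the accumulated projection error, bounded by $\delta^3\sum_{b\in N_{\leq1}(x,T\brk\ell)\setminus\cbc a}\abs{N(b)}2^{1-\abs{N(b)}}\leq\delta/10^4$ again by {\bf H1}. Combining everything, any $x\notin T_2\brk{\ell+1}$ avoiding the two exceptional sets above satisfies $\abs{L_{x\ra a}}\leq0.003\delta$ for all $a\in N(x)$, hence $\abs{P_{\leq1}(x\ra a)\brk{\ell+1}-1}\leq0.01\delta$ and $x\notin T_1\brk{\ell+1}$. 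The total exception count is at most $(10^{-4}+\delta^2/2)\theta\delta n<\theta\delta n/6$, as required.

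The main obstacle is the projection bound $\abs{\Delta_{y\ra b}\brk\ell-D_y}\leq\delta^3$ used in the third step. Each variable $y$ has $\Theta(\theta k)$ outgoing messages, so a naive argument estimating the effect of swapping a single factor in~(\ref{eqBPia}) is too weak: even factors within $1\pm O(2^{-0.1\theta k})$ of $1$ accumulate multiplicatively. One must exploit that for $y\notin T\brk\ell$ all but $O(1)$ of the incident factors are actually within $2^{-\Omega(\theta k)}\leq\delta^3/(\theta k)$ of $1$, and that the few large factors are controlled. This ultimately descends from {\bf H3} and {\bf T2d} in the previous round ruling out many almost-saturated clauses through $y$, together with the weight bound of {\bf H1}.
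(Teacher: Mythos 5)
Your overall strategy is the same as the paper's (Section~\ref{Sec_T1bound}): linearize $\ln P_{\leq1}(x\ra a)\brk{\ell+1}$ to $-L_{x\ra a}=-(\sigma_{x\ra a}+\xi_{x\ra a})$, bound the sign part via \textbf{Q2}, bound the cross term by projecting the edge-indexed vector $\Delta$ to a vertex-indexed vector $D$ and invoking \textbf{Q4}, and count exceptions. This is precisely the paper's decomposition into Proposition~\ref{Prop_P0}, Lemma~\ref{Lemma_sigma}, Lemma~\ref{Lemma_B2} and Lemma~\ref{Lemma_xi}.

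The gap is in the projection estimate $\abs{\Delta_{y\ra b}\brk\ell-D_y}\leq\delta^3$, which you first assert follows by a ``swap one factor'' argument and then, in your closing paragraph, retract with the worry that ``factors within $1\pm O(2^{-0.1\theta k})$ of $1$ accumulate multiplicatively,'' proposing to fix things through \textbf{H3} and \textbf{T2d}. The retraction is the wrong call and the fix is not needed: writing $q_c=\prod_{c'\in N(y)\setminus\cbc c}\mu_{c'\ra y}(1)\brk\ell/\mu_{c'\ra y}(0)\brk\ell$, all factors common to $q_b$ and $q_{b_y}$ cancel in the ratio, leaving $q_b/q_{b_y}=\bigl(\mu_{b_y\ra y}(1)\brk\ell\,\mu_{b\ra y}(0)\brk\ell\bigr)/\bigl(\mu_{b_y\ra y}(0)\brk\ell\,\mu_{b\ra y}(1)\brk\ell\bigr)$, so only the two swapped clause-to-variable ratios survive and no accumulation occurs. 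Combined with the elementary bound $\abs{\tfrac{q_b}{1+q_b}-\tfrac{q_{b_y}}{1+q_{b_y}}}\leq\abs{1-q_{b_y}/q_b}$, the estimate reduces to showing that each single ratio $\mu_{c\ra y}(1)\brk\ell/\mu_{c\ra y}(0)\brk\ell$ is within $\exp(-k_1/2)$ of $1$ for $c\in\cbc{b,b_y}$; this is Corollary~\ref{Cor_clause2var}, applicable because both clauses lie in $\cN(y)$ and hence have at most two variables in $T\brk\ell$ and length at least $0.1\theta k$. That corollary does, in one of its two cases, fall back on the harmlessness conditions, but the ``accumulation'' you worry about never enters. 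Since you explicitly flag this step as the crux and leave it unresolved, your argument as submitted has a genuine hole at exactly the point the paper settles with Lemma~\ref{Lemma_B2}.
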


\begin{proof}[\Prop~\ref{Prop_Tbound}]
We are going to show that
	\begin{equation}\label{eqTbound1}
	\abs{T_1\brk{\ell}\cup T_2\brk{\ell}}\leq\delta \theta n/3\mbox{ and }\abs{N(T_3\brk{\ell})}\leq\delta \theta n/2
	\end{equation}
for all $\ell\geq0$.
This implies that
	$\abs{T\brk{\ell}}
		\leq\delta \theta n$
for all $\ell\geq0$, as desired.

In order to prove~(\ref{eqTbound1}) we proceed by induction on $\ell$.
The bounds for $\ell=0$ are immediate from {\bf Q0} and {\bf Q1}.
Now assume that~(\ref{eqTbound1}) holds for all $l\leq\ell$.
Then \Lem~\ref{Lemma_T3bound} shows that $\abs{N(T_3\brk{\ell})}\leq\delta\theta n/2$.
Moreover, \Lem~\ref{Lemma_T2bound} applies (with the convention that $T\brk{-1}=T\brk{0}$), giving $\abs{T_2\brk{\ell+1}}\leq\delta\theta n/6$.
Finally, \Prop~\ref{Prop_T1bound} shows  $\abs{T_1\brk{\ell+1}\setminus T_2\brk{\ell+1}}\leq\delta\theta n/6$,
whence $\abs{T_1\brk{\ell+1}\cup T_2\brk{\ell+1}}\leq\delta\theta n/3$.
\qed\end{proof}

\subsection{Proof of \Prop~\ref{Prop_T1bound}}\label{Sec_T1bound}

\emph{Throughout this section we assume that $\abs{T\brk\ell}\leq\delta \theta n$.}

For a variable $x\in V_t$ and $a\in N(x)$ we let
	\begin{eqnarray*}
	\sigma_{x\ra a}^{\brk{\ell+1}}&=&\sum_{b\in N_{\leq1}^{\brk{\ell+1}}\bc{x\ra a}}2^{1-|N(b)|}\sign\bc{x,b},\\
	\xi_{x\ra a}^{\brk{\ell+1}}&=&
			\sum_{b\in N_{\leq1}^{\brk{\ell+1}}\bc{x\ra a}}\sum_{y\in N(b)\setminus\cbc x}2^{1-|N(b)|}\sign\bc{x,b}\sign\bc{y,b}
					\Delta_{y\ra b}^{\brk{\ell}},\mbox{ and }\\
	L_{x\ra a}^{\brk{\ell+1}}&=&\sigma_{x\ra a}^{\brk{\ell+1}}+\xi_{x\ra a}^{\brk{\ell+1}}.
	\end{eqnarray*}
In \Sec~\ref{Sec_P0} we are going to establish the following.

\begin{proposition}\label{Prop_P0}
For any variable $x\not\in T_2\brk{\ell+1}$ and any clause $a\in N(x)$ we have
	$$\abs{L_{x\ra a}^{\brk{\ell+1}}+P_{\leq1}^{\brk{\ell+1}}
			}\leq10^{-3}\delta.$$
\end{proposition}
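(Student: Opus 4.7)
The plan is to Taylor-expand $\ln P_{\leq1}(x\ra a)\brk{\ell+1}$ around the ``flat'' message vector $\mu\brk0=\tfrac12\vecone$, isolate the linear-in-$\Delta$ contribution (which matches $\pm L_{x\ra a}\brk{\ell+1}$), and show that the higher-order remainder is bounded by $\delta/10^3$. The hypotheses we can exploit are exactly those guaranteed by $x\not\in T_2\brk{\ell+1}$: the failures of T2a--T2c give $0.1\theta k\leq|N(b)|\leq10\theta k$ for every $b\in N(x)$, together with $\delta(\theta k)^3\sum_{b\in N(x)}2^{-|N(b)|}\leq1$, $\sum_{b\in N_1(x,T\brk\ell)}2^{-|N(b)|}\leq(\theta k)^5\delta$, and $\sum_{b\in N_{>1}(x,T\brk\ell)}2^{|N(b)\cap T\brk\ell\setminus x|-|N(b)|}\leq\delta/(\theta k)$. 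In addition, Prop~\ref{Prop_TB} supplies $|\Delta_{y\ra b}\brk\ell|\leq0.1\delta$ whenever $y\notin T\brk\ell$.

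First I would unroll the defining identity~(\ref{eqBPai}) to write
$$\ln\frac{\mu_{b\ra x}(1)\brk\ell}{\mu_{b\ra x}(0)\brk\ell}=-\sign(x,b)\ln(1-\pi_b),\qquad \pi_b=\prod_{y\in N(b)\setminus\cbc x}\!\bc{\tfrac12-\sign(y,b)\Delta_{y\ra b}\brk\ell}.$$
Since $|N(b)|\geq0.1\theta k$, the trivial bound $\pi_b\leq 2^{1-|N(b)|}\prod_y(1+2|\Delta_{y\ra b}|)$ combined with Prop~\ref{Prop_TB} gives $\pi_b=O(2^{-|N(b)|})$, so the remainder in $-\ln(1-\pi_b)=\pi_b+O(\pi_b^2)$ contributes at most $\sum_b O(2^{-2|N(b)|})\leq 2^{-0.1\theta k}\sum_b 2^{-|N(b)|}\leq 2^{-0.1\theta k}\delta^{-1}(\theta k)^{-3}\ll\delta/10^5$, provided that the constant $c$ in $\delta=\exp(-c\theta k)$ is small enough. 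This reduces the problem to comparing $\sum_b\sign(x,b)\pi_b$ with the claimed linear expression.

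Next I would expand $\pi_b=2^{1-|N(b)|}\prod_y(1-2\sign(y,b)\Delta_{y\ra b}\brk\ell)$ using $\prod(1+u_y)=1+\sum_y u_y+\sum_{|S|\geq2}\prod_{y\in S}u_y$, splitting $N_{\leq1}(x\ra a)\brk{\ell+1}$ according to whether $|N(b)\cap T\brk\ell\setminus\cbc x|$ equals $0$ or $1$. In the first sub-class every $|u_y|\leq0.2\delta$, so the quadratic-and-higher remainder per clause is $O((|N(b)|\delta)^2)=O((\theta k\delta)^2)$; summed against $2^{-|N(b)|}$ this totals $O((\theta k\delta)^2\cdot\delta^{-1}(\theta k)^{-3})=O(\delta/(\theta k))\ll\delta/10^3$. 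In the second sub-class exactly one factor $(1-2\sign(y^*,b)\Delta_{y^*\ra b}\brk\ell)$ may be of size up to $2$, so I would factor it out, linearize the remaining product (whose $|u_y|\leq0.2\delta$), and bound the cross-term $u_{y^*}\sum_{y\neq y^*}u_y=O(\theta k\delta)$ per clause; using the H2-style bound $\sum_{b\in N_1(x,T\brk\ell)}2^{-|N(b)|}\leq(\theta k)^5\delta$ from ``not T2c'' gives a total error $O((\theta k)^6\delta^2)$, again negligible. The collected linear contributions assemble exactly into the expression $\sigma_{x\ra a}\brk{\ell+1}+\xi_{x\ra a}\brk{\ell+1}=L_{x\ra a}\brk{\ell+1}$ (up to the sign specified in the proposition), and all error pieces combine to at most $\delta/10^3$.

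The main obstacle I anticipate is precisely the second sub-class: the clauses with one variable $y^*\in T\brk\ell\setminus\cbc x$ violate the ``uniform smallness of $u_y$'' underpinning the naive Taylor estimate, because $|\Delta_{y^*\ra b}|$ may be as large as $\tfrac12$. The resolution is a two-pronged use of quasirandomness---(i)~H1/not-T2b bounds the total $2^{-|N(b)|}$-weight of $N(x)$, and (ii)~not-T2c bounds the weight of clauses that meet $T\brk\ell$ at all---ensuring that these ``exceptional'' clauses are both sparse and geometrically damped, so that their nonlinear contribution is well below $\delta/10^3$. The rest of the argument is bookkeeping and closely parallels the sketch of Lemma~\ref{Prop_P0X} from \Sec~\ref{Sec_dynamics}, adapted to the (slightly weaker) hypothesis $x\not\in T_2\brk{\ell+1}$ in place of the $\ell$-goodness used there.
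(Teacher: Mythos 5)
Your proposal is correct and takes essentially the same route as the paper: you Taylor-expand the per-clause log-likelihood ratio $\ln(\mu_{b\ra x}(1)/\mu_{b\ra x}(0))$ around the flat vector (the paper isolates this as Lemma~\ref{Lemma_lin1}), split clauses in $N_{\leq1}(x\ra a)$ according to $|N(b)\cap T\brk\ell\setminus\cbc x|\in\cbc{0,1}$, and sum the remainders against $2^{-|N(b)|}$ using the not-{\bf T2b} and not-{\bf T2c} bounds. The only cosmetic difference is that you inline the per-clause estimate rather than stating it as a separate lemma.
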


\begin{lemma}\label{Lemma_sigma}
For all but at most $10^{-4}\delta\theta n$ variables $x\in V\setminus T_2\brk{\ell+1}$
we have
	$$\max_{a\in N(x)}\abs{\sigma_{x\ra a}^{\brk{\ell+1}}}\leq0.003\delta.$$
\end{lemma}
\begin{proof}
Applying {\bf Q2} to $Q=T\brk\ell$, we find that for all but $10^{-4}\delta\theta n$ variables $x\in V_t$ we have
	\begin{eqnarray}\label{eqLemmasigma1}
	\abs{\sum_{b\in N_{\leq1}(x,T\brk\ell)}2^{1-|N(b)|}\sign(x,b)}&\leq&2\cdot 10^{-3}\delta.
	\end{eqnarray}
Assume that $x$ satisfies~(\ref{eqLemmasigma1}) and that $x\not\in T_2\brk{\ell+1}$.
Let $a\in N(x)$.
Since $N_{\leq1}^{\brk{\ell+1}}(x\ra a)=N_{\leq1}(x,T\brk\ell)\setminus\cbc a$, we obtain
	\begin{eqnarray*}
	\abs{\sigma_{x\ra a}^{\brk{\ell+1}}}&\leq&\abs{\sum_{b\in N_{\leq1}(x,T\brk\ell)}2^{1-|N(b)|}\sign(x,b)}+2^{1-|N(a)|}\leq2\cdot10^{-3}\delta+2^{1-|N(a)|}\\
		&\leq&2\cdot10^{-3}\delta+\exp(-0.1\theta k)\leq0.003\delta\qquad\mbox{[as $|N(a)|\geq0.1\theta k$ due to {\bf T2a}]},
	\end{eqnarray*}
as desired.
\qed\end{proof}

\begin{lemma}\label{Lemma_B2}
Let $x$ be a variable and let $b_1,b_2\in N(x)$
be such that $|N(b_i)\cap T\brk\ell|\leq2$ and $|N(b_i)|\geq0.1\theta k$ for $i=1,2$.
Then
	$\abs{\Delta_{x\ra b_1}^{\brk\ell}-\Delta_{x\ra b_2}^{\brk\ell}}\leq\delta^3.$
\end{lemma}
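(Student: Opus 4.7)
The plan is to reduce the statement to a comparison of the two products
$P_i = \prod_{b\in N(x)\setminus b_i}\mu_{b\ra x}(1)\brk{\ell-1}/\mu_{b\ra x}(0)\brk{\ell-1}$, $i=1,2$, and then apply \Cor~\ref{Cor_clause2var} to the single factors that distinguish $P_1$ from $P_2$. Indeed, the BP update~(\ref{eqBPia}) gives $\mu_{x\ra b_i}(1)\brk{\ell}=P_i/(1+P_i)$ (provided both the numerator and denominator are finite and nonzero), whence
\begin{equation*}
\Delta_{x\ra b_1}\brk{\ell}-\Delta_{x\ra b_2}\brk{\ell}=\frac{P_1-1}{2(1+P_1)}-\frac{P_2-1}{2(1+P_2)}=\frac{P_1-P_2}{(1+P_1)(1+P_2)}.
\end{equation*}

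Second, observe that $P_1$ and $P_2$ come from the \emph{same} full product $\prod_{b\in N(x)}\mu_{b\ra x}(1)/\mu_{b\ra x}(0)$, but with different single factors removed. Setting $R_i=\mu_{b_i\ra x}(1)\brk{\ell-1}/\mu_{b_i\ra x}(0)\brk{\ell-1}$, we have $P_1R_1=P_2R_2$. Since $|N(b_i)|\geq0.1\theta k$ and $|N(b_i)\cap T\brk{\ell}|\leq 2$, for $\rho_0$ large enough we have $N(b_i)\not\subset T\brk{\ell}$; therefore \Cor~\ref{Cor_clause2var} applies to each $b_i$ and gives $\mu_{b_i\ra x}(0)\brk{\ell-1}>0$ and $|R_i-1|\leq\exp(-k_1/2)$. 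Consequently $P_1=P_2\cdot R_2/R_1$ with $|R_2/R_1-1|\leq 3\exp(-k_1/2)$, so
\begin{equation*}
\abs{\Delta_{x\ra b_1}\brk{\ell}-\Delta_{x\ra b_2}\brk{\ell}}\leq\frac{|P_1-P_2|}{(1+P_1)(1+P_2)}\leq\frac{3\exp(-k_1/2)\,P_2}{(1+P_1)(1+P_2)}\leq 3\exp(-k_1/2),
\end{equation*}
using the trivial bound $P_2/(1+P_2)\leq 1$ and $1+P_1\geq 1$.

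Third, plug in $k_1=\sqrt c\,\theta k$ and $\delta=\exp(-c\theta k)$: the required inequality $3\exp(-\sqrt c\,\theta k/2)\leq\delta^3=\exp(-3c\theta k)$ reduces to $\theta k(\sqrt c/2-3c)\geq\ln 3$, which holds as soon as $c$ is small enough (e.g.\ $c\leq1/36$) and $\theta k\geq\rho\geq\rho_0$ is sufficiently large; this is precisely the regime assumed in \Thm~\ref{Thm_dynamics}.

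The only genuine obstacle is bookkeeping of the degenerate cases where some factor $\mu_{b\ra x}(\zeta)\brk{\ell-1}$ vanishes for a $b\in N(x)\setminus\{b_1,b_2\}$: since one of $\mu_{b\ra x}(0),\mu_{b\ra x}(1)$ is always equal to $1$ (by~(\ref{eqBPai})), such a $b$ forces either $P_1=P_2=0$ or $P_1=P_2=\infty$ \emph{simultaneously} (\Cor~\ref{Cor_clause2var} rules out $b\in\{b_1,b_2\}$ as the source of the zero, so the offending factor is present in both $P_1$ and $P_2$). In either case $\Delta_{x\ra b_1}\brk{\ell}=\Delta_{x\ra b_2}\brk{\ell}\in\{-\tfrac12,\tfrac12\}$ and the bound is trivial; outside this degenerate situation $P_1,P_2\in(0,\infty)$ and the computation above applies verbatim.
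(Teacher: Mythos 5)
Your proof is correct and follows essentially the same route as the paper's. Both start from $\mu_{x\ra b_i}(1)\brk{\ell}=P_i/(1+P_i)$ (the paper writes $q_i$ for your $P_i$), express the difference of the $\Delta$'s as $(P_1-P_2)/((1+P_1)(1+P_2))$, and observe that $P_1$ and $P_2$ differ only by the single-factor ratio $R_2/R_1$ controlled by \Cor~\ref{Cor_clause2var}; the paper's chain $\abs{q_1-q_2}/((1+q_1)(1+q_2))\le\abs{1-q_2/q_1}$ is just a slightly different way to package the same cancellation. Your extra paragraph on the degenerate cases (some $\mu_{b\ra x}(\zeta)\brk{\ell-1}=0$) is welcome care that the paper glosses over with the terse remark that $N(b_i)\setminus T\ne\emptyset$ implies $\mu_{x\ra b_i}(0)\brk\ell>0$; note only that in the doubly-degenerate case (a $b$ with $\mu_{b\ra x}(0)=0$ and a $b'$ with $\mu_{b'\ra x}(1)=0$) the BP default sets $\Delta_{x\ra b_i}=0$, so the difference is still $0$, which your phrase ``$\in\{-\tfrac12,\tfrac12\}$'' technically omits but which does not affect the conclusion.
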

\begin{proof}
By \Prop~\ref{Prop_TB} we have $B\brk{\ell-1}\subset T\brk{\ell-1}$.
Furthermore, our assumptions ensure that $N(b_i)\setminus T\brk\ell\neq\emptyset$.
Hence, 
\Cor~\ref{Cor_clause2var}
yields
	\begin{equation}\label{eqB21}
	\mu_{b_i\ra x}^{\brk{\ell-1}}(-1)>0\mbox{ and }
	\abs{\frac{\mu_{b_i\ra x}^{\brk{\ell-1}}(1)}{\mu_{b_i\ra x}^{\brk{\ell-1}}(-1)}-1}\leq\exp\bc{-k_1/2}\leq\delta^6
	\end{equation}
for $i=1,2$.
There are three cases.
\begin{description}
\item[Case 1: there is $c\in N(b)\setminus\cbc{b_1,b_2}$ such that $\mu_{c\ra x}^{\brk{\ell-1}}(1)=0$.]
	Then (\ref{eqBPiaell}) shows that $$\mu_{x\ra b_1}^{\brk\ell}(1)=\mu_{x\ra b_2}^{\brk\ell}(1)=0.$$
	Thus,
		$\Delta_{x\ra b_1}^{\brk\ell}=\Delta_{x\ra b_2}^{\brk\ell}=-1/2$.
\item[Case 2: there is $c\in N(b)\setminus\cbc{b_1,b_2}$ such that $\mu_{c\ra x}^{\brk{\ell-1}}(-1)=0$.]
	Similarly as in Case~1, (\ref{eqBPiaell}) implies $\mu_{x\ra b_i}^{\brk\ell}(-1)=0$ for $i=1,2$.
	Since $\mu_{x\ra b_i}^{\brk\ell}(-1)+\mu_{x\ra b_i}^{\brk\ell}(1)=1$, we thus obtain 
		$\Delta_{x\ra b_1}^{\brk\ell}=\Delta_{x\ra b_2}^{\brk\ell}=1/2$.
\item[Case 3: for all $c\in N(b)\setminus\cbc{b_1,b_2}$ we have $0<\mu_{c\ra x}^{\brk{\ell-1}}(1)<1$.]
	Then (\ref{eqBPiaell}) yields $0<\mu_{x\ra b_i}^{\brk\ell}(-1)<1$ for $i=1,2$.
	Therefore, we can define
		\begin{equation}\label{eqDefqi}
		q_i=\frac{\mu_{x\ra b_i}^{\brk{\ell}}(1)}{\mu_{x\ra b_i}^{\brk{\ell}}(-1)}
			=\frac{\prod_{b\in N(x)\setminus\cbc{b_i}}\mu_{b\ra x}^{\brk{\ell-1}}(1)}{\prod_{b\in N(x)\setminus\cbc{b_i}}\mu_{b\ra x}^{\brk{\ell-1}}(-1)}>0.
		\end{equation}
	Unravelling~(\ref{eqBPiaell}), we see that
		\begin{eqnarray}\nonumber
		\mu_{x\ra b_i}^{\brk{\ell}}(1)&=&
			\frac{\prod_{b\in N(x)\setminus\cbc{b_i}}\mu_{b\ra x}^{\brk{\ell-1}}(1)}{
				\prod_{b\in N(x)\setminus\cbc{b_i}}\mu_{b\ra x}^{\brk{\ell-1}}(1)
			+\prod_{b\in N(x)\setminus\cbc{b_i}}\mu_{b\ra x}^{\brk{\ell-1}}(-1)}\\
			&=&\frac{\prod_{b\in N(x)\setminus\cbc{b_i}}\mu_{b\ra x}^{\brk{\ell-1}}(1)}
				{(1+1/q_i)\prod_{b\in N(x)\setminus\cbc{b_i}}\mu_{b\ra x}^{\brk{\ell-1}}(1)}
					=\frac{q_i}{q_i+1}.
					\label{eqqi}
		\end{eqnarray}
	Hence,
	\begin{eqnarray}\nonumber
	\abs{\Delta_{x\ra b_1}^{\brk{\ell}}-\Delta_{x\ra b_2}^{\brk{\ell}}}
		&=&\abs{\mu_{x\ra b_1}^{\brk{\ell}}(1)-\mu_{x\ra b_2}^{\brk{\ell}}(1)}\\
		&=&\abs{\frac{q_1-q_2}{(1+q_1)(1+q_2)}}\qquad\mbox{[by (\ref{eqqi})]}\nonumber\\
		&=&\abs{\frac{1-q_2/q_1}{(1+1/q_1)(1/q_1+q_2/q_1)}}\leq\frac{q_1}{q_2}
			\abs{1-\frac{q_2}{q_1}}\quad\mbox{[as $q_1,q_2>0$]}.
	\label{eqTB7}
	\end{eqnarray}
	Furthermore, by the definition~(\ref{eqDefqi}) of $q_1,q_2$, we have
		\begin{eqnarray*}
		\frac{q_2}{q_1}&=&\frac{\mu_{b_1\ra x}^{\brk{\ell-1}}(1)}{\mu_{b_1\ra x}^{\brk{\ell-1}}(-1)}\cdot
			\frac{\mu_{b_2\ra x}^{\brk{\ell-1}}(-1)}{\mu_{b_2\ra x}^{\brk{\ell-1}}(1)}.
		\end{eqnarray*}
	Hence, (\ref{eqB21}) yields $|1-\frac{q_2}{q_1}|\leq\delta^5$, and thus
	the desired bound on $\abs{\Delta_{x\ra b_1}^{\brk{\ell}}-\Delta_{x\ra b_2}^{\brk{\ell}}}$ follows from~(\ref{eqTB7}).
\end{description}
Hence, we have established the desired bound in all cases.
\qed\end{proof}

\begin{lemma}\label{Lemma_xi}
For all but at most 
	$0.1\delta\theta n$
variables $x\not\in T_2\brk{\ell+1}$ we have $$\max_{a\in N(x)}\abs{\xi_{x\ra a}^{\brk{\ell+1}}}\leq0.001\delta.$$
\end{lemma}
\begin{proof}
For a variable $y$ let $\cN(y)$ be the set of all clauses $b\in N(y)$ such that $b\in N_{\leq1}(x,T\brk\ell)$ for some variable $x\in V_t$.
If $\cN(y)=\emptyset$ we define $\Delta_y=0$; otherwise select $a_y\in\cN(y)$
arbitrarily and set $\Delta_y=\Delta_{y\ra a_y}^{\brk\ell}$.
Thus, we obtain a vector $\Delta=(\Delta_y)_{y\in V}$ with norm
	$\norm\Delta_\infty\leq\frac12$.
Let $\Xi=(\xi_x)_{x\in V_t}=\Lambda_{T\brk\ell}\Delta$,
where $\Lambda_{T\brk\ell}$ is the linear operator from condition {\bf Q4} in Definition~\ref{Def_reasonable}.
That is, for any $x\in V$ we have
	$$\xi_{x}=\sum_{b\in N_{\leq1}(x,T\brk\ell)}
		\sum_{y\in N(b)\setminus\cbc x}2^{-|N(b)|}\sign(x,b)\sign(y,b)\Delta_{y}.$$
Because  $\abs{T\brk\ell}\leq\delta \theta n$,
condition {\bf Q4} ensures that $\cutnorm{\Lambda_{T\brk\ell}}\leq \delta^4\theta n$.
Consequently,
	\begin{equation}\label{eqXi1}
	\norm\Xi_1=\norm{\Lambda_{T\brk\ell}\Delta}_1\leq\cutnorm{\Lambda_{T\brk\ell}}\norm\Delta_\infty
		\leq \delta^4\theta n.
	\end{equation}
Since $\norm\Xi_1=\sum_{x\in V}\abs{\xi_x}$, (\ref{eqXi1}) implies that
	\begin{equation}\label{eqXi2}
	\abs{\cbc{x\in V:\abs{\xi_x}>\delta^{2}}}\leq	\delta^{2}\theta n.
	\end{equation}

To infer the lemma from (\ref{eqXi2}), we need to establish a relation between $\xi_x$ and $\xi_{x\ra a}$ for $x\not\in T_2\brk{\ell+1}$
and $a\in N(x)$.
Since $\cN(y)\subset N_{\leq1}(x,T\brk\ell)$ for any $y\in V_t$, we see that $|N(b)\cap T\brk\ell|\leq2$ for all $b\in\cN(y)$.
Furthermore, as $x\not\in T_2\brk{\ell+1}$, we have $|N(b)|\geq0.1\theta k$ for all $b\in\cN(y)$ (by {\bf T2a}).
Consequently, \Lem~\ref{Lemma_B2} applies to $b\in\cN(y)$, whence
	$\abs{\Delta_{y\ra b}^{\brk\ell}-\Delta_{y\ra b'}^{\brk\ell}}\leq\delta^3$
		for all $y\in V_t,\,b,b'\in\cN(y)$. 
Hence, 
	\begin{eqnarray}\label{eqxixa2}
	\abs{\Delta_{y\ra b}^{\brk\ell}-\Delta_y}\leq\delta^3\qquad\mbox{for all $y\in V_t,b\in\cN(y)$}.
	\end{eqnarray}
Consequently, we obtain for  $x\not\in T_2\brk{\ell+1}$
	\begin{eqnarray}\nonumber
	\max_{a\in N(x)}\abs{2\xi_x-\xi_{x\ra a}^{\brk{\ell+1}}}
		\\
		&\hspace{-6cm}=&\hspace{-3cm}\max_{a\in N(x)}\bigg|
			\vecone_{a\in N_{\leq1}(x,T\brk\ell)}\cdot\sum_{y\in N(a)\setminus\cbc x}
				2^{1-|N(b)|}\sign\bc{x,b}\sign\bc{y,b}\Delta_y
			\nonumber\\
		&\hspace{-6cm}&\hspace{-3cm}\qquad\qquad
			+\sum_{b\in N_{\leq1}^{\brk{\ell+1}}\bc{x\ra a}}\sum_{y\in N(b)\setminus\cbc x}2^{1-|N(b)|}\sign\bc{x,b}\sign\bc{y,b}
				(\Delta_y-\Delta_{y\ra b}^{\brk{\ell}})\bigg|\nonumber\\
		&\hspace{-6cm}\leq&\hspace{-3cm}
			\sum_{y\in N(a)\setminus\cbc x}2^{1-|N(a)|}\abs{\Delta_{y}}+\hspace{-4mm}
			\sum_{b\in N_{\leq1}^{\brk{\ell+1}}\bc{x\ra a}}\sum_{y\in N(b)\setminus\cbc x}2^{1-|N(b)|}\abs{\Delta_{y\ra a}^{\brk\ell}-\Delta_{y}}\nonumber\\
		&\hspace{-6cm}\stacksign{(\ref{eqxixa2})}{\leq}&\hspace{-3cm}\;
			|N(a)|2^{-|N(a)|}
				+\delta^3\sum_{b\in N\bc{x}}|N(b)|2^{1-|N(b)|}
			\nonumber\\
		&\hspace{-6cm}\leq&\hspace{-3cm}\;10k\theta2^{-0.1k\theta}+10\delta^3k\theta\sum_{b\in N\bc{x}}2^{1-|N(b)|}
					\qquad\mbox{[as $0.1k\theta\leq|N(b)|\leq10k\theta$  by {\bf T2a}]}\nonumber\\
		&\hspace{-6cm}\leq&\hspace{-3cm}\;
			10^{-4}\delta\qquad\qquad\qquad\qquad\qquad\qquad\qquad\quad\ \ 
			\mbox{[by {\bf T2b}]}.
			\label{eqxiaxi}
	\end{eqnarray}
If $x\not\in T_2\brk{\ell+1}$ is such that
$\abs{\xi_x}\leq\delta^2$, then (\ref{eqxiaxi}) implies that
	$\abs{\xi_{x\ra a}}\leq2\cdot10^{-4}\delta$
for any $a\in N(x)$.
Therefore, the assertion follows from~(\ref{eqXi2}).
\qed\end{proof}

\medskip\noindent\emph{Proof of \Prop~\ref{Prop_T1bound}.}
Let $S$ be the set of all variables $x\not\in T_2\brk{\ell+1}$ such that
$\max_{a\in N(x)}|\sigma_{x\ra a}^{\brk{\ell+1}}|\leq 0.003\delta$
and $\max_{a\in N(i)}|\xi_{x\ra a}^{\brk{\ell+1}}|\leq0.001\delta$.
Then \Prop~\ref{Prop_P0} entails that for any $x\in S$ and $a\in N(x)$
	$$\abs{\ln P_{\leq1}^{\brk{\ell+1}}\bc{x\ra a}}\leq\abs{L_{x\ra a}^{\brk{\ell+1}}}+10^{-3}\delta
		\leq|\sigma_{x\ra a}^{\brk{\ell+1}}|+|\xi_{x\ra a}^{\brk{\ell+1}}|+10^{-3}\delta
			\leq0.005\delta.$$
Hence,
	$\abs{P_{\leq1}^{\brk{\ell+1}}\bc{x\ra a}-1}\leq0.01\delta$  for all $x\in S$, $a\in N(x)$,
and therefore
	$$T_1\brk{\ell+1}\setminus T_2\brk{\ell+1}\subset V_t\setminus(S\cup T_2\brk{\ell+1})).$$
Finally, \Lem s~\ref{Lemma_sigma} and~\ref{Lemma_xi} imply
	$\abs{T_1\brk{\ell+1}\setminus T_2\brk{\ell+1}}\leq\abs{V_t\setminus(S\cup T_2\brk{\ell+1})}\leq\delta\theta n/6.$
\qed

\subsection{Proof of \Prop~\ref{Prop_P0}}\label{Sec_P0}

We begin by approximating
$\ln(\mu_{b\ra x}^{\brk{\ell}}(1)/\mu_{b\ra x}^{\brk{\ell}}(-1))$ by a linear function.
In this section we let
 $O_{\rho}\bc\cdot$ denote an asymptotic bound that holds in the limit of large $\rho$.
 That is, $f(\rho)=O(g(\rho))$ if there exist $C>0$, $\rho_*>0$ such that
 $|f(\rho)|\leq C|g(\rho)|$ for $\rho>\rho_*$.

\begin{lemma}\label{Lemma_lin1}
Let $x\in V_t$, $a\in N(x)$, and $b\in N_{\leq1}^{\brk{\ell+1}}\bc{x\ra a}$.
Then $\mu_{b\ra x}^{\brk{\ell}}(-1)>0$ and
	\begin{eqnarray}\nonumber
	\ln\bcfr{\mu_{b\ra x}^{\brk{\ell}}(1)}{\mu_{b\ra x}^{\brk{\ell}}(-1)}&=&
		2^{1-|N(b)|}\brk{\sign\bc{x,b}+2\sum_{y\in N(b)\setminus\cbc x}\sign\bc{x,b}\sign\bc{y,b}\Delta_{y\ra b}^{\brk\ell}}\\
		&&\qquad+
			\frac{(\theta k\delta+|N(b)\cap T\brk\ell\setminus\cbc x|)}{2^{|N(b)|}}\cdot O_{\rho}(k\theta \delta).\label{eqlin1}
	\end{eqnarray}
\end{lemma}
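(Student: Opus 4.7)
The plan is to expand the BP update rule~(\ref{eqBPai}) for $\mu_{b\ra x}(\zeta)\brk\ell$ to first order in the small quantities $\Delta_{y\ra b}\brk\ell$, keeping careful track of the error. By~(\ref{eqBPai}), one of the two values $\mu_{b\ra x}(\zeta)\brk\ell$ equals $1$ exactly, while the other equals $1-P_b$ where
$$P_b=\prod_{y\in N(b)\setminus\{x\}}\mu_{y\ra b}\bc{\tfrac{1-\sign(y,b)}2}\brk\ell=2^{1-|N(b)|}\prod_{y\in N(b)\setminus\{x\}}\bc{1-2\sign(y,b)\Delta_{y\ra b}\brk\ell},$$
using $\mu_{y\ra b}(1)\brk\ell=\tfrac12+\Delta_{y\ra b}\brk\ell$ and $\mu_{y\ra b}(0)\brk\ell=\tfrac12-\Delta_{y\ra b}\brk\ell$.

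Then I would split the inner product according to the exceptional set $J:=N(b)\cap T\brk\ell\setminus\{x\}$. The definition of $N_{\leq 1}(x\ra a)\brk{\ell+1}=N_{\leq 1}(x,T\brk\ell)\setminus\{a\}$ forces $|J|\leq 1$. For $y\not\in J\cup\{x\}$ we have $y\not\in B\brk\ell$ by \Prop~\ref{Prop_TB}, hence $|\Delta_{y\ra b}\brk\ell|<0.1\delta$; since $|N(b)|\leq 10\theta k$, the standard expansion $\ln\prod(1-u_y)=-\sum u_y+O(\sum u_y^2)$ applied to these $O(\theta k)$ small factors gives
$$\prod_{y\not\in J\cup\{x\}}\bc{1-2\sign(y,b)\Delta_{y\ra b}\brk\ell}=1-2\!\!\!\sum_{y\not\in J\cup\{x\}}\!\!\!\sign(y,b)\Delta_{y\ra b}\brk\ell+O\bc{(\theta k\delta)^2}.$$
The single bad factor (if $|J|=1$) lies in $[0,2]$; combining it with the above expansion, the cross term contributes an additional error of order $|J|\cdot\theta k\delta$, yielding
$$P_b=2^{1-|N(b)|}\brk{1-2\!\!\sum_{y\in N(b)\setminus\{x\}}\!\!\sign(y,b)\Delta_{y\ra b}\brk\ell}+\frac{O((\theta k\delta+|J|)\cdot\theta k\delta)}{2^{|N(b)|}}.$$

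Finally, since $|N(b)|\geq 0.1\theta k$ and the product factors are in $[0,2]$, one verifies $P_b\leq 4\cdot 2^{-|N(b)|}<\tfrac12$ for $\rho_0$ large, giving $\mu_{b\ra x}(0)\brk\ell=1-P_b>0$. Writing $\ln[\mu_{b\ra x}(1)\brk\ell/\mu_{b\ra x}(0)\brk\ell]=\pm\ln(1-P_b)$ according to $\sign(x,b)$ and Taylor expanding $\ln(1-P_b)=-P_b+O(P_b^2)$ with $P_b^2=O(4^{-|N(b)|})$ absorbed in the error, substitution of the expansion of $P_b$ produces~(\ref{eqlin1}). The main technical hurdle is purely the bookkeeping: three error sources---second-order terms in the product expansion, the cross term involving the potentially non-small $\Delta_{y\ra b}\brk\ell$ for $y\in J$, and the quadratic $P_b^2$ term---must all fit into the compact bound $(\theta k\delta+|J|)\cdot O(\theta k\delta)/2^{|N(b)|}$, which relies on both $\theta k\delta\ll 1$ and $\theta k\geq\rho\geq\rho_0$.
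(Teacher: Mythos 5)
Your proof follows essentially the same route as the paper's: expand the clause-to-variable message via~(\ref{eqBPai}), rewrite the inner product as $2^{1-|N(b)|}\prod_y(1-2\sign(y,b)\Delta_{y\ra b}\brk\ell)$, split off the at-most-one factor corresponding to $N(b)\cap T\brk\ell\setminus\{x\}$, Taylor-expand the logarithm over the small factors (small by \Prop~\ref{Prop_TB}), and absorb the quadratic $P_b^2=O(4^{-|N(b)|})$ error using $|N(b)|\geq0.1\theta k$ and the choice of $c$. The only differences are cosmetic: the paper argues the two cases $\sign(x,b)=\pm1$ separately rather than via the $\pm\ln(1-P_b)$ shorthand, and it establishes positivity of $\mu_{b\ra x}(0)\brk\ell$ from \Lem~\ref{Lemma_P1T} rather than from a direct bound on $P_b$ (and when $\sign(x,b)=-1$ one has $\mu_{b\ra x}(0)\brk\ell=1$ exactly, so the $1-P_b$ description applies only to the other coordinate); neither affects correctness.
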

\begin{proof}
The definition of the set $N_{\le1}^{\brk{\ell+1}}\bc{x\ra a}$ ensures that for all
$b\in N_{\le1}^{\brk{\ell+1}}\bc{x\ra a}$ we have $|N(b)\cap T\brk\ell|\leq2$, while $|N(b)|\geq0.1\theta k$.
Therefore, \Lem~\ref{Lemma_P1T} shows that $|\frac12-\mu_{b\ra x}^{\brk{\ell}}(-1)|\leq\delta^2$
	(recall from \Prop~\ref{Prop_TB} that $B\brk\ell\subset T\brk\ell$).
Furthermore, $b$ is not redundant, and thus not a tautology,
because otherwise $N(b)\subset T_2\brk\ell$ due to {\bf T2a}.

Let $s=\sign\bc{x,b}$.
Then $\mu_{b\ra x}^{\brk{\ell}}(s)=1$ and thus the definition~(\ref{eqBPai}) of the messages 
$\mu_{b\ra x}^{\brk{\ell}}(\pm1)$ yields
	\begin{eqnarray}
	\frac{\mu_{b\ra x}^{\brk{\ell}}(-s)}{\mu_{b\ra x}^{\brk{\ell}}(s)}&=&
		\mu_{b\ra x}^{\brk{\ell}}(-s)=
			1-\prod_{y\in N(b)\setminus\cbc x}\mu_{y\ra b}^{\brk{\ell}}(-\sign\bc{y,b})\nonumber\\
			&=&1-\prod_{y\in N(b)\setminus\cbc x}\frac12-\sign\bc{y,b}\Delta_{y\ra b}^{\brk{\ell}}\nonumber\\
			&=&1-2^{1-|N(b)|}\prod_{y\in N(b)\setminus\cbc x}1-2\sign\bc{y,b}\Delta_{y\ra b}^{\brk\ell}.
				\label{eqlin11}
	\end{eqnarray}
Let $\Gamma=N(b)\setminus(T\brk\ell\cup\cbc x)$.
As \Prop~\ref{Prop_TB} shows that $T\brk\ell\supset B\brk\ell$ contains all biased variables, we have
$\abs{\Delta_{y\ra b}^{\brk{\ell}}}\leq\delta$ for all $y\in \Gamma$.
Therefore, we can use the approximation $\abs{\ln(1-z)+z}\leq z^2$ 
for $|z|\leq\frac12$
to obtain
	\begin{eqnarray}\nonumber
	\abs{\brk{\ln\prod_{y\in \Gamma}1-2\sign\bc{y,b}\Delta_{y\ra b}^{\brk\ell}}+
		\sum_{y\in \Gamma}2\sign\bc{y,b}\Delta_{y\ra b}^{\brk\ell}}\\
		&\hspace{-10cm}=&\hspace{-5cm}\;\nonumber
			\abs{\sum_{y\in \Gamma}\ln\bc{1-2\sign\bc{y,b}\Delta_{y\ra b}^{\brk\ell}}+
		2\sign\bc{y,b}\Delta_{y\ra b}^{\brk\ell}}\\
		&\hspace{-10cm}\leq&\hspace{-5cm}\;
			4\sum_{y\in \Gamma}{\Delta_{y\ra b}^{\brk\ell}}^2
				\leq40\theta k\delta^2;
			\label{eqlin12pre}
	\end{eqnarray}
in the last step, we used that $|N(b)|\leq10\theta k$ for all $b\in N_{\leq1}^{\brk{\ell+1}}\bc{x\ra a}$.
Furthermore, $|\Gamma|\leq|N(b)|\leq10\theta k$ and $|\Delta_{y\ra b}^{\brk\ell}|\leq\delta$ for all $y\in\Gamma$.
Hence,
	\begin{equation}\label{eqNotnagel}
	\abs{\sum_{y\in \Gamma}2\sign\bc{y,b}\Delta_{y\ra b}^{\brk\ell}}\leq20\delta k\theta.
	\end{equation}
Therefore, taking exponentials in~(\ref{eqlin12pre}), we obtain
	\begin{eqnarray}\nonumber
	\prod_{y\in \Gamma}1-2\sign\bc{y,b}\Delta_{y\ra b}^{\brk\ell}&=&
		\exp\brk{O(\theta k\delta)^2-\sum_{y\in \Gamma}2\sign\bc{y,b}\Delta_{y\ra b}^{\brk\ell}}\\
		&=&
		1-\sum_{y\in \Gamma}2\sign\bc{y,b}\Delta_{y\ra b}^{\brk\ell}+O_\rho(\theta k\delta)^2.
			\label{eqlin12}
	\end{eqnarray}
Furthermore, the definition of $N_{\leq1}^{\brk{\ell+1}}\bc{x\ra a}$ ensures that
	$$|N(b)\setminus(\Gamma\cup\cbc x)=|N(b)\cap T\brk\ell\setminus\cbc x|\leq1.$$
If there is $y_0\in N(b)\cap T\brk\ell\setminus\cbc x$, then (\ref{eqNotnagel}) and~(\ref{eqlin12}) yield
	\begin{eqnarray*}
	\prod_{y\in N(b)\setminus\cbc x}1-2\sign\bc{y,b}\Delta_{y\ra b}^{\brk\ell}&=&
		(1-2\sign\bc{y_0,b}\Delta_{y_0\ra b}^{\brk\ell})\cdot\prod_{y\in \Gamma}1-2\sign\bc{y,b}\Delta_{y\ra b}^{\brk\ell}\\
		&=&
			1-2\brk{\sum_{y\in N(b)\setminus\cbc x}\sign\bc{y,b}\Delta_{y\ra b}^{\brk\ell}}+
				O_\rho(\theta k\delta).
	\end{eqnarray*}
Hence, in any case we have
	\begin{eqnarray*}
	\prod_{y\in N(b)\setminus\cbc x}1-2\sign\bc{y,b}\Delta_{y\ra b}^{\brk\ell}&=&
		1-2\brk{\sum_{y\in N(b)\setminus\cbc x}\sign\bc{y,b}\Delta_{y\ra b}^{\brk\ell}}\\
			&&\qquad\qquad\qquad\qquad
				+(\theta k\delta+|N(b)\cap T\brk\ell\setminus\cbc x|)\cdot O_\rho(\theta k\delta).
	\end{eqnarray*}
Combining this with~(\ref{eqlin11}) and using the approximation $\abs{\ln(1-z)+z}\leq z^2$ for $|z|\leq1/2$, we see that
	\begin{eqnarray*}
	\ln\bcfr{\mu_{b\ra x}^{\brk{\ell}}(-s)}{\mu_{b\ra x}^{\brk{\ell}}(s)}
		&=&-2^{1-|N(b)|}\brk{1-2\sum_{y\in N(b)\setminus\cbc x}\sign\bc{y,b}\Delta_{y\ra b}^{\brk\ell}}\\
		&&\qquad	+2^{1-|N(b)|}(\theta k\delta+|N(b)\cap T\brk\ell\setminus\cbc x|)\cdot O_\rho(\theta k\delta),\nonumber
	\end{eqnarray*}
whence the assertion follows.
\qed\end{proof}

\begin{proof}[\Prop~\ref{Prop_P0}]
Suppose $x\not\in T_2\brk{\ell+1}$.
By the definition of $P_{\leq1}^{\brk{\ell+1}}\bc{x\ra a}$ we have
	\begin{eqnarray*}
	\ln P_{\leq1}^{\brk{\ell+1}}\bc{x\ra a}&=&\sum_{b\in N_{\leq1}^{\brk{\ell+1}}\bc{x\ra a}}
			\ln\bcfr{\mu_{b\ra x}^{\brk{\ell}}(1)}{\mu_{b\ra x}^{\brk{\ell}}(-1)}.
	\end{eqnarray*}
Hence, \Lem~\ref{Lemma_lin1} yields
	\begin{eqnarray}		\label{eqProp_P01}
	\ln P_{\leq1}^{\brk{\ell+1}}\bc{x\ra a}
		&=&L_{x\ra a}^{\brk{\ell+1}}+
				\hspace{-5mm}\sum_{b\in N_{\leq1}^{\brk{\ell+1}}\bc{x\ra a}}	\hspace{-5mm}
					2^{-|N(b)|}(\theta k\delta+|N(b)\cap T\brk\ell\setminus\cbc x|)O_\rho(\theta k\delta).
	\end{eqnarray}
To complete the proof, we need to estimate the second summand.
Condition {\bf T2b} implies
	\begin{eqnarray}\nonumber
	O_\rho(\delta\theta k)^2\sum_{b\in N_{\leq1}^{\brk{\ell+1}}\bc{x\ra a}}2^{-|N(b)|}&\leq&
		O_\rho(\delta\theta k)^2	\sum_{b\in N(x)}2^{-|N(b)|}\\
		&\leq&\frac{O_\rho(\delta\theta k)^2}{\delta(\theta k)^5}\leq\frac{O_\rho(\delta)}{(\theta k)^3}\leq10^{-4}\delta.
		\label{eqProp_P02}
	\end{eqnarray}
Furthermore,  {\bf T2c} yields
	\begin{eqnarray}\nonumber
	O_\rho(\theta k\delta)\sum_{b\in N_{\leq1}^{\brk{\ell+1}}\bc{x\ra a}}2^{-|N(b)|}|N(b)\cap T\brk\ell\setminus\cbc x|&\leq&
		O_\rho(\theta k\delta)\sum_{b\in N_{1}\bc{x,T\brk\ell}}2^{-|N(b)|}\\
		&\leq&O_\rho(\theta k\delta)\cdot\rho(\theta k)^5\delta\leq10^{-4}\delta.
			\label{eqProp_P03}
	\end{eqnarray}
Finally, the assertion follows by plugging~(\ref{eqProp_P02}) and~(\ref{eqProp_P03}) into~(\ref{eqProp_P01}).
\qed\end{proof}

\subsection{Completing the proof of \Thm~\ref{Thm_dynamics}}\label{Sec_finish}

We are going to show that $\abs{\mu_x(\Phi_{t},\omega)-\frac12}\leq\delta=\delta_t$ for all $x\in V_t\setminus T\brk{\omega+1}$.
This will imply \Thm~\ref{Thm_dynamics}, because $\abs{T\brk{\omega+1}}\leq\delta_t(n-t)$ by \Prop~\ref{Prop_Tbound}.

Thus, let $x\in V_t\setminus T\brk{\omega+1}$.
\Cor~\ref{Cor_clause2var} shows that $\mu_{b\ra x}^{\brk\omega}(\zeta)>0$ for $\zeta=\pm1$.
Hence,
	\begin{eqnarray*}
	P(\zeta)&=&\prod_{b\in N(x)}\mu_{b\ra x}^{\brk\omega}(\zeta)>0\qquad\mbox{for $\zeta=\pm1$.}
	\end{eqnarray*}
Recall from~(\ref{eqBPmarginal}) that
	$$\mu_x(\Phi_{t},\omega)=\frac{P(1)}{P(-1)+P(1)}.$$
If $N(x)=\emptyset$, then trivially $P(-1)=P(1)=1$ and thus $\mu_x(\Phi_{t-1},\omega)=\frac12$.
Thus, assume that $N(x)\not=\emptyset$ and pick an arbitrary $a\in N(x)$.
Then
	\begin{eqnarray*}
	P(\zeta)&=&\mu_{a\ra x}^{\brk\omega}(\zeta)\cdot\mu_{x\ra a}^{\brk{\omega+1}}(\zeta)\quad\mbox{for }\zeta=\pm1.
	\end{eqnarray*}
Since $x\not\in T\brk{\omega+1}\supset B\brk{\omega+1}$ (by \Prop~\ref{Prop_TB}), we have
	$$\abs{\mu_{x\ra a}^{\brk{\omega+1}}(\zeta)-\frac12}=\abs{\Delta_{x\ra a}^{\brk{\omega+1}}}\leq0.1\delta\qquad\mbox{for }\zeta=\pm1.$$
Therefore, 
	\begin{eqnarray*}
	\ln\frac{\mu_{x\ra a}^{\brk{\omega+1}}(-1)}{\mu_{x\ra a}^{\brk{\omega+1}}(1)}&\leq&
		\ln\frac{1+0.2\delta}{1-0.2\delta}\leq0.5\delta,\quad\mbox{and analogously}\\
	\ln\frac{\mu_{x\ra a}^{\brk{\omega+1}}(-1)}{\mu_{x\ra a}^{\brk{\omega+1}}(1)}&\geq&-0.5\delta.
	\end{eqnarray*}
Furthermore, since $x\not\in T\brk{\omega+1}$ \Cor~\ref{Cor_clause2var} yields
	$$\abs{\ln\frac{\mu_{a\ra x}^{\brk\omega}(-1)}{\mu_{a\ra x}^{\brk\omega}(1)}}\leq2\exp(-k_1/2)\leq\delta^2.$$
Hence,
	\begin{eqnarray*}
	\abs{\ln\frac{P(-1)}{P(1)}}&\leq&
		\abs{\ln\frac{\mu_{x\ra a}^{\brk{\omega+1}}(-1)}{\mu_{x\ra a}^{\brk{\omega+1}}(1)}}+
			\abs{\ln\frac{\mu_{a\ra x}^{\brk{\omega}}(-1)}{\mu_{a\ra x}^{\brk{\omega}}(1)}}
		\leq0.5\delta+\delta^2\leq0.51\delta.
	\end{eqnarray*}
Therefore, letting $z=\ln\frac{P(-1)}{P(1)}$, we obtain
	\begin{eqnarray*}
	\abs{\frac12-\mu_x^{\brk\omega}(\Phi_{t-1})}&=&\abs{\frac12-\frac{P(1)}{P(-1)+P(1)}}
		=\abs{\frac12-\frac{1}{1+\exp(z)}}\leq\abs{\frac{1-\exp(z)}{2}}<\delta,
	\end{eqnarray*}
as desired.

\section{Proof of \Prop~\ref{Prop_reasonable}}\label{Apx_reasonble}

Recall from~(\ref{eqdeltat}) that $\delta_s=\exp(-c(1-s/n) k)$ and that
$\hat t=(1-\frac{\ln\rho}{c^2k})n$.
Suppose that $1\leq t\leq \hat t$.
Then $\theta=1-t/n$ satisfies $\theta k\geq\ln(\rho)/c^2$.
We assume throughout that $\rho=kr/2^k\geq\rho_0$ for some large enough number $\rho_0$;
	in particular, we assume that $\rho_0\geq\exp(1/c)$.
Set
	$$\delta=\delta_t=\exp(-ck\theta)$$
for brevity.
Then \Lem~\ref{Lemma_Deltat} yields
	\begin{eqnarray}\label{eqplatt}
	\delta\theta n&>&10^{15}\Delta_t.
	\end{eqnarray}

To prove \Prop~\ref{Prop_reasonable}, we will study two slightly different models of random $k$-CNFs.
In the first ``binomial'' model $\PHIbin$, we obtain a $k$-CNF by including each of the $(2n)^k$ possible clauses over $V=\cbc{x_1,\ldots,x_n}$
with probability $p=m/(2n)^k$ independently, where each clause is an ordered $k$-tuple of not necessarily distinct literals.
Thus, $\PHIbin$ is a random set of clauses, and $\Erw|\PHIbin|=m$.

In the second model, we choose a \emph{sequence} $\PHIseq$ of $m$ independent $k$-clauses $\PHIseq(1),\ldots,\PHIseq(m)$,
	each of which consists of $k$ independently chosen literals.
Thus, the probability of each individual sequence is $(2n)^{-km}$.
The sequence $\PHIseq'$ corresponds to the $k$-CNF $\cbc{\PHIseq(1),\ldots,\PHIseq(m)}$ with \emph{at most} $m$ clauses.
The following well-known fact relates $\PHI$ to $\PHIbin$, $\PHIseq$.

\begin{fact}\label{Fact_models}
For any event $\cE$ we have 
	\begin{eqnarray*}
	\pr\brk{\PHI\in\cE}&\leq&O(\sqrt m)\cdot\pr\brk{\PHIbin\in\cE},\\
	\pr\brk{\PHI\in\cE}&\leq&O(1)\cdot\pr\brk{\PHIseq\in\cE}.
	\end{eqnarray*}
\end{fact}

Due to Fact~\ref{Fact_models} and~(\ref{eqplatt}), it suffices to prove that the statements {\bf Q1}--{\bf Q4} hold for either of $\PHI$, $\PHIbin$, $\PHIseq$
with probability at least $1-\exp(-10^{-13}\delta\theta n)$.

\subsubsection{Establishing  Q1.}
We are going to deal with the number of variables that appear in ``short'' clauses first.

\begin{lemma}\label{Lemma_shortClauses}
With probability at least 
$1-\exp(-10^{-6}\delta\theta n)$
in $\PHI^t$ there are no more than
$\theta n\cdot 10^{-5}\frac{\delta}{\theta k}$ clauses of length less than $0.1\theta k$.
\end{lemma}
\begin{proof}
We are going to work with $\PHIbin$.
Let $L_j$ be the number of clauses of length $j$ in $\PHIbin^t$.
Then for any $j\in\brk k$ we have 
	$$\lambda_j=\Erw\brk{L_j}=m\cdot 2^{j-k}\bink{k}j\theta^j(1-\theta)^{k-j}
		=\frac{2^j\rho\theta n}j\bink{k-1}{j-1}\theta^{j-1}(1-\theta)^{k-j}.$$
Indeed, a clause has length $j$ in $\PHIbin^t$ iff it contains $j$ variables from the set $V_t$ of size $\theta n$ and $k-j$ variables from $V\setminus V_t$
	\emph{and} none of the $k-j$ variables from $V\setminus V_t$ occurs positively.
	The total number of possible clauses with these properties is $2^{j}\bink{k}j(\theta n)^j((1-\theta)n)^{k-j}$,
	and each of them is present in $\PHIbin^t$ with probability $p=m/(2n)^k$ independently.

Let's start by bounding the total number 
$L_*=\sum_{j<\theta k/10}L_j$ of ``short'' clauses.
Its expectation 
 is bounded by
	\begin{eqnarray*}
	\Erw\brk{L_*}&=&\sum_{j<\theta k/10}\lambda_j
		\leq2^{0.1\theta k}\rho\theta n\cdot\pr\brk{\Bin(k-1,\theta)<\theta k/10}\\
		&\leq&2^{0.1\theta k}\rho\theta n\cdot\exp(-\theta k/3)\qquad\mbox{\ \ [by \Lem~\ref{Lemma_Chernoff} (Chernoff)]}\\
		&\leq&\theta \exp(-\theta k/4)n\qquad\qquad\qquad\mbox{[as $\theta k\geq\ln(\rho)/c^2$]}.
	\end{eqnarray*}
Furthermore, $L_*$ is binomially distributed, because clauses appear independently  in $\PHIbin$.
Hence, again by
\Lem~\ref{Lemma_Chernoff} we have
	\begin{eqnarray}\nonumber
	\pr\brk{L_*>\theta n\cdot 10^{-5}\delta/(\theta k)}&\leq&
	\exp\brk{-\frac{10^{-5}\delta}{\theta k}\cdot\ln\bcfr{10^{-5}\delta/(\theta k)}{\exp(1-\theta k/4)}\cdot\theta n}\\
		&\leq&\exp\bc{-\frac{\delta}{5\cdot10^{5}\theta k}\cdot\theta k\cdot\theta n}\leq\exp\bc{-10^{-6}\delta\theta n}
			.
		\label{eqshortClauses}
	\end{eqnarray}
Hence, the assertion follows from~(\ref{eqshortClauses}) and Fact~\ref{Fact_models}.
\qed\end{proof}

\begin{corollary}\label{Cor_shortClauses}
With probability at least $1-\exp(-10^{-6}\delta\theta n)$
in $\PHI^t$  no more than $10^{-6}\delta\theta n$ variables appear in clauses
of length less than $0.1\theta k$.
\end{corollary}
\begin{proof}
This is immediate from \Lem~\ref{Lemma_shortClauses}.
\qed\end{proof}

\noindent
As a next step, we are going to bound the number of variables that appear in clauses of length $\geq10\theta k$.

\begin{lemma}\label{Lemma_longClauses}
With probability at least $1-\exp(-10^{-11}\delta\theta n)$ 
we have
$$\sum_{b\in\PHI^t:\abs{N(b)}>10k\theta}\abs{N(b)}\leq10^{-6}\delta\theta n.$$
\end{lemma}
\begin{proof}
For a given $\mu>0$ let $\cL_\mu$ be the
event that $\PHIseq^t$ has $\mu$ clauses so that the sum of the lengths of these clauses is at least $\lambda=10\theta k\mu$.
Then
	$$\pr\brk{\cL_\mu}\leq\bink m\mu\bink{k\mu}\lambda\theta^{\lambda}\bc{\frac12+\theta}^{k\mu-\lambda}.
		$$
Indeed, there are $\bink m\mu$ ways to choose $\mu$ places for these $\mu$ clauses in $\PHIseq$.
Once these have been specified, there are $k\mu$ literals that constitute the $\mu$ clauses, and we choose $\lambda$ whose
underlying variables are supposed to be in $V_t$;
the probability that this is indeed the case for all of these $\lambda$ literals is $\theta^\lambda$.
Moreover, in order for the each of the clauses to remain in $\PHIseq^t$,
the remaining $k\mu-\lambda$ literals must either be negative
or have underlying variables from $V_t$, leading to the $(\theta+1/2)^{k\mu-\lambda}$ factor.
Thus,
	\begin{eqnarray*}
	\pr\brk{\cL_\mu}
	&\leq&
		\bink{m}\mu\brk{\bc{\frac12+\theta}\bcfr{\eul}{5}^{10\theta}}^{k\mu}\qquad\qquad\qquad\qquad\mbox{[as $\lambda=10k\theta \mu$]}\\
	&\leq&\bcfr{\eul n\rho}{k\mu}^\mu\brk{(1+2\theta)\bcfr{\eul}{5}^{10\theta}}^{k\mu}
			\qquad\quad\qquad\qquad\mbox{[as $m=n\cdot 2^k\rho/k$]}\\
	&\leq&\brk{\frac{\eul n\rho}{k\mu}\bcfr{\eul}{4}^{10\theta k}}^\mu\\
	&=&\brk{\bcfr{10\eul\rho\theta n}{\lambda}^{1/(10\theta k)}\bcfr{\eul}{4}}^{\lambda}
		\qquad\qquad\qquad\qquad\mbox{[as $\lambda=10k\theta \mu$]}.
	\end{eqnarray*}
Hence, if $\lambda\geq10^{-6}\delta\theta n$ we get
	\begin{eqnarray*}\nonumber
	\pr\brk{\cL_\mu}
		&\leq&
		\brk{\bcfr{10^7\eul\rho}\delta^{1/10\theta k}\bcfr{\eul}{4}}^\lambda
		\leq\bcfr{\eul}{3}^\lambda\quad\qquad\mbox{[as $k\theta\geq\ln(\rho)/c^2$ and $\delta=\exp(-ck\theta)$]}\\
		&\leq&\exp(-10^{-10}\delta\theta n)
			.\label{eqlongClauses1}
	\end{eqnarray*}
Thus, we see that in $\PHIseq^t$ with probability at least $1-\exp(-10^{-10}\delta\theta n)$ we have
	\begin{equation}\label{eqLongClauseLengths}
	\sum_{b:\abs{N(b)}>10k\theta}\abs{N(b)}\leq10^{-6}\delta\theta n.
	\end{equation}
Hence, Fact~\ref{Fact_models} implies that (\ref{eqLongClauseLengths}) holds in $\PHI^t$ with probability at least
$1-\exp(-10^{-11}\delta\theta n)$.
\qed\end{proof}

\begin{corollary}\label{Cor_longClauses}
With probability at least $1-\exp(-10^{-11}\delta\theta n)$ 
no more than $10^{-6}\delta\theta n$ variables appear in clauses
of length greater than $10\theta k$.
\end{corollary}
\begin{proof}
The number of such variables 
is bounded by
	$\sum_{b:|N(b)|>10\theta k}|N(b)|.$
Therefore, the assertion follows from \Lem~\ref{Lemma_longClauses}.
\qed\end{proof}

We now come to the second part of {\bf Q1.}
We start with the following simple observation.

\begin{lemma}\label{Lemma_muj}
Let $x\in V_t$.
The expected number of clauses of length $j$ in $\PHIbin^t$ 
where $x$ is the underlying variable of the $l$th literal is
	\begin{equation}\label{eqmuj}
	\mu_j=\rho\cdot\frac{2^j}j\bink{k-1}{j-1}\theta^{j-1}(1-\theta)^{k-j+1}\leq 2^j\rho/j.
	\end{equation}
\end{lemma}
\begin{proof}
There are $2^j\bink{k}{j}(\theta n)^{j-1}((1-\theta)n)^{k-j+1}$ possible clauses that have exactly $j$
	literals whose underlying variable is in $V_t$ such that the underlying variable of the
	$j$th such literal is $x$.
	Each such clause is present in $\PHIbin$ with probability $p=m/(2n)^k=\frac{\rho}k n^{1-k}$ independently.
\qed\end{proof}

\begin{lemma}
With probability at least $1-\exp(-10^{-12}\delta\theta n)$ no more than $10^{-4}\delta\theta n$ variables $x\in V_t$ are such that
$\delta(\theta k)^3\sum_{b\in N(x)}2^{-|N(b)|}>1$.
\end{lemma}
\begin{proof}
For $x\in V_t$ let $X_j(x)$ be the 
number of clauses of length $j$ in $\PHIbin^t$ that contain a $x$, and let $X_{jl}(x)$ be the number
of such clauses where $x$ is the underlying variable of the $l$th literal of that clause $(1\leq l\leq j$).
Then $\Erw\brk{X_{jl}(x)}=\mu_j$, with $\mu_j$ as in~(\ref{eqmuj}).
Since $1/\delta=\exp(ck\theta)$ and $\theta\geq\ln(\rho)/c^2$, 
we see that $2^j\delta^{-1}(\theta k)^{-5}/j>100\mu_j$.
Hence, \Lem~\ref{Lemma_Chernoff} (the Chernoff bound) yields
	\begin{eqnarray*}
	\pr\brk{X_{jl}(x)>10(\mu_j+2^j\delta^{-1}(\theta k)^{-5}/j)}&\leq&\zeta,\quad\mbox{with }\zeta=\exp(-10/(\delta(\theta k)^5)).
	\end{eqnarray*}
Let $V_{jl}$ be the set of all variables $x\in V_t$ such that $X_{jl}(x)>10(\mu_j+2^j\delta^{-1}(\theta k)^{-5}/j)$.
Since the random variables $(X_{jl}(x))_{x\in V_t}$ are mutually independent, \Lem~\ref{Lemma_Chernoff} (the Chernoff bound) yields
	$$\pr\brk{|V_{jl}|>\frac{\delta}{(\theta k)^9}\cdot \theta n}\leq
		\exp\brk{-\frac{\delta\theta n}{(\theta k)^9}\cdot\ln\bcfr{\delta}{\eul(\theta k)^9\zeta}}$$
Since
	$\zeta^{-1}=\exp(10/(\delta(\theta k)^5))=\exp\brk{10\exp(ck\theta)/(\theta k)^5}$
and $k\theta\geq\ln(\rho)/c^2\gg1$,
we have 
	$$\ln\bcfr{\delta}{\eul(\theta k)^9\zeta}\geq-\ln(\zeta)/2,$$
whence
	\begin{eqnarray}
	\pr\brk{|V_{jl}|>\frac{\delta}{(\theta k)^9}\cdot \theta n}
		&\leq&\exp\brk{\frac{\delta\theta n}{2(\theta k)^9}\cdot\ln\zeta}
			\leq\exp\brk{-\frac{\theta n}{(\theta k)^{15}}}\leq\exp(-\delta\theta n).
		\label{eqVijbound}
	\end{eqnarray}
Furthermore, if $x\not\in V_{jl}$ for all $1\leq j\leq10\theta k$ and all $1\leq l\leq j$, then
	\begin{eqnarray*}
	\sum_{b\in N(x):|N(b)|\leq10\theta k}2^{-|N(b)|}
		&\leq&10\sum_{j\leq10\theta k}2^{-j}(j\mu_j+2^j\delta^{-1}(\theta k)^{-5})\\
		&\leq&100\delta^{-1}(\theta k)^{-4}+10\sum_{j\leq10\theta k}j2^{-j}\mu_j\\
		&\leq&100\delta^{-1}(\theta k)^{-4}+10\rho
			<\delta^{-1}(\theta k)^{-3}
				,
	\end{eqnarray*}
where we used that $\theta k\geq\ln(\rho)/c^2$, so that $1/\delta\geq(\theta k)^5\rho$.
Hence, the assertion follows from~(\ref{eqVijbound}), Fact~\ref{Fact_models}
and the bound on the number of variables
in clauses of length $>10\theta k$ provided by \Lem~\ref{Lemma_longClauses}.
\qed\end{proof}

\subsubsection{Establishing Q2.}
Let $T\subset V_t$ be a set of size $|T|\leq\delta\theta n$.
For a variable $x$ we let $\cQ(x,i,j,l,T)$ be the number of clauses $b$ of $\PHIbin^t$
such that the $i$th literal is either  $x$ or $\neg x$,
$|N(b)|=j$, and $|N(b)\cap Q\setminus\cbc x|=l$.

\begin{lemma}\label{Lemma_nastyQ}
Suppose that $l\geq 1$, $j-l>k_1$  and $0.1\theta k\leq j\leq10\theta k$.
Let
	$$\gamma_{j,l}=\left\{\begin{array}{cl}
			10 j2^j\delta\rho&\mbox{ if }l=1,\\
			102^{j-l}\delta^{1.9}&\mbox{ if }l>1.
			\end{array}\right.$$
Then for any $i,x,T$ we have
	$\pr\brk{\cQ(x,i,j,l,T)>\gamma_{j,l}}\leq\exp(-\exp(c^{2/3}\theta k)).$
\end{lemma}
\begin{proof}
The random variable $\cQ(x,i,j,l,T)$ has a binomial distribution,
because clauses appear independently in $\PHIbin$.
With $\mu_j$ from~(\ref{eqmuj}) we have for $l>1$
	$$\Erw\brk{\cQ(x,i,j,l,T)}\leq\bink{j}l\delta^l\mu_j
		\leq\rho\bink jl\delta^l2^j
		\leq2^{j-l}\delta^{1.9};$$
in the last step we used that $\delta^{0.05}\leq1/\rho$, which follows
	from our assumption that $\theta k\geq\ln(\rho)/c^2$, and that $2^l\bink jl\leq (2j)^l\leq(20k\theta)^l\leq\delta^{0.05l}$.
Hence, by \Lem~\ref{Lemma_Chernoff} (the Chernoff bound) in the case $j-l>k_1=\sqrt c\theta k$, $l>1$
we get
	$$\pr\brk{\cQ(x,i,j,l,T)>10\cdot2^{j-l}\delta^{1.9}}\leq\exp(-2^{j-l}\delta^{1.9})
			\leq\exp(-2^{k_1}\delta^{1.9})\leq\exp(-\exp(c^{2/3}\theta k)),$$
as $\delta=\exp(-ck\theta)$.

By a similar token, in the case $l=1$ we have
	$\Erw\brk{\cQ(x,i,j,l,T)}\leq j\delta^l\mu_j
		\leq\rho\delta 2^j$.
Hence, once more by the Chernoff bound
	$$\pr\brk{\cQ(x,i,j,l,T)>10\cdot2^{j}\delta\rho}\leq\exp(-2^{j}\delta\rho)
			\leq\exp(-2^{k_1}\delta)\leq\exp(-\exp(c^{2/3}\theta k)),$$
as claimed.
\qed\end{proof}

\noindent
Let $\cZ(i,j,l,T)$ be the number of variables $x\in V_t$ for which $\cQ(x,i,j,l,T)>\gamma_{j,l}$.

\begin{lemma}\label{Lemma_nastyZ}
Suppose that $l\geq 1$, $j-l>k_1$ and $0.1\theta k\leq j\leq10\theta k$.
Then for any $i,T$ we have
	$$\pr\brk{\cZ(i,j,l,T)>\delta\theta n/(\theta k)^4}\leq\exp\brk{-\frac{\delta\theta n}{2(\theta k)^4}\cdot\exp(c^{2/3}\theta k)}.$$
\end{lemma}
\begin{proof}
Whether a variable $x\in V_t$ contributes to $\cZ(i,j,l,T)$ depends only on those clauses of $\PHIbin^t$ whose
$i$th literal reads either $x$ or $\neg x$.
Since these sets of clauses are disjoint for distinct variables and as clauses appear independently in $\PHIbin$,
 $\cZ(i,j,l,T)$ is a binomial random variable. 
By \Lem~\ref{Lemma_nastyQ},
	$$\Erw\brk{\cZ(i,j,l,T)}\leq\theta n\exp(-\exp(c^{2/3}\theta k)).$$
Hence, \Lem~\ref{Lemma_Chernoff} (the Chernoff bound) yields
	\begin{eqnarray}\nonumber
	\pr\brk{\cZ(i,j,l,T)>\delta\theta n/(\theta k)^4}
		&\leq&\exp\brk{-\frac{\delta\theta n}{(\theta k)^4}\ln\bcfr{\delta}{(\theta k)^4\exp(1-\exp(c^{2/3}\theta k))}}\\
		&\leq&\exp\brk{-\frac{\delta\theta n}{2(\theta k)^4}\cdot\exp(c^{2/3}\theta k)},
\nonumber		\label{eqQ21}
	\end{eqnarray}
as desired.
\qed\end{proof}	

\begin{corollary}\label{Cor_nastyZ}
With probability $1-\exp(-\delta\theta n)$ the random formula $\PHIbin^t$ has the following property.
\begin{equation}\label{eqnastyZ}
\parbox[c]{12cm}{For all $i,j,l,T$ such that $l\geq 1$, 
$j-l>k_1$,
 $0.1\theta k\leq j\leq10\theta k$ and $|T|\leq\delta\theta n$ we have $\cZ(i,j,l,T)\leq\delta\theta n/(\theta k)^4.$}
 \end{equation}
\end{corollary}
\begin{proof}
We apply the union bound.
There are at most $n\bink n{\delta n}$ ways to choose the set $T$, and no more than $n$ ways to choose $i,j,l$.
Hence, by \Lem~\ref{Lemma_nastyZ} the probability that there exist $i,j,l,T$ such that 
$\cZ(i,j,l,T)>\theta n\exp(-\exp(c^{2/3}\theta k))$ is bounded by
	\begin{eqnarray*}
	n^2\bink{n}{\delta\theta n}\exp\brk{-\frac{\delta\theta n}{(\theta k)^4}\cdot\exp(c^{2/3}\theta k)}&\leq&
		\exp\brk{o(n)+\delta\theta n(1-\ln(\delta\theta))-\frac{\delta\theta n}{(\theta k)^4}\cdot\exp(c^{2/3}\theta k)}\\
		&\leq&\exp\brk{\delta\theta n\brk{o(1)-2\ln\delta-\exp(c^{3/4}\theta k)}}
		\leq\exp\brk{-\delta\theta n},
	\end{eqnarray*}
as claimed.
\qed\end{proof}

\begin{corollary}
With probability $1-\exp(-10^{-12}\delta\theta n)$ the random formula $\PHI^t$ has the following property.
\begin{quote}
If $T\subset V_t$ has size $|T|\leq\delta\theta n$, then for all but $10^{-4}\delta\theta n$ variables $x$ we have
	$$\sum_{b\in N_{>1}(x,T)}2^{|N(b)\cap T\setminus\cbc x|-|N(b)|}<\frac{\delta}{\theta  k}
		\mbox{ and }
		\sum_{b\in N_1(x,T)}
					2^{-|N(b)|}<\rho(\theta  k)^5\delta.$$
\end{quote}
\end{corollary}
\begin{proof}
Given $T\subset V_t$ of size $|T|\leq\delta\theta n$,
let $\cV_T$ be the set of all variables $x$ with the following two properties.
\begin{enumerate}
\item[i.] For all $b\in N(x)$ we have $0.1\theta k\leq |N(b)|\leq10\theta k$.
\item[ii.] For all $1\leq i\leq j$, $1\leq l\leq j-k_1$, and $0.1\theta k\leq j\leq10\theta k$ we have $\cQ(x,i,j,l,T)\leq\gamma_{j,l}$.
\end{enumerate}
Then for all $x\in\cV_T$ we have
	\begin{eqnarray*}
	\sum_{b\in N_{>1}(x,T)}2^{|N(b)\cap T\setminus\cbc x|-|N(b)|}&=&
		\sum_{0.1k\theta\leq j\leq10 k\theta}\sum_{i=1}^j\sum_{l=2}^{j-k_1}\cQ(x,i,j,l,T)2^{l-j}\qquad\mbox{ [due to i.]}\\
		&\leq&10k\theta\sum_{0.1k\theta\leq j\leq10 k\theta}\sum_{l=2}^{j-k_1}\gamma_{j,l}2^{l-j}\qquad\qquad\qquad\mbox{[due to ii.]}\\
		&\leq&1000(k\theta)^2\delta^{1.9}<\delta/(k\theta)\qquad\qquad\qquad\mbox{[as $\delta=\exp(-ck\theta)$]}.
	\end{eqnarray*}
Similarly,
	\begin{eqnarray*}
	\sum_{b\in N_1(x,T)}2^{-|N(b)|}&\leq&
		\sum_{0.1k\theta\leq j\leq10 k\theta}\sum_{i=1}^j\cQ(x,i,j,1,T)2^{-j}\qquad\qquad\qquad\qquad\mbox{ [due to i.]}\\
		&\leq&10k\theta\sum_{0.1k\theta\leq j\leq10 k\theta}2^{-j}\gamma_{j,1}\qquad\qquad\qquad\qquad\qquad\qquad\mbox{[due to ii.]}\\
		&\leq&1000(k\theta)^3\delta\rho<\rho(k\theta)^5\delta\qquad\qquad\qquad\qquad\qquad\mbox{[as $k\theta\geq\ln(\rho)/c^2\gg1$].}\\
	\end{eqnarray*}

Thus, to complete the proof we need to show that with sufficiently high probability $\cV_T$ is sufficiently big for all $T$.
By \Lem s~\ref{Lemma_shortClauses} and~\ref{Lemma_longClauses}
with probability $1-2\exp(-10^{-11}\delta\theta n)$ the number of variables $x$ that fail to satisfy i.\ is less than $2\cdot10^{-6}\delta\theta n$.
Furthermore, by \Cor~\ref{Cor_nastyZ} and Fact~\ref{Fact_models}, with probability $\geq1-\exp(-\delta\theta n/2)$
the random formula $\PHI^t$ satisfies~(\ref{eqnastyZ}).
In this case, for all $T$ the number of variables that fail to satisfy ii.\ is bounded by $\delta\theta n/(k\theta)^4<10^{-5}\delta\theta n$.
Thus, with probability $\geq1-\exp(-10^{-12}\delta\theta n)$ we have $\abs{\cV_T}>\theta n(1-10^{-4}\delta)$ for all $T$, as desired.
\qed\end{proof}
	
For a set $T\subset V_t$ and numbers $i\leq j$ we let
$\cN_+(x,i,j,T)$ be the number of clauses $b\in N(x)$
in $\PHIbin^t$ such that $|N(b)|=j$,
the $i$th literal of $b$ is $x$ and $|N(b)\cap T\setminus x|\leq1$.
Similarly, we let $\cN_-(x,i,j,T)$ be the number of 
$b\in N(x)$ such that $|N(b)|=j$,
the $i$th literal of $b$ is $\neg x$ and $|N(b)\cap T\setminus x|\leq1$.
Let $\cB(i,j,T)$ be the set of variables $x\in V_t$ such that
	$$\abs{\cN_+(x,j,l)-\cN_-(x,j,l)}>2^j\delta(\theta k)^{-3}.$$

\begin{lemma}\label{Lemma_cancel}
Let $T\subset V_t$ be a set of size $|T|\leq\delta\theta n$.
Let $i,j$ be such that $i\leq j$ and $0.1\theta k\leq j\leq10\theta k$.
Then in $\PHIbin^t$ we have
	$\pr\brk{\cB(i,j,T)>\delta\theta n/(\theta k)^3}
		\leq\exp\brk{-\delta\theta n\exp(\theta k/22)}.$
\end{lemma}
\begin{proof}
Let $x\in V_t$.
In the random formula $\PHIbin^t$
we have
	$$\Erw\brk{\cN_+(x,i,j,T)+\cN_-(x,i,j,T)}\leq\mu_j\leq2^j\rho\qquad\mbox{(with $\mu_j$ as in (\ref{eqmuj}))}.$$
Furthermore, $\cN_+(x,i,j,T)$, $\cN_-(x,i,j,T)$ are binomially distributed with identical means,
because in $\PHIbin$ each literal is positive/negative with probability $\frac12$.
Hence,  for $j\geq0.1\theta k$ \Lem~\ref{Lemma_Chernoff} (the Chernoff bound) yields
	\begin{eqnarray}\nonumber
	\pr\brk{\abs{\cN_+(x,j,l)-\cN_-(x,j,l)}>2^j\delta(\theta k)^{-3}}&\leq&
		\exp\brk{-\frac{(2^j\delta(\theta k)^{-3})^2}{3(2^j\delta(\theta k)^{-3}+2^j\rho)}}\\
		&\hspace{-6cm}\leq&\hspace{-3cm}\;
			\exp\brk{-\frac{2^j\delta^2}{4(\theta k)^6\rho}}\nonumber\\
		&\hspace{-6cm}\leq&\hspace{-3cm}\;
			\exp(-\exp(\theta k/20))\quad\mbox{[as $\delta=\exp(-ck\theta)$, $j\geq0.1\theta k$].}
			\label{eqcancel}
	\end{eqnarray}
For different variables $x\in V_t$ the random variables $\cN_+(x,i,j)-\cN_-(x,i,j)$ are independent
	(because we fix the position $i$ where $x$ occurs).
Hence, $\cB(i,j,T)$ is a binomial random variable, and~(\ref{eqcancel}) yields
	$$\Erw\brk{\cB(i,j,T)}\leq\theta n\exp(-\exp(\theta k/20)).$$
Consequently, \Lem~\ref{Lemma_Chernoff} (the Chernoff bound) gives
	\begin{eqnarray*}
	\pr\brk{\cB(i,j,T)>\theta\delta n/(\theta k)^3}
		&\leq&\exp\brk{-\frac{\delta\theta n}{(\theta k)^3}\ln\bcfr{\delta\theta n/(\theta k)^3}{\exp(1-\exp(\theta k/20))\theta n}}\\
		&\leq&\exp\brk{-\frac{\delta\theta n}{(\theta k)^3}\cdot\exp(\theta k/21)}\leq
			\exp\brk{-\theta\delta n\exp(\theta k/22)}
	\end{eqnarray*}
provided that $\rho\geq\rho_0$ is sufficiently large.
\qed\end{proof}

\begin{corollary}\label{Cor_cancel}
With probability $\geq1-\exp(-\delta\theta n)$ the random formula $\PHIbin^t$ has the following property.
\begin{equation}\label{eqCorcancel}
\parbox[c]{12cm}{For all $T\subset V_t$ of size $|T|\leq\delta\theta n$ and all $i,j$ such that $i\leq j$, $0.1\theta k\leq j\leq10\theta k$ we have
		$\cB(i,j,T)\leq\delta\theta n/(\theta k)^3$.}
\end{equation}
\end{corollary}
\begin{proof}
Let $i,j$ be such that $i\leq j$, $0.1\theta k\leq j\leq 10\theta k$.
By \Lem~\ref{Lemma_cancel} and the union bound, the probability that there is a set $T$ such that
	$\cB(i,j,T)>\delta\theta n/(\theta k)^3$ is bounded by
	\begin{eqnarray*}
	n\bink{\theta n}{\delta\theta n}\exp\brk{-\delta\theta n\exp(\theta k/22)}
		&\leq&\exp\brk{o(n)+\delta\theta n(1-\ln(\theta\delta)-\exp(\theta k/22))}\\
		&\le&\exp\brk{-2\delta\theta n}\qquad\mbox{[as $\delta=\exp(-ck\theta)$]}.
	\end{eqnarray*}
Since there are no more than $(10k\theta)^2$ ways to choose $i,j$, the assertion follows.
\qed\end{proof}

\begin{corollary}\label{Cor_cancel2}
With probability $\geq1-\exp(-10^{-12}\delta\theta n)$ the random formula $\PHI^t$ has the following property.
\begin{equation}\label{eqCorcancel}
\parbox[c]{12cm}{If $T\subset V_t$ has size $|T|\leq\delta\theta n$, then there are no more than $10^{-5}\delta\theta n$
	variables $x\in V_t$ such that
	$$\abs{\sum_{b\in N_{\leq1}(x,T)}\hspace{-2mm}\frac{\sign(x,b)}{2^{|N(b)|}}}>\frac{\delta}{1000}.$$}
\end{equation}
\end{corollary}
\begin{proof}
Given $T\subset V_t$, let $\cV_T$ be the set of all $x\in V_t$ with the following two properties.
\begin{enumerate}
\item[i.] For all $b\in N(x)$ we have $0.1\theta k\leq |N(b)|\leq10\theta k$.
\item[ii.] For all $1\leq i\leq j$, $0.1\theta k\leq j\leq10\theta k$ we have $\cB(i,j,T)\leq\delta\theta n/(\theta k)^3$.
\end{enumerate}
Then for all $x\in\cV_T$ we have
	\begin{eqnarray*}
	\abs{\sum_{b\in N_{\leq1}(x)}\frac{\sign(x,b)}{2^{|N(b)|}}}&=&\abs{\sum_{0.1\theta k\leq j\leq10\theta k}\sum_{i=1}^j2^{-j}(\cN_+(x,i,j,T)-\cN_-(x,i,j,T))}\\
		&\leq&\sum_{0.1\theta k\leq j\leq10\theta k}\sum_{i=1}^j2^{-j}\abs{\cN_+(x,i,j,T)-\cN_-(x,i,j,T)}\\
		&\leq&\sum_{0.1\theta k\leq j\leq10\theta k}2^{-j}\cdot2^j\delta(\theta k)^{-3}\leq100\delta/(\theta k)<10^{-3}\delta.
	\end{eqnarray*}
Furthermore, by \Lem s~\ref{Lemma_shortClauses} and~\ref{Lemma_longClauses}
with probability $\geq1-2\exp(-10^{-11}\delta\theta n)$ the number of variables $x$ that fail to satisfy i.\ is less than $2\cdot10^{-6}\delta\theta n$.
In addition, by \Cor~\ref{Cor_cancel} and Fact~\ref{Fact_models} 
with probability $\geq1-\exp(-\delta\theta n/2)$
the number of variables $x$ that satisfy ii.\ in $\PHI^t$ is bounded by $10^{-5}\delta\theta n$.
Thus, with probability $\geq1-\exp(-10^{-12}\delta\theta n)$ we have $\cV_T\geq10^{-4}\delta\theta n$ for all $T$,
as claimed.
\qed\end{proof}

\subsubsection{Establishing Q3.}
We carry the proof out in the model $\PHIseq$.
Let $0.01\leq z\leq1$ and let $T$ be a set of size $|T|=q\theta n$ with $0.01\delta\leq q\leq100\delta$.

\begin{lemma}\label{Lemma_EQSZ}
Let $S,Z>0$ be integers and let $\cE_z(T,S,Z)$ be the event that $\PHIseq^t$ contains a set $\cZ$ of $Z$ clauses with the following properties.
\begin{enumerate}
\item[i.] $S=\sum_{b\in\cZ}|N(b)|>1.009|T|/z$,
\item[ii.] For all $b\in\cZ$ we have $0.1\theta k\leq|N(b)|\leq10\theta k$.
\item[iii.] All $b\in\cZ$ satisfy $|N(b)\cap T|\geq z|N(b)|$.
\end{enumerate}
Then 
	$\pr\brk{\cE_z(T,S,Z)}\leq q^{0.99999zS}.$ 
\end{lemma}
\begin{proof}
We claim that in $\PHIseq^t$, 
	\begin{eqnarray*}
	\pr\brk{\cE_z(T,S,Z)}&\leq&\bink{m}Z\bink{kZ}S\bink{S}{zS}2^{S-kZ}\theta^S(1-\theta)^{kZ-S}q^{zS}.
	\end{eqnarray*}
Indeed, $\PHIseq^t$ is based on the random sequence $\PHIseq$ of $m$ independent clauses.
Out of these $m$ clauses we choose a subset $\cZ$ of size $Z$, inducing a $\bink mZ$ factor.
Then, out of the $kZ$ literal occurrences of the clauses in $\cZ$ we choose $S$ (leading to the $\bink{kZ}S$ factor)
whose underlying variables lie in $V_t$, which occurs with probability $\theta=|V_t|/n$ independently for each literal (inducing a $\theta^S$ factor).
Furthermore, all $kZ-S$ literals whose variables are in $V\setminus V_t$ must be negative, because otherwise
the corresponding clauses would have been eliminated from $\PHIseq^t$, and not in $V_t$;
	this explains the $2^{S-kZ}(1-\theta)^{kZ-S}$ factor. 
Finally, out of the $S$ literal occurrences in $V_t$ a total of at least $zS$ has an underlying variable from $T$ (a factor of $\bink{S}{zS}$),
which occurs with probability $q=|T|/(\theta n)$ independently (hence the $q^{zS}$ factor).

Hence, we obtain
	\begin{eqnarray}\nonumber
	\pr\brk{\cE_z(T,S,Z)}&\leq&\bink mZ2^{-kZ}\brk{2^{1/z}\cdot\frac{\eul}z\cdot q}^{zS}\cdot\bink{kZ}S\theta^S(1-\theta)^{kZ-S}\\
		&\leq&\bink mZ2^{-kZ}\brk{2^{1/z}\cdot\frac{\eul}z\cdot q}^{zS}\leq\bink mZ2^{-kZ}(Cq)^{zS}
			\label{eqmZ0}
	\end{eqnarray}
for a certain absolute constant $C>0$, because $z\geq0.01$.
Since all clause lengths are required to be between $0.1\theta k$ and $10\theta k$, we obtain $0.1S/(\theta k)\leq Z\leq10S/(\theta k)$.
Therefore,
	\begin{eqnarray}\nonumber
	\bink mZ2^{-kZ}&\leq&\bcfr{\eul m}{2^kZ}^Z\leq\bcfr{\eul \rho n}{kZ}^Z\qquad\mbox{ [as $m=2^k\rho n/k$]}\\
		&\leq&\bcfr{10\eul\rho\theta n}{S}^{Z}\nonumber\\
		&\leq&\bcfr{10\eul\rho}{1.009q}^{Z}\qquad\qquad\mbox{[as $S\geq1.009q\theta n/z\geq1.009q\theta n$ by i.]}.
	\label{eqmZ1}
	\end{eqnarray}
Since $q\leq100\delta=100\exp(-c\theta k)$ and $\theta k\geq\ln(\rho)/c^2$, we
have $1/q\geq100\rho$ for $\rho\geq\rho_0$ sufficiently large.
Hence, (\ref{eqmZ1}) yields
	\begin{equation}\label{eqmZ2}
	\bink mZ2^{-kZ}\leq q^{-2Z}\leq q^{-20S/(\theta k)}.
	\end{equation}
Plugging~(\ref{eqmZ2}) into~(\ref{eqmZ0}), we obtain for $\theta k\geq\rho_0$ large enough
and $S\geq1.009|T|/z$
	\begin{eqnarray}\label{eqmZ3}\nonumber
	\pr\brk{\cE_z(T,S,Z)}&\leq&q^{-20S/(\theta k)}\cdot(Cq)^{zS}\leq q^{0.99999zS},
	\end{eqnarray}
as claimed.
\qed\end{proof}

\begin{corollary}\label{Cor_EQSZ}
Let $\cE$ be the event that there exist a number $z\in\brk{0.01,1}$, a set $T\subset V_t$ of size $
	|T|\leq100\theta\delta n$
and $S\geq\frac{1.01}z|T|+10^{-6}\delta\theta n$, $Z>0$ such that $\cE_z(T,S,Z)$ occurs.
Then $\cE$ occurs in $\PHI^t$ with probability $\leq\exp(-10^{-7}\delta\theta n)$.
\end{corollary}
\begin{proof}
Let $z\in\brk{0.01,1}$, let $0<q\leq100\delta$, and let $S,Z>0$ be integers such that $S\geq\frac{1.01}zq\theta n+10^{-6}\delta\theta n$.
Let $\cE_z(q,S,Z)$ denote the event that there is a set $T\subset V_t$ of size $|T|=q\theta n$ such that $\cE_z(T,S,Z)$ occurs.
Then by \Lem~\ref{Lemma_EQSZ} and the union bound, in $\PHIseq^t$ we have
	\begin{eqnarray}\nonumber
	\pr\brk{\cE(q,S,Z)}&\leq&\bink{\theta n}{q\theta n}q^{0.99999zS}
		\leq\exp\brk{q\theta n(1-\ln q+1.008\ln q))+0.9\cdot10^{-6}\delta\theta n\ln q}\\
		&\leq&\exp\bc{-0.9\cdot10^{-6}\delta\theta n}\qquad\qquad\qquad\qquad\qquad\mbox{[as $q\leq100\delta<1/\eul$]}
	\label{eqmZ4}
	\end{eqnarray}
Since there are only $O(n^4)$ possible choices of $S$, $Z$, $z$ and $q$, (\ref{eqmZ4}) and Fact~\ref{Fact_models} imply the assertion.
\qed\end{proof}

\begin{corollary}\label{Cor_Q3}
With probability at least $1-\exp(-10^{-12}\delta\theta n)$, $\PHI^t$ has the following property.
\begin{quote}
Let $0.01\leq z\leq1$ and let $T\subset V_t$ have size $0.01\delta\theta n\leq|T|\leq100\delta\theta n$.
Then $$\sum_{b:|N(b)\cap T|\geq z|N(b)|}|N(b)|\leq\frac{1.01}z|T|+2\cdot10^{-5}\delta\theta n.$$
\end{quote}
\end{corollary}
\begin{proof}
\Lem s~\ref{Lemma_shortClauses} and~\ref{Lemma_longClauses} and \Cor~\ref{Cor_EQSZ} imply that with probability 
at least $1-3\exp(-10^{-11}\delta\theta n)$, $\PHI^t$ has the following properties.
\begin{enumerate}
\item[i.] $\cE$ does not occur.
\item[ii.] $\sum_{b:|N(b)|\not\in[0.1\theta k,10\theta k]}|N(b)|\leq10^{-5}\delta\theta n$.
\end{enumerate}
Assume that i.\ and ii.\ hold and let $T\subset V_t$ be a set of size $|T|\leq100\delta\theta n$.
Let $0.01\leq z\leq1$.
Let $\cN_T$ be the set of all clauses $b$ of $\PHI^t$ such that  $|N(b)\cap T|\geq z|N(b)|$ and $0.1\theta k\leq|N(b)|\leq10\theta k$.
Then i.\ implies that
	$$\sum_{b\in\cN_T}|N(b)|\leq\frac{1.009}z|T|+10^{-6}\delta\theta n.$$
Furthermore, ii.\ yields
	\begin{eqnarray*}
	\sum_{b:|N(b)\cap T|\geq z|N(b)|}|N(b)|&\leq&\sum_{b:|N(b)|\not\in[0.1\theta k,10\theta k]}|N(b)|+\sum_{b\in\cN_T}|N(b)|\\
		&\leq&1.009|T|/z+2\cdot10^{-5}\delta\theta n,
	\end{eqnarray*}
as desired.
\qed\end{proof}

\subsubsection{Establishing Q4.}
We are going to work with the probability distribution $\PHIseq$ (sequence of $m$ independent clauses).
Let $\cM$ be the set of all indices $l\in\brk m$ such that the $l$th clause $\PHIseq(l)$
does not contain any of the variables $x_1,\ldots,x_{t}$ positively.
In this case, $\PHIseq(l)$
is still present in the decimated formula $\PHIseq^t$ (with all occurrences of $\neg x_1,\ldots,\neg x_t$ eliminated, of course).
For each $l\in\cM$ let $\cL(l)$ be the number of literals in $\PHIseq(l)$ whose underlying variable is in $V_t$.
We may assume without loss of generality that for any $l\in\cM$ the $\cL(l)$ `leftmost'
literals $\PHIseq(l,i)$, $1\leq i\leq \cL(l)$, are the ones with an underlying variable from $V_t$.

Let $T\subset V_t$.
Analyzing the operator $\Lambda_T$ directly is a little awkward.
Therefore, we will decompose $\Lambda_T$ into a sum of several operators that are easier to investigate.
For any $0.1\theta k\leq L\leq10\theta k$, $1\leq i<j\leq L$, $l\in\cM$,
and any distinct $x,y\in V_t$ we define
	$$m_{xy}(i,j,l,L)=\left\{\begin{array}{cl}
		1&\mbox{ if }\cL(l)=L\wedge[(\PHIseq\bc{l,i}=x\wedge \PHIseq\bc{l,j}=y)\\
			&\qquad\qquad\qquad\qquad\qquad\vee(\PHIseq\bc{l,i}=\neg x\wedge\PHIseq\bc{l,j}=\neg y)],\\
		-1&\mbox{ if }\cL(l)=L\wedge[(\PHIseq\bc{l,i}=x\wedge\PHIseq\bc{l,j}=\neg y)\\
			&\qquad\qquad\qquad\qquad\qquad\vee(\PHIseq\bc{l,i}=\neg x\wedge\PHIseq\bc{l,j}=y)],\\
		0&\mbox{ otherwise},
		\end{array}\right.$$
while we let $m_{xx}(i,j,l,L)=0$.
Moreover, for $x,y\in V_t$ we let
	$$m_{xy}(i,j,L)=\sum_{l\in\cM}m_{xy}(i,j,l,L).$$
For a variable $x\in V_t$ we let $\cN(x,T)$ be the set of all $l\in\cM$ such
that $0.1\theta k\leq\cL(l)\leq 10\theta k$ and the clause $\PHIseq(l)$ contains
at most one literal whose underlying variable is in $T\setminus x$.
Moreover, for $l\in\cM$ let $\cN(x,l)$ be the set of all variables $y\in V_t\setminus\cbc x$ that occur in clause $\PHIseq(l)$ (either positively or negatively).
We are going to  analyze the  operators
	$$\Lambda^{ijL}_T:\RR^{V_t}\rightarrow\RR^{V_t},\quad
		\Gamma=(\Gamma_y)_{y\in V_t}\mapsto
		\cbc{\sum_{l\in\cN(x,T)}\sum_{y\in\cN(x,l)}2^{-L}m_{xy}\bc{i,j,L}\Gamma_y}_{x\in V_t}.$$

\begin{lemma}\label{Cor_eqBern5}
For any 
$0.1\theta k\leq L\leq10\theta k$, $1\leq i<j\leq L$ and for any set $T\subset V_t$
we have 
	$$\pr\brk{\cutnorm{\Lambda^{ijL}_T}\leq\delta^5\theta n}\geq1-\exp(-\theta n).$$
\end{lemma}
\begin{proof}
The proof is based on Fact~\ref{Lemma_cutnorm}.
Fix two sets $A,B\subset V_t$.
For each $l\in\cM$ and any $x,y\in V_t$ the two $0/1$ random variables 
	$$\sum_{(x,y)\in A\times B}\max\cbc{m_{xy}(i,j,l,L),0},\quad\sum_{(x,y)\in A\times B}\max\cbc{-m_{xy}(i,j,l,L),0}$$
are identically distributed,
because the clause $\PHIseq(l)$ is chosen uniformly at random.
In effect, 
the two random variables
	\begin{eqnarray*}
	\mu(A,B)&=&\sum_{l\in\cM}\sum_{(x,y)\in A\times B}\vecone_{l\in\cN(x,T)}\max\cbc{m_{xy}(i,j,l),0},\\
	\nu(A,B)&=&\sum_{l\in\cM}\sum_{(x,y)\in A\times B}\vecone_{l\in\cN(x,T)}\max\cbc{-m_{xy}(i,j,l),0}
	\end{eqnarray*}
are identically distributed.
Furthermore,
both $\mu(A,B)$ and $\nu(A,B)$ are sums of independent Bernoulli variables,
because the clauses $(\PHIseq(l))_{l\in\brk m}$ are mutually independent.

We are need to estimate the mean $\Erw(\mu(A,B))=\Erw(\nu(A,B))$.
As each of the clauses $\PHIseq(l)$ is chosen uniformly,
for each $l\in\brk m$ we have
	$$\pr\brk{l\in\cM\wedge\cL(l)=L}=\bink{k}L\theta^L(1-\theta)^{k-L}2^{L-k}.$$
Therefore, 
	\begin{eqnarray}\nonumber
	\Erw\brk{\mu(A,B)+\nu(A,B)}&\leq&
		m\bink{k}L\theta^L(1-\theta)^{k-L}2^{L-k}\\
	&=&\frac{2^L\rho\theta n}L\bink{k-1}{L-1}\theta^{L-1}(1-\theta)^{k-L}\qquad\mbox{[as $m=2^k\rho/k$]}\nonumber\\
	&\leq&\frac{2^L\rho\theta n}L.
		\label{eqBern1}
	\end{eqnarray}
Hence, \Lem~\ref{Lemma_Chernoff} (the Chernoff bound) yields
	\begin{eqnarray}\nonumber
	\pr\brk{\abs{\mu(A,B)-\Erw(\mu(A,B))}>10\sqrt{2^L\rho/L}\cdot\theta n}&=&
		\pr\brk{\abs{\nu(A,B)-\Erw(\nu(A,B))}>10\sqrt{2^L\rho/L}\cdot\theta n}\\
		&\leq&16^{-\theta n}.\nonumber
	\end{eqnarray}
Hence, by the union bound 
	\begin{eqnarray}\nonumber
	\pr\brk{\exists A,B\subset V_t:\max\cbc{\abs{\mu(A,B)-\Erw(\mu(A,B))},\abs{\nu(A,B)-\Erw(\nu(A,B))}}>10\sqrt{2^L\rho/L}\cdot\theta n}\\
		&\hspace{-10cm}\leq&\hspace{-5cm}\;
		2\cdot4^{\theta n}\cdot 16^{-\theta n}\leq\exp(-\theta n).\label{eqBern2}\nonumber
	\end{eqnarray}
Thus, with probability $\geq1-\exp(-\theta n)$ we have
	\begin{eqnarray*}
	\scal{\Lambda_T^{ijL}\vecone_B}{\vecone_A}&=&2^{-L}(\mu(A,B)-\nu(A,B))\\
		&\leq&2^{-L}\bc{\abs{\mu(A,B)-\Erw\brk{\mu(A,B)}}+\abs{\nu(A,B)-\Erw\brk{\nu(A,B)}}}\\
		&\leq&\theta n\cdot20\sqrt{\frac{\rho}{L2^L}}\leq0.01\delta^5\theta n\qquad\mbox{[as $L\geq0.1k\theta$, $\theta k\geq\ln(\rho)/c^2$,
			 and $\delta=\exp(-ck\theta)$]}.
	\end{eqnarray*}
Finally, the assertion follows from Fact~\ref{Lemma_cutnorm}.
\qed\end{proof}

\begin{corollary}\label{Cor_matrixBound}
With probability at least $1-\exp(-0.1\theta n)$ the random formula $\PHIseq^t$ has the following property.
\begin{quote}
Let $T\subset V_t$ and let
	$$\Lambda_T'=\sum_{0.1\theta k\leq L\leq10\theta k}\sum_{j=1}^L\sum_{1\leq i<j}\Lambda_T^{ijL}.$$
Then $\cutnorm{\Lambda_T'}\leq\delta^{4.9}\theta n$.
\end{quote}
\end{corollary}
\begin{proof}
By \Lem~\ref{Cor_eqBern5} and the union bound, we have
	\begin{eqnarray*}
	\pr\brk{\exists T,i,j,L:\cutnorm{\Lambda^{ijL}_T}>\delta^5\theta n}&\leq&(10\theta k)^32^{\theta n}\cdot\exp(-\theta n)
		\leq\exp(-0.1\theta n).
	\end{eqnarray*}
Furthermore, if $\cutnorm{\Lambda^{ijL}_T}\leq\delta^5\theta n$ for all $i,j,L$, then by the triangle inequality
	$$\cutnorm{\Lambda_T'}\leq(10\theta k)^3\delta^5\theta n\leq\delta^{4.9}\theta n\qquad\mbox{[as $\delta=\exp(-ck\theta)]$},$$
as claimed.
\qed\end{proof}

To complete the proof of {\bf Q4},
we observe that 
for $(x,y)\in V_t\times V_t$ the $(x,y)$ entries of the matrices
$\Lambda_T$ and $\Lambda_T'$ 
 differ only if either $x$ or $y$ occurs in a redundant clause.
Consequently, {\bf Q0} ensures that
	$\cutnorm{\Lambda_T'-\Lambda_T}=o(n).$
Therefore, Fact~\ref{Fact_models} and \Cor~\ref{Cor_matrixBound} imply
$\PHI^t$ satisfies {\bf Q4} with probability at least $1-\exp(-11\Delta_t)$.


\begin{thebibliography}{99}


\bibitem{AchHandbook}
D.~Achlioptas:
Random satisfiability.
in: A.~Biere, M.~Heule, H.~van Maaren, T.~Walsh (eds.):
Handbook of Satisfiability, IOS Press (2009), 245--270.


\bibitem{AchBeameMolloy}
D.~Achlioptas, P.~Beame, M.~Molloy:
Exponential bounds for DPLL below the satisfiability threshold.
Proc.~15th SODA (2004) 139--140.

\bibitem{AchACO}
D.~Achlioptas, A.~Coja-Oghlan: 
Algorithmic barriers from phase transitions.
Proc.~49th FOCS (2008) 793--802.

\bibitem{nae}
D.~Achlioptas, C.~Moore:
Random $k$-SAT: two moments suffice to cross a sharp threshold.
SIAM Journal on Computing {\bf 36} (2006) 740--762.

\bibitem{ANP}
D.~Achlioptas, A.~Naor, Y.~Peres:
Rigorous location of phase transitions in hard optimization problems. 
Nature \textbf{435} (2005) 759--764.

\bibitem{yuval}
D.~Achlioptas, Y.~Peres:
The threshold for random $k$-SAT is $2^k \ln 2 - O(k)$.
Journal of the AMS \textbf{17} (2004) 947--973.

\bibitem{AchSorkin}
D.~Achlioptas, G.~Sorkin: Optimal myopic algorithms for random 3-SAT. 
Proc.\ 41st FOCS (2000) 590--600.


\bibitem{Lenka}
J.~Ardelius, L.~Zdeborova:
Exhaustive enumeration unveils clustering and freezing in random 3-SAT.
Phys.\ Rev.\ E {\bf78} (2008) 040101(R).

\bibitem{BSS}
M.~Bayati, D.~Shah, M.~Sharma:
Max-product for maximum weight matching: convergence, correctness, and LP duality.
IEEE Transaction on Information Theory {\bf 54} (2008) 1241--1251.

\bibitem{Bethe}
H.~Bethe: Statistical theory of superlattices.
	Proc.\ R.\ Soc.\ London A {\bf150} (1935) 552--558.


\bibitem{BMZ}
A.~Braunstein, M.~M\'ezard, R.~Zecchina:
Survey propagation: an algorithm for satisfiability.
Random Structures and Algorithms {\bf 27} (2005)  201--226.

\bibitem{PureLiteral}
A.~Broder, A.~Frieze, E.~Upfal:
On the satisfiability and maximum satisfiability of random 3-{CNF} formulas.
Proc.\ 4th SODA (1993) 322--330.

\bibitem{ChaoFranco2} M.-T.~Chao, J.~Franco:
Probabilistic analysis of a generalization of the unit-clause literal selection heuristic for the $k$-satisfiability problem.
Inform.\ Sci.\ \textbf{51} (1990) 289--314.

\bibitem{Cheeseman}
P.~Cheeseman, B.~Kanefsky, W.~Taylor:
Where the really hard problems are.
Proc.\ IJCAI (1991) 331--337.

\bibitem{mick} V.~Chv\'{a}tal, B.~Reed:
Mick gets some (the odds are on his side). 
Proc.~33th FOCS (1992) 620--627.

\bibitem{BetterAlg}
A.~Coja-Oghlan:
A better algorithm for random $k$-SAT.
SIAM J.\ Computing {\bf 39} (2010) 2823--2864.

\bibitem{BP3col}
A.~Coja-Oghlan, E.~Mossel, D.~Vilenchik:
 A spectral approach to analyzing Belief Propagation for 3-coloring.
 Combinatorics, Probability and Computing {\bf 18} (2009) 881--912.

\bibitem{Angelica}
A.~Coja-Oghlan, A.~Pachon-Pinzon:
The decimation process in random $k$-SAT.
SIAM J.\ Discrete Mathematics {\bf 26} (2012) 1471--1509.

\bibitem{Kosta}
A.~Coja-Oghlan, K.~Panagiotou:
Going after the $k$-SAT threshold.
Proc.\ 45th STOC (2013), to appear.


\bibitem{WP}
U.~Feige, E.~Mossel, D.~Vilenchik:
Complete convergence of message passing algorithms for some satisfiability problems.
Proc.\ 10th RANDOM (2006) 339--350.

\bibitem{FrSu}
A.~Frieze, S.~Suen:
Analysis of two simple heuristics on a random instance of $k$-SAT.
Journal of Algorithms \textbf{20} (1996) 312--355.

\bibitem{Gallager}
R.~G.~Gallager: Low-density parity check codes.
	MIT Press (1963).

\bibitem{GSW}
D.~Gamarnik, D.~Shah,Y.~Wei:
Belief Propagation for min-cost network flow: convergence \&\ correctness.
Operations Research {\bf 60} (2012) 410--428.


\bibitem{GomesSelman}
C.~P.\ Gomes, Bart Selman:
Satisfied with physics.
Science {\bf 297} (2002) 784-785.

\bibitem{HajiSorkin}
M.~Hajiaghayi, G.~Sorkin:
The satisfiability threshold of random 3-SAT is at least $3.52$.
IBM Research Report RC22942 (2003).

\bibitem{JLR}
S.~Janson, T.~{\L}uczak, A.~Ruci\'nski: Random Graphs, Wiley  2000.

\bibitem{KKL}
A.\ Kaporis, L.\ Kirousis, E.\ Lalas:
The probabilistic analysis of a greedy satisfiability algorithm.
Random Structures and Algorithms {\bf28} (2006) 444--480.

\bibitem{KirkpatrickSelman}
S.~Kirkpatrick, B.~Selman:
Critical behavior in the satisfiability of random Boolean expressions. Science {\bf264} (1994) 1297--1301.

\bibitem{KKKS}
L.~Kirousis, E.~Kranakis, D.~Krizanc, Y.~Stamatiou:
Approximating the unsatisfiability threshold of random formulas.
Random Structures Algorithms {\bf 12} (1998) 253--269.

\bibitem{Kroc}
L.~Kroc, A.~Sabharwal, B.~Selman:
Message-passing and local heuristics as decimation strategies for satisfiability.
Proc\ 24th SAC (2009) 1408--1414.

\bibitem{pnas}
F.~Krzakala, A.~Montanari, F.~Ricci-Tersenghi, G.~Semerjian, L.~Zdeborova:
Gibbs states and the set of solutions of random constraint satisfaction problems.
Proc.~National Academy of Sciences {\bf104} (2007) 10318--10323.

\bibitem{MitchellSelmanLevesque}
D.~Mitchell, B.~Selman, H.~Levesque:
Hard and easy distribution of SAT problems.
Proc.\ 10th AAAI (1992) 459--465. 



\bibitem{MMW}
E.~Maneva, E.~Mossel, M.~Wainwright: A new look at survey propagation and its generalizations. J.\ ACM {\bf 54} (2007)

\bibitem{Mertens}
S.~Mertens, M.~M\'ezard, R.~Zecchina:
Threshold values of random $K$-SAT from the cavity method.
Random Struct.\ Alg.\ {\bf28} (2006) 340--373.


\bibitem{MPZ}
M.~M\'ezard, G.~Parisi, R.~Zecchina:
Analytic and algorithmic solution of random satisfiability problems.
Science {\bf 297} (2002) 812--815.

\bibitem{MM}
A.~Montanari, M.~M\'ezard:
Information, physics and computation.
Oxford university press, 2009.

\bibitem{MU}
A.~Montanari, R.~Urbanke:
Modern coding theory: the statistical mechanics and computer science point of view.
Lecture notes, 2007.

\bibitem{Prasad}
A.~Montanari, R.~Restrepo, P.~Tetali:
Reconstruction and clustering in random constraint satisfaction problems.
Preprint (2009).

\bibitem{Allerton}
A.~Montanari, F.~Ricci-Tersenghi, G.~Semerjian:
Solving constraint satisfaction problems through Belief Propa\-gation-guided decimation.
Proc.\ 45th Allerton (2007).

\bibitem{MontanariShah}
A.~Montanari, D.~Shah:
Counting good truth assignments of random k-SAT formulae.
Proc.\ 18th SODA (2007) 1255--1264.

\bibitem{Pearl}
J.~Pearl:
Probabilistic reasoning in intelligent systems: networks of  plausible inference.
Morgan Kaufmann Publishers Inc., San Francisco, CA, USA, 1988.

\bibitem{RTS}
F.~Ricci-Tersenghi,  G.~Semerjian:
On the cavity method for decimated random constraint satisfaction problems
and the analysis of belief propagation guided decimation algorithms.
J.\ Stat.\ Mech.\ (2009) P09001.

\bibitem{RSU01}
T.~Richardson, A.~Shokrollahi, R.~Urbanke:
Design of capacity-approaching irregular low-density parity check codes.
IEEE Trans.\ Info.\ Theory {\bf47} (2001) 619--637.


\end{thebibliography}
\end{document}